\documentclass[11pt, a4paper]{article}

\usepackage{amsmath, amssymb, amsthm, mathrsfs}
\usepackage{xcolor}
\usepackage{graphicx}
\usepackage{tikz}
\usetikzlibrary{calc}
\usepackage{booktabs, caption}
\usepackage{enumerate}
\usepackage{authblk}
\usepackage[noadjust]{cite}
\usepackage{CJK}
\usepackage[normalem]{ulem}
\usepackage{indentfirst}

\usepackage[colorlinks, linkcolor=blue, anchorcolor=green, citecolor=magenta]{hyperref}
\usepackage{cleveref}

\newtheoremstyle{mystyle}
{3pt}
{3pt}
{\itshape}
{}
{\bfseries}
{.}
{0.5em}
{\thmname{#1}\thmnumber{ #2}\thmnote{ {\normalfont(#3)}}}

\theoremstyle{mystyle}
\newtheorem{Theorem}{Theorem}[section]

\newtheorem{Lemma}[Theorem]{Lemma}

\newtheorem{Corollary}[Theorem]{Corollary}
\newtheorem{Proposition}[Theorem]{Proposition}

\theoremstyle{remark}

\theoremstyle{definition}
\newtheorem{algorithm}{Algorithm}
\theoremstyle{plain}

\numberwithin{equation}{section}

\begin{document}

\setcounter{page}{1}
\renewcommand{\thefootnote}{}
\newcommand{\remark}{\vspace{2ex}\noindent{\bf Remark.\quad}}
\renewcommand{\abovewithdelims}[2]{%
	\genfrac{[}{]}{0pt}{}{#1}{#2}}


\def\qed{\hfill$\Box$\vspace{11pt}}

\title {\bf   The complete nontrivial-intersection theorem
	for vector spaces}

\author{Dehai Liu\thanks{E-mail: \texttt{liudehai@mail.bnu.edu.cn}}\   \textsuperscript{a}}
\author{Benjian Lv\thanks{E-mail: \texttt{bjlv@bnu.edu.cn}}\   \textsuperscript{a}}
\author{Kaishun Wang\thanks{ E-mail: \texttt{wangks@bnu.edu.cn}}\ \textsuperscript{a}}
\author{Jie Wen\thanks{Corresponding author. E-mail: \texttt{jwen@mail.bnu.edu.cn}}\ \textsuperscript{a}}
\affil{ \textsuperscript{a} Laboratory of Mathematics and Complex Systems (Ministry of Education), School of
	Mathematical Sciences, Beijing Normal University, Beijing 100875, China}

\openup 0.5\jot
\date{}

\maketitle

\begin{abstract}
	
	 	A family of $k$-subspaces of an $n$-dimensional vector space over a finite field is $t$-intersecting if any two members intersect in a subspace of dimension at least $t$. A $t$-intersecting
	 	family is nontrivial if no $t$-subspace is contained in all its members. In this paper, we determine the largest nontrivial $t$-intersecting families for $k\geq t+2$ and $n\geq 2k+1$. As a result,  the complete nontrivial-intersection theorem for vector spaces is established.

	\vspace{2mm}
	\noindent{\bf Keywords:}\ $t$-intersecting family;\ Vector space;\ Complete nontrivial-intersection theorem
	
	\
	
	\noindent{\bf AMS classification:}\ 05D05,\ 05A30
	
\end{abstract}

\section{Introduction}

The celebrated Erd\H{o}s--Ko--Rado theorem
\cite{Erdos-Ko-Rado-1961} initiated the study of intersection problems
for finite sets. For sufficiently large $n$,
it determines the largest $t$-intersecting families of
$k$-subsets of an $n$-element set, where \emph{$t$-intersecting}
means that any two members have at least $t$ common elements.
Frankl \cite{Frankl-1976} and Wilson \cite{Wilson-1984} determined the
sharp threshold for this result. The problem for all parameters was
settled by Ahlswede and Khachatrian \cite{Ahlswede-Khachatrian-1997}
in their famous complete intersection theorem.

Another longstanding and active direction of research concerns the problem of  determining the largest $t$-intersecting families  with
fewer than $t$ elements common to all members. Such a family is said to be \emph{nontrivial}. 
The classical Hilton--Milner theorem \cite{Hilton-Milner-1967}
answered this question for $t=1$. A significant step was taken by
Frankl \cite{Frankl-1978}, who determined the largest nontrivial
$t$-intersecting families for $t\geq1$ and sufficiently large $n$.
Ahlswede and Khachatrian \cite{Ahlswede-Khachatrian-1996} subsequently
settled this problem in full with their complete nontrivial-intersection
theorem.

Intersection problems form a central part of extremal combinatorics.
They arise naturally in many combinatorial structures and have led
to many important results
\cite{Frankl-Tokushige-2016}.
Finite vector spaces provide a particularly well-studied setting, in which
many classical extremal problems for sets have natural analogues. Let $V$ be an $n$-dimensional vector space over the finite field $\mathbb F_q$, and write
${V\brack k}$ for the family of all $k$-subspaces of $V$.   A family $\mathcal F\subseteq{V\brack k}$ is \emph{$t$-intersecting} if
$\dim(F\cap G)\ge t$ for all $F,G\in\mathcal F$.   For $n\leq 2k-t$, the whole family
${V\brack k}$ is $t$-intersecting. We therefore turn to
$n\geq 2k-t+1$, and  present the complete Erd\H{o}s--Ko--Rado theorem for vector spaces.

\begin{Theorem}[{\cite{Hsieh-1975,Frankl-Wilson-1986,Tanaka-2006}}]
	\label{ekr}
	Let $k\geq t+1$ and $n\geq 2k-t+1$  be positive integers, and $V$ be an $n$-dimensional vector space over a finite field. Suppose that 
	$\mathcal F\subseteq{V\brack k}$ is a $t$-intersecting family of maximum size. The following hold.
	\begin{itemize}
		\item[\normalfont(i)] If $ n\leq 2k-1$, then $\mathcal{F}={Y\brack k}$ for some $Y\in{V\brack 2k-t}$.
		\item[\normalfont(ii)] If $n\geq 2k$, then  $\mathcal{F}$ consists of all $k$-subspaces
		containing a fixed $t$-subspace, or $n=2k$ and $\mathcal{F}={Y\brack k}$ for some $Y\in{V\brack 2k-t}$.
	\end{itemize}
\end{Theorem}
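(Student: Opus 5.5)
The plan follows the route of Hsieh, Frankl--Wilson and Tanaka: an algebraic size bound, then a characterization of the equality cases. The upper bound comes from the Delsarte linear programming (equivalently, Hoffman/ratio-type weighted) bound in the Grassmann association scheme $J_q(n,k)$. A family is $t$-intersecting exactly when it avoids the relations $i>k-t$, where relation $i$ means $\dim(F\cap G)=k-i$. Write the eigenvalues $P_j(i)$ of the scheme as $q$-Eberlein polynomials, and let $x=(x_0,\dots,x_k)$ be the inner distribution of $\mathcal F$. The Delsarte inequalities $\sum_i x_i Q_j(i)\ge 0$ then bound $|\mathcal F|=\sum_i x_i$. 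Following Frankl--Wilson, I would exhibit an explicit feasible dual solution, that is, a matrix in the Bose--Mesner algebra supported on relations $0,\dots,k-t$ with suitable eigenvalue signs. This yields
\[
|\mathcal F|\le \max\left\{{n-t\brack k-t},\ {2k-t\brack k}\right\}.
\]
The first term is attained by a $t$-star (all $k$-subspaces through a fixed $t$-subspace) and the second by ${Y\brack k}$ with $\dim Y=2k-t$. A direct $q$-binomial comparison shows which term is larger: ${2k-t\brack k}>{n-t\brack k-t}$ for $n\le 2k-1$, equality at $n=2k$, and ${n-t\brack k-t}$ larger for $n\ge 2k+1$. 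This explains the split into (i) and (ii).

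For the uniqueness statements I would use the equality case of the LP. When the bound is tight, complementary slackness forces the characteristic vector $\chi_{\mathcal F}$ to lie in a prescribed sum of eigenspaces, namely $V_0\oplus\cdots\oplus V_t$ in the star regime and the dual-type analogue in the ${Y\brack k}$ regime. Moreover $x_i=0$ for every relation on which the dual matrix has a strictly positive entry. Tanaka's framework of $t$-designs and width/dual width in $Q$-polynomial schemes turns this into combinatorics. In the star regime, equality implies that $\mathcal F$ has width $k-t$ and dual width $t$. The width--dual width theorem then gives that $\mathcal F$ is, up to the natural identifications, either $\{F:T\subseteq F\}$ for a $t$-subspace $T$ or ${Y\brack k}$ for a $(2k-t)$-subspace $Y$. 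The size computation then picks out the right one in each range of $n$. At $n=2k$ the two sizes agree, and this gives both alternatives in (ii).

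I expect the main obstacle to be the equality/characterization step, in particular for $n$ close to $2k$. Outside these values the LP dual has slack, and a Hilton--Milner-type stability estimate would also finish the argument. First I would try to derive the full degree/dual-degree constraints directly from complementary slackness. Failing that, I would combine a shifting-free argument with a counting bound. Fix a member $F$ and take a member $G$ with $\dim(F\cap G)=t$. If $\mathcal F$ is not contained in a star or in some ${Y\brack k}$, then counting $k$-subspaces that meet both $F$ and $G$ in at least $t$ dimensions gives $|\mathcal F|$ strictly below the extremal value. Constructing the explicit dual feasible matrix and checking the sign conditions on the $q$-Eberlein values is routine but computational, so I will only indicate it.
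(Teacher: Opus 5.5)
The paper does not prove this theorem; it quotes it from Hsieh, Frankl--Wilson and Tanaka. Your outline follows the route of those sources: Frankl--Wilson's algebraic bound, equality forcing dual width at most $t$, then $w+w^*\ge d$ (Brouwer--Godsil--Koolen--Martin) and Tanaka's classification of subsets with $w+w^*=d$. However, the one step you make explicit is stated backwards, and both substantive steps are deferred.

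\textbf{The bound.} A certificate supported on the relations $0,\dots,k-t$ cannot work. These are exactly the relations that can occur inside $\mathcal F$, so $\chi_{\mathcal F}^{T}M\chi_{\mathcal F}$ picks up every pair of members and bounds nothing. In fact a $t$-star realizes every relation $0,\dots,k-t$, so complementary slackness forces any tight certificate to \emph{vanish} on relations $1,\dots,k-t$. What you need is a matrix in the Bose--Mesner algebra supported on the diagonal and the non-$t$-intersecting relations $k-t+1,\dots,k$, whose least eigenvalue is attained on $V_1\oplus\cdots\oplus V_t$. In Delsarte's dual LP this is a nonnegative combination of the primitive idempotents, with positive coefficient on $E_0$, whose entries are nonpositive on relations $1,\dots,k-t$. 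Constructing this matrix and computing its spectrum is the whole content of Frankl--Wilson's paper. Calling it routine leaves the bound unproved.

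\textbf{Uniqueness.} This step also rests on things you do not supply. Complementary slackness gives $\chi_{\mathcal F}\in V_0\oplus\cdots\oplus V_t$ only if the certificate is strictly slack on every $V_j$ with $j>t$, for all $n\ge 2k$ including $n=2k$. This is a genuine check: in Wilson's set analogue it fails at $n=(t+1)(k-t+1)$, exactly where new extremal families appear. Granting it, $w\le k-t$, $w^*\le t$ and $w+w^*\ge k$ make $\mathcal F$ a descendent. Tanaka's theorem then gives a star or, only for $n=2k$, ${Y\brack k}$ with $\dim Y=2k-t$.

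\textbf{The range $n<2k$.} Here your ``star regime'' reasoning does not apply as written, since $J_q(n,k)$ then has diameter $n-k$, not $k$. Make the ``dual-type analogue'' precise by duality. Because $\dim(F^{\perp}\cap G^{\perp})=n-2k+\dim(F\cap G)$, the map $F\mapsto F^{\perp}$ sends $\mathcal F$ to an $(n-2k+t)$-intersecting family of $(n-k)$-subspaces with $n\ge 2(n-k)$. It also sends ${Y\brack k}$ to the star of the $(n-2k+t)$-subspace $Y^{\perp}$. Thus (i) and the second alternative in (ii) are the star case of the dual problem, and only $n\ge 2k$ needs the algebraic argument.

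\textbf{The fallback.} Your counting argument is not an argument. The claim that every family lying in neither class is strictly smaller is the uniqueness statement itself, and you give no mechanism that makes such a count work near $n=2k$.
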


A $t$-intersecting family $\mathcal{F}\subseteq {V\brack k}$ is called \emph{trivial} if all its
members contain a common $t$-subspace, and \emph{nontrivial} otherwise.
For $n\geq 2k+1$, all extremal families in
Theorem \ref{ekr} are trivial. In analogy with the
Hilton--Milner theorem for sets, it is natural to ask for the structure of maximum-sized nontrivial $t$-intersecting families.
For $2k-t+1\leq n\leq 2k$, the theorem above  already shows that these
families consist of all $k$-subspaces of a fixed
$(2k-t)$-subspace. Moreover, such families are precisely the maximal
nontrivial $t$-intersecting families when $k=t+1$; see
\cite[Remark (ii), Section 9.3]{BCN-1989}. Hence, the natural range for this problem is $k\geq t+2$ and 
$n\geq 2k+1$.

Blokhuis et al. \cite{Blokhuis-etal-2010} proved a vector space version of the Hilton--Milner theorem, determining the largest nontrivial $1$-intersecting families for $n\geq2k+1$ and $(n,q)\neq(2k+1,2)$. For general $t$, an advance was made by Cao, Lv, Wang and Zhou
\cite{Cao-Lv-Wang-Zhou-2022}, who classified large maximal
nontrivial $t$-intersecting families and determined all families
of maximum size for $n\ge2k+t+\min\{4,2t\}$. Similar results were independently obtained by D'haeseleer \cite{Dhaeseleer-2023}. In 2023, Wang, Xu and Zhang \cite{Wang-Xu-Zhang-2023} established a vector space version of the Kruskal--Katona theorem. As an application, they completed the case $t=1$ for $n\ge2k+1$. The current best bound \cite{WXY-2024-220, Wang-Yang-2024},
obtained using the approach in \cite{Cao-Lv-Wang-Zhou-2022},
is $n\geq2k+1$ with $(n,q)\neq(2k+1,2)$.

The  remaining case
$t\geq 2$, $n=2k+1$ and $q=2$ is challenging. In this paper, we determine the largest nontrivial $t$-intersecting families   for all $k\geq t+2$ and $n\geq 2k+1$, with a unified proof. We note that  considering only the remaining case does not substantially simplify  our proof.   Before presenting our main result, we introduce the extremal nontrivial $t$-intersecting families.
Let $Z$ be a $(t+2)$-subspace, and let $X\subseteq M$ be a $t$-subspace and a $(k+1)$-subspace, respectively. Define
\begin{align*}
	\mathcal A(Z)&=
	\left\{F\in{V\brack k}:\dim(F\cap Z)\ge t+1\right\},\\
	\mathcal H(X,M)&=
	\left\{F\in{V\brack k}:X\subseteq F,\ \dim(F\cap M)\ge t+1\right\}
	\cup{M\brack k}.
\end{align*}

\begin{Theorem}\label{main}
	Let $k\geq t+2$ and $n\geq 2k+1$  be positive integers, and $V$ be an $n$-dimensional vector space over a finite field. Suppose that $\mathcal{F}\subseteq {V\brack k}$ is a nontrivial $t$-intersecting family of maximum size. The following hold.
	\begin{itemize}
		\item[\normalfont(i)] If  $k\geq 2t+2$, then $\mathcal{F}=\mathcal{H}(X, M)$ for some $M\in{V\brack k+1}$ and $X\in{M\brack t}$.  
		\item[\normalfont(ii)]   If  $k\leq 2t+1$, then $\mathcal{F}=\mathcal{A}(Z)$ for some $Z\in{V\brack t+2}$, or $(k,t)=(3,1)$ and $\mathcal{F}=\mathcal{H}(X, M)$ for some $M\in{V\brack 4}$ and $X\in{M\brack 1}$.
	\end{itemize}
\end{Theorem}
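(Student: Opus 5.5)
The plan is to follow the covering-number approach of Cao, Lv, Wang and Zhou, and to tighten each estimate until it works at $n=2k+1$ for every $q$, including $q=2$.

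\textbf{Setup.} Let $\mathcal F$ be a maximum nontrivial $t$-intersecting family. Its $t$-covering number is
\[
\tau_t(\mathcal F)=\min\{\dim T:\dim(T\cap F)\ge t \text{ for all } F\in\mathcal F\}.
\]
Since $\mathcal F$ is nontrivial, we have $t+1\le\tau_t(\mathcal F)\le k$. Both $\mathcal H(X,M)$ and $\mathcal A(Z)$ have $\tau_t=t+1$. So the proof splits into two parts: show that families with $\tau_t\ge t+2$ are strictly smaller than $\max\{|\mathcal H(X,M)|,|\mathcal A(Z)|\}$, and then classify the maximal families with $\tau_t=t+1$.

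\textbf{Case $\tau_t\ge t+2$.} For $\tau_t(\mathcal F)=s\ge t+2$, I would use the standard branching bound. Pick $F_0\in\mathcal F$. Every $F\in\mathcal F$ meets $F_0$ in dimension at least $t$. For a $t$-subspace $T$ with $\mathcal F_T:=\{F\in\mathcal F:T\subseteq F\}$, the fact that $T$ is not a $t$-cover gives some $G\in\mathcal F$ with $\dim(T\cap G)<t$. Then every member of $\mathcal F_T$ must meet $G$ in dimension at least $t$, so it contains a $(t+1)$-space $T+g$ with $g\in G$. Iterating this $s-t$ times gives
\[
|\mathcal F|\le {k\brack t}\,{k\brack 1}^{s-t}{n-s\brack k-s}.
\]
With $s\ge t+2$, each extra step costs a factor of roughly $q^{2k-n+1}\le 1$ times a constant. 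This crude bound fails at $n=2k+1$, $q=2$, so I would refine the count. Instead of bounding each branch separately, count pairs (member, chain of subspaces) and use the exact sizes $\dim(F\cap G)\ge t$ inside $F_0+G$. This should give a bound of order ${k\brack t}{k-t\brack 1}^2{n-t-2\brack k-t-2}$ times a small constant. That quantity should be compared with the explicit sizes
\[
|\mathcal A(Z)|\approx {t+2\brack 1}{n-t-1\brack k-t-1}, \qquad |\mathcal H(X,M)|\approx {k-t+1\brack 1}{n-t-1\brack k-t-1}.
\]
I would compute these exactly by inclusion–exclusion; both are linear in ${n-t-1\brack k-t-1}$.

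\textbf{Case $\tau_t=t+1$.} Let $\mathcal T$ be the family of $(t+1)$-covers. I would prove a structural lemma on the span of $\mathcal T$, analogous to the cited classification of large maximal families. Either $\mathcal T$ spans a subspace $Z$ of dimension $t+2$, which forces $\mathcal F\subseteq\mathcal A(Z)$. Or all members of $\mathcal T$ contain a common $t$-space $X$ and lie in a $(k+1)$-space $M$, which forces $\mathcal F\subseteq\mathcal H(X,M)$. If neither holds, $\mathcal F$ is covered by a few families of the form $\{F\supseteq T\}\cap\{\dim(F\cap G)\ge t\ \forall G\}$, with strictly smaller total size.

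Maximality then gives equality in the first two situations. Comparing $|\mathcal A(Z)|$ with $|\mathcal H(X,M)|$ via the leading coefficients ${t+2\brack 1}$ and ${k-t+1\brack 1}$ shows the threshold $k=2t+1$ versus $k=2t+2$. At $(k,t)=(3,1)$ the two exact counts coincide, which explains the exceptional case in (ii). At $k=2t+1$ and $k=2t+2$ the lower-order terms must be checked exactly, since the leading terms are close.

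\textbf{Main obstacle.} I expect the hardest step to be the intermediate subcase of $\tau_t=t+1$ in which the covers neither span only $t+2$ dimensions nor share a common $t$-space, together with the $\tau_t\ge t+2$ bound, at the extreme point $n=2k+1$, $q=2$. There ${n-t-1\brack k-t-1}/{n-t-2\brack k-t-2}\approx q^{k+1}$ is barely larger than the loss factors ${k\brack1}$. So I would avoid crude bounds there and instead count families via exact Gaussian-coefficient identities, for example by bounding $|\mathcal F|$ through $\sum_{T\in\mathcal T}$ with the overlaps subtracted exactly. Throughout I would use inequalities such as ${n\brack k}\le \frac{q^{k(n-k)}}{\prod(1-q^{-i})}$, with the product evaluated numerically for $q=2$.
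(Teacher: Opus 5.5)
There is a genuine gap. What is new in this theorem is entirely quantitative at $n=2k+1$. The paper notes that the Cao--Lv--Wang--Zhou covering-number method had already been pushed to every $n\geq 2k+1$ except $(n,q)=(2k+1,2)$, with $t=1$ settled separately. Your plan to ``tighten each estimate'' supplies no estimate that works in that remaining case.

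The one concrete bound you give for $\tau_t(\mathcal F)=t+2$ is of order ${k\brack t}{k-t\brack1}^2{n-t-2\brack k-t-2}$, and it is far too large. Divide it by ${k-t+1\brack1}{n-t-1\brack k-t-1}$, which is essentially $h(n,k,t)$ for $k\geq 2t+2$: at $n=2k+1$ the quotient is of order $q^{t(k-t-1)-3}$. For $q=2$ and $(n,k,t)=(13,6,2)$ your bound exceeds $m(13,6,2)$ by a factor larger than $30$. The loss sits in the first step, the choice of a $t$-subspace of a fixed member. Even starting from a minimum $(t+2)$-dimensional cover, the factor ${t+2\brack 2}$ leaves you about $q^{t-1}$ too large, because at $n=2k+1$ each further branching step gains only about $q^{-(t+1)}$. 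For $t+3\leq\tau_t(\mathcal F)\leq k$ you offer only the crude bound, and ``count pairs (member, chain)'' is not an argument.

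The case $\tau_t(\mathcal F)=t+1$ has the same problem, and your dichotomy is misstated in a way that hides it.
\begin{itemize}
\item Spanning a $(t+2)$-space $Z$ does not force $\mathcal F\subseteq\mathcal A(Z)$. A pencil of at most $q+1$ covers through one $t$-space $W\subseteq Z$ also spans $Z$, and members meeting $Z$ exactly in $W$ are not excluded. What forces the inclusion is that the covers have no common $t$-space; the paper uses $|\mathcal T_t(\mathcal F)|>{2\brack1}$.
\item If all covers contain $X$, they automatically lie in the $(k+1)$-space $X+D$ for any member $D\not\supseteq X$. So that condition cannot force $\mathcal H(X,M)$. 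One needs the covers to be all $(t+1)$-spaces between $X$ and their span $M$, with $\dim M=k+1$ exactly.
\end{itemize}
The leftover configurations (span of dimension $t+1,\dots,k$, or at most $q+1$ covers inside $Z$) are the near-extremal ones, and naive branching fails on them. With a unique $(t+1)$-cover $T$ it gives
$|\mathcal F\setminus\mathcal F[T]|\leq{t+1\brack1}{k-t+1\brack1}^2{n-t-2\brack k-t-2}$. This term alone already exceeds $h(n,k,t)$ when $q=2$, $n=2k+1$ and $k\geq 2t+2$.

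The missing idea is the peeling procedure. Start from the fingerprint $\mathcal M_t(\mathcal F)$, strip off the layer $\mathcal X_i$ of $(k-i)$-dimensional minimal covers, and re-fingerprint. This gives $\mathcal F=\bigcup_{i<N}\mathcal F[\mathcal X_i]\cup\mathcal F[\mathcal S_N]$ with $|\mathcal X_i|\leq{m_i\brack t}{k-t-i+1\brack1}^{k-t-i}$, from degree decay inside antichains of minimal covers. Each generator contributes only ${n-k+i\brack i}$ extensions, and the top layer pays no extension factor at all. The case split is then by the stopping index $N$. The final layer ($\mathcal S_N\subseteq{Z\brack t+1}$ for $N=k-t-1$, $\mathcal S_N=\{X\}$ for $N=k-t$) identifies $\mathcal A(Z)$ and $\mathcal H(X,M)$ and bounds their near-misses.

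Your closing comparison of $|\mathcal A(Z)|$ and $|\mathcal H(X,M)|$ is fine; the paper cites it from Cao--Lv--Wang--Zhou.
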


The rest of this paper is organized as follows. In Section \ref{2609123}, we introduce $t$-covers and  peeling 
procedure, and collect several important properties used throughout this paper.   Theorem \ref{main} is  proved in Section \ref{s3}.   For readability,  a number of technical estimates are proved in Appendix \ref{s4}.

\section{$t$-covers and peeling procedure} \label{2609123}

Recall that for integers $a \geq b \geq 0$, the Gaussian binomial coefficient is defined as
$$
{a\brack b}_q
=
\prod_{i=0}^{b-1}\frac{q^{a-i}-1}{q^{b-i}-1}.
$$
By convention, set ${a\brack b}_{q}=0$ if $b<0$ or $a<b$. In what follows,  write ${a\brack b}$ for short.

Let $V$ be an $n$-dimensional vector space over the finite field $\mathbb F_q$.
For  families $\mathcal{F}$ and $\mathcal{S}$ of subspaces of $V$, and for $S\leqslant V$, write 
\begin{align*}
\mathcal{F}[S]&=\left\{F\in\mathcal{F}:S\subseteq F\right\},\\
\mathcal{F}[\mathcal{S}]&=\left\{F\in\mathcal{F}: S\subseteq F\textnormal{ for some $S\in\mathcal{S}$}\right\}.
\end{align*}
For $i\geq 0$, the \emph{maximum $i$-degree} of $\mathcal{F}$ is defined to be
\begin{equation*}
\Delta_i(\mathcal{F})=\max\{|\mathcal{F}[S]|:S\leqslant V,\;\dim(S)=i\}.
\end{equation*}

 	Suppose that $\mathcal{F}$ is a family  of subspaces of dimension at least $t$. A subspace $T$ of $V$ is called a \textit{$t$-cover} of $\mathcal F$ if $\dim(T\cap F)\geq t$ for all $F\in\mathcal{F}$. If such a subspace exists, then the \textit{$t$-covering number} $\tau_t(\mathcal F)$ of $\mathcal{F}$ is the minimum  dimension of a $t$-cover. A $t$-cover is said to be 
 	\textit{minimal} if no proper subspace of it is a $t$-cover. Write    
 	the set of $\tau_{t}(\mathcal{F})$-dimensional $t$-covers of $\mathcal{F}$ as $\mathcal{T}_{t}(\mathcal{F})$.

The subsequent  three results  concern degrees
and will be used throughout the paper.

\begin{Proposition}\label{degree}
	Let $n\geq k\geq t\geq 1$. Suppose that   $\mathcal{F}$ is a  family of subspaces of $V$. For each $s\geq 0$, if there exist
	a $t$-cover $G\in{V\brack k}$ of $\mathcal{F}$ and
	a subspace $S\in{V\brack s}$ with
	$|\mathcal{F}[S]|=\Delta_s(\mathcal{F})$  such that
	$\dim(S\cap G)<t$, then
  $\Delta_{s}(\mathcal{F})\leq \Delta_{s+1}(\mathcal{F}) {k-t+1\brack 1}$. 
\end{Proposition}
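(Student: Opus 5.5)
Fix $S\in{V\brack s}$ with $|\mathcal{F}[S]|=\Delta_s(\mathcal{F})$ and a $t$-cover $G\in{V\brack k}$ with $\dim(S\cap G)<t$. The plan is to cover $\mathcal{F}[S]$ by few families of the form $\mathcal{F}[S']$ with $\dim S'=s+1$. Each such family has at most $\Delta_{s+1}(\mathcal{F})$ members, which gives the bound.

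First, take any $F\in\mathcal{F}[S]$. Since $G$ is a $t$-cover, $\dim(F\cap G)\ge t$. Also $S\subseteq F$, so $F\supseteq S+(F\cap G)$. Because $\dim(S\cap G)<t\le \dim(F\cap G)$, the subspace $F\cap G$ is not contained in $S$, so it contains a vector outside $S\cap G$. To make the count work, first choose a $t$-subspace $U\subseteq F\cap G$. We have $\dim(U\cap S)\le \dim(S\cap G)\le t-1$, so $U\not\subseteq S$. More usefully, pass to $G$ and fix a subspace $W\subseteq G$ with $S\cap G\subseteq W$ and $\dim W=t-1$; this is possible since $\dim(S\cap G)\le t-1\le k$. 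Then $\dim(F\cap G\cap W)\le t-1<\dim(F\cap G)$, so $F\cap G\not\subseteq W$. Hence $F$ contains a $1$-subspace $P\subseteq G$ with $P\not\subseteq W$.

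The crude count gives ${k\brack 1}-{t-1\brack 1}$ choices of $P$, which is too many. We want ${k-t+1\brack 1}$. The better approach is to use the quotient $G/W$, of dimension $k-t+1$. Since $\dim(F\cap G)\ge t>\dim W$, we get $\dim(F\cap G)\ge \dim((F\cap G)\cap W)+1$. So $((F\cap G)+W)/W$ is a nonzero subspace of $G/W$, and it contains some $1$-subspace $\bar P$ of $G/W$. There are exactly ${k-t+1\brack 1}$ such $\bar P$. For each $\bar P$, however, $F$ contains a point of the preimage $\bar P+W$ outside $W$, and that point is not canonical. We need to replace "a point" by a fixed $(s+1)$-space.

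The fix is to choose $W$ not inside $G$ but adapted to $S$. Set $W'=S\cap G$, of dimension $w\le t-1$. Then $F\cap G\supseteq$ something of dimension at least $t$, and $F\supseteq S$. Consider $\dim(F\cap (S+G))\ge \dim(S+(F\cap G))=s+\dim(F\cap G)-\dim(S\cap F\cap G)\ge s+t-w$. In the quotient $(S+G)/S$, of dimension $k-w$, the image of $F\cap(S+G)$ has dimension at least $t-w\ge1$. So $F$ contains some $S'=S+P$ of dimension $s+1$ with $P\subseteq G$. To reduce the count, choose inside $(S+G)/S\cong G/(S\cap G)$ a fixed subspace $\bar Y$ of codimension $t-w-1$, which has dimension $k-t+1$. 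A subspace of dimension at least $t-w$ meets a subspace of codimension $t-w-1$ nontrivially. Hence the image of $F$ contains a $1$-subspace of $\bar Y$. There are ${k-t+1\brack 1}$ such $1$-subspaces, and each one lifts to an $(s+1)$-subspace $S'\supseteq S$ with $S'\subseteq F$. Therefore
$$\mathcal{F}[S]\subseteq\bigcup_{\bar P\in{\bar Y\brack 1}}\mathcal{F}[S'_{\bar P}],$$
and so $\Delta_s(\mathcal{F})\le {k-t+1\brack 1}\Delta_{s+1}(\mathcal{F})$.

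The main obstacle is this dimension bookkeeping: checking that $\dim(S\cap F\cap G)\le w$ and that the nontrivial intersection in the quotient holds. Both follow from the standard dimension formula.
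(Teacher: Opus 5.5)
Your final argument, the one via $\bar Y\subseteq (S+G)/S$, is correct and is the paper's proof written in quotient language. If you lift $\bar Y$ to a $(k-t+1)$-subspace $A\subseteq G$ with $A\cap S=0$, you get exactly the paper's choice: $\dim(F\cap A)\geq\dim(F\cap G)-(t-1)\geq 1$ gives $\mathcal{F}[S]\subseteq\bigcup_{L\in{A\brack 1}}\mathcal{F}[S+L]$; you can also drop the exploratory false starts (the crude count and the $G/W$ attempt).
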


\begin{proof}
 Choose $A\in{G\brack k-t+1}$ with  $\dim(A\cap S)=0$. 
	For each $F\in\mathcal{F}$, we have
	$$
	\dim(F\cap A)=\dim(F\cap G\cap A)
	\geq\dim(F\cap G)+\dim(A)-\dim(G)\geq1.
	$$
	Therefore, we have $\mathcal{F}[S]\subseteq \bigcup_{L\in{A\brack 1}}\mathcal{F}[S+L]$. 
	This together with $|\mathcal{F}[S+L]|\leq \Delta_{s+1}(\mathcal{F})$ for any $L\in{A\brack 1}$ yields the desired result. 
\end{proof}

From now on, we assume that $t$-intersecting families are non-empty, since those families we are concerned with  must be non-empty.

\begin{Corollary}\label{key}
	Let $n\geq k\geq t\geq 1$. Suppose that $\mathcal{F}\subseteq {V\brack k}$ is a  $t$-intersecting family. Set $\tau=\tau_{t}(\mathcal{F})$. Then $$\Delta_{s}(\mathcal{F})\leq \Delta_{\tau-1}(\mathcal{F}) {k-t+1\brack 1}^{\tau-s-1} \textnormal{ for any } 0\leq s< \tau.$$ 
	Moreover, we have 
	$$\Delta_{\tau-1}(\mathcal{F})\leq {n-\tau+1\brack k-\tau+1} -q^{(k-t+1)(k-\tau+1)} {n-k-\tau+t\brack k-\tau+1}\leq {k-t+1\brack 1}{n-\tau\brack k-\tau}.$$
\end{Corollary}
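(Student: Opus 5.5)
The first inequality follows by iterating Proposition \ref{degree}. Fix $s$ with $0\le s<\tau-1$ and choose $S\in{V\brack s}$ with $|\mathcal F[S]|=\Delta_s(\mathcal F)$. Take any $G\in\mathcal F$. Since $\mathcal F$ is $t$-intersecting, $G$ is a $t$-cover of $\mathcal F$ of dimension $k$. If $\dim(S\cap G)<t$, Proposition \ref{degree} gives $\Delta_s\le\Delta_{s+1}{k-t+1\brack 1}$ directly. The harder case is $\dim(S\cap G)\ge t$ for every $G\in\mathcal F$. Then $S$ is itself a $t$-cover, so $s\ge\tau$, which contradicts $s<\tau$. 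Hence some $G\in\mathcal F$ satisfies $\dim(S\cap G)<t$, and we get $\Delta_s\le\Delta_{s+1}{k-t+1\brack 1}$ for all $s<\tau-1$ (indeed for all $s<\tau$). Chaining these inequalities from $s$ up to $\tau-1$ yields the first claim.

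For the bound on $\Delta_{\tau-1}$, take $S\in{V\brack \tau-1}$ attaining the maximum. Since $\dim S<\tau$, $S$ is not a $t$-cover, so there is $G\in\mathcal F$ with $\dim(S\cap G)\le t-1$. Every $F\in\mathcal F[S]$ contains $S$ and meets $G$ in dimension at least $t$. So $\mathcal F[S]$ is contained in the set of $k$-spaces $F\supseteq S$ with $\dim(F\cap G)\ge t$, and we count this set by passing to the quotient $V/S$, of dimension $n-\tau+1$. Here $F/S$ is a $(k-\tau+1)$-space. Set $d=\dim(S\cap G)\le t-1$ and let $\bar G=(G+S)/S$, which has dimension $k-d$. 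We have $\dim(F\cap G)=\dim(F\cap(G+S))-\dim S+d$, because $S\subseteq F$ implies $F\cap(G+S)\supseteq S$, and modularity gives $\dim(F\cap (G+S))=\dim S+\dim(F\cap G)-\dim(S\cap G)$. Thus the condition becomes $\dim(\bar F\cap\bar G)\ge t-d$, and the bad $F$ are those with $\dim(\bar F\cap\bar G)\le t-d-1$.

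We bound the number of bad $F$ from below by the number with $\bar F\cap \bar G'=0$ for a suitable subspace $\bar G'\le\bar G$ of dimension $k-t+1$. Such a subspace exists because $k-d\ge k-t+1$. If $\bar F\cap\bar G'=0$, then $\dim(\bar F\cap\bar G)\le (k-d)-(k-t+1)=t-d-1$, so $F$ is bad. The number of $(k-\tau+1)$-subspaces of an $(n-\tau+1)$-space meeting a fixed $(k-t+1)$-space trivially is $q^{(k-t+1)(k-\tau+1)}{n-\tau+1-(k-t+1)\brack k-\tau+1}$, and $n-\tau+1-(k-t+1)=n-k-\tau+t$. Subtracting from the total ${n-\tau+1\brack k-\tau+1}$ gives the middle expression.

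The last inequality is the main computational step. I would argue it combinatorially rather than algebraically. The middle expression counts $(k-\tau+1)$-subspaces of the quotient meeting the fixed $(k-t+1)$-space $\bar G'$ nontrivially. Each such subspace contains a $1$-subspace $L\le\bar G'$, and there are ${k-t+1\brack 1}$ choices of $L$. For each $L$, the subspaces containing $L$ number ${n-\tau\brack k-\tau}$. A union bound then gives ${k-t+1\brack 1}{n-\tau\brack k-\tau}$, which completes the proof with no heavy estimates.
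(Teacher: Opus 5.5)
Your proof is correct and follows the paper's argument. For the first bound you use, as the paper does, the fact that $S$ is not a $t$-cover together with iterating Proposition~\ref{degree}. For $\Delta_{\tau-1}$ you place $\mathcal F[S]$ inside the $k$-spaces through $S$ that meet a fixed $(k-t+1)$-space nontrivially, then take the same union bound over its $1$-subspaces. The only difference is cosmetic: you phrase the middle step in the quotient $V/S$ with $\bar G'\le (G+S)/S$, whereas the paper chooses $A\in{G\brack k-t+1}$ with $A\cap S=0$ directly in $V$, and the two give exactly the same family.
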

\begin{proof}
	For each $0\leq s<\tau$ and $S\in{V\brack s}$, since $S$ is not a $t$-cover of $\mathcal{F}$, there exists $G\in\mathcal{F}$ such that $\dim(G\cap S)<t$. The first inequality follows by applying  Proposition \ref{degree} repeatedly.  
	
	Pick  $S\in{V\brack \tau-1}$ and
	$G\in\mathcal{F}$ with $\dim(S\cap G)<t$.
	Choose $A\in{G\brack k-t+1}$ with $\dim(A\cap S)=0$.  Then by $\dim(F\cap A)=\dim(F\cap G\cap A)\geq 1$ we have
	$$\mathcal{F}[S]\subseteq\left\{F\in{V\brack k}:S\subseteq F,\ \dim(F\cap A)\geq1\right\}.$$
The family displayed on the right-hand side has size
	${n-\tau+1\brack k-\tau+1} -q^{(k-t+1)(k-\tau+1)} {n-k-\tau+t\brack k-\tau+1}$.
	On the other hand, every subspace counted contains $S+L$ for some
	$L\in{A\brack1}$, so there are at most 
	${k-t+1\brack1}{n-\tau\brack k-\tau}$ such subspaces. This proves the upper bounds on $\Delta_{\tau-1}(\mathcal{F})$.
\end{proof}

\begin{Lemma}\label{2608181}
	Let  $\mathcal{F}$ be a family of subspaces of $V$. Suppose that $A$ and $B$ are  two $t$-covers of $\mathcal{F}$ with $\dim(A\cap B)\leq t$. Set $c=\dim(A\cap B)$. Then 
	$$\left|\mathcal{F}\right|\leq \sum_{j=0}^{c}q^{2j(t-c+j)}{c\brack j}{\dim(A)-c\brack t-c+j}{\dim(B)-c\brack t-c+j}\Delta_{2t-c+j}(\mathcal{F}).$$
\end{Lemma}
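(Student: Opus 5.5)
Set $C=A\cap B$, so $\dim C=c\le t$. Each $F\in\mathcal{F}$ satisfies $\dim(F\cap A)\ge t$ and $\dim(F\cap B)\ge t$. The idea is to find inside $F$ a subspace of dimension $2t-c+j$ built from pieces of $A$ and $B$, indexed by $j=\dim(F\cap C)$ or a truncation of it. We then count the possible such subspaces and bound the members of $\mathcal{F}$ containing each one by $\Delta_{2t-c+j}(\mathcal{F})$.

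\textbf{Step 1: choose the witness subspace.} Fix $F\in\mathcal{F}$ and let $j$ be the largest integer with $j\le c$ such that some $j$-subspace $J\subseteq F\cap C$ extends as follows. We need a $(t-c+j)$-subspace $P\subseteq F\cap A$ with $P\cap C=0$ (so $J+P$ is a $(t)$-subspace of $F\cap A$ meeting $C$ exactly in $J$), and similarly a $(t-c+j)$-subspace $Q\subseteq F\cap B$ with $Q\cap C=0$.

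To see that $j=\dim(F\cap C)$ works (capped at $c$), argue as follows. Since $\dim(F\cap A)\ge t$ and $\dim(F\cap A\cap C)=\dim(F\cap C)=:j_0$, a complement of $F\cap C$ in $F\cap A$ has dimension at least $t-j_0$. We want $t-c+j$ with $j\le j_0$. Taking $j=j_0$ requires $t-c+j_0\le t-j_0$, which fails in general. So instead take $J=F\cap C$ and $P$ a complement of dimension $\ge t-j_0$, and then choose the index so that the counts fit. More precisely, with $j$ defined by $t-c+j=\min$ of the available complement dimensions, the lower-dimensional choice $t-c+j$ with $0\le j\le c$ is always achievable, because $t-j_0\ge t-c$.

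So define $j:=c-j_0$? Check: we need $t-c+j\le t-j_0$, i.e.\ $j\le c-j_0$. We also need $J\subseteq F\cap C$ of dimension $j\le j_0$. The sum $W=J+P+Q$ then lies in $F$. Because $A\cap B=C$ and $P,Q$ avoid $C$, the space $W$ has dimension $j+2(t-c+j)$ only if the pieces are independent. Matching the target $2t-c+j$ instead suggests taking $J\subseteq C$ of dimension $c-j$, and pieces $P\subseteq A$, $Q\subseteq B$ of dimension $t-c+j$ each, transversal to $C$. Then $\dim W=(c-j)+2(t-c+j)=2t-c+j$.

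Existence follows by choosing $J=F\cap C$, so that $c-j=j_0$ and $j=c-j_0$. In this case $F\cap A$ has a complement to $J$ of dimension $\ge t-j_0=t-c+j$. Independence holds because $(J+P)\cap(J+Q)\subseteq A\cap B\cap F=J$.

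\textbf{Step 2: count the witnesses.} For fixed $j$, $J$ ranges over ${C\brack c-j}={c\brack j}$ choices. The pairs $(J,P)$ with $P\subseteq A$, $P\cap C=0$ and $J+P$ a fixed type correspond to $(t-c+j)$-subspaces of $A/C$ together with a lift. The lift freedom is $q^{j(t-c+j)}$, counting subspaces of $A$ meeting $C$ only in $J$ modulo $J$: they lie in $A/J$, avoid $C/J$ of dimension $j$, and project onto a fixed subspace of $A/C$. Thus the number of choices of $J+P$ is ${c\brack j}{\dim(A)-c\brack t-c+j}q^{j(t-c+j)}$, and the $B$-side contributes analogously. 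This gives the factor
\[q^{2j(t-c+j)}{c\brack j}{\dim(A)-c\brack t-c+j}{\dim(B)-c\brack t-c+j}.\]
Each $F$ contains at least one such $W=(J+P)+(J+Q)$ of dimension $2t-c+j$.

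\textbf{Step 3: conclude.} Summing over $j$ and over $W$, each $W$ is contained in at most $\Delta_{2t-c+j}(\mathcal{F})$ members of $\mathcal{F}$, which yields the bound.

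The main obstacle is the bookkeeping in Step 1. One must ensure the witness is indexed by $j=c-\dim(F\cap C)$, with $J$ forced to be exactly $F\cap C$ so the lifts avoid $C$ outside $J$. One must also verify the dimension of $W$ via $A\cap B=C$, and check the $q$-power count of $t$-subspaces of $A$ meeting $C$ exactly in $J$.
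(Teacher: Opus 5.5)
Your proof is correct and takes essentially the same approach as the paper. Both arguments index $F$ by $\dim(F\cap C)=c-j$, take $X=J+P\subseteq F\cap A$ and $Y=J+Q\subseteq F\cap B$ as $t$-subspaces meeting $C$ exactly in $J=F\cap C$ (so $X\cap Y=J$ and $\dim(X+Y)=2t-c+j$), and count such pairs by $q^{2j(t-c+j)}{c\brack j}{\dim(A)-c\brack t-c+j}{\dim(B)-c\brack t-c+j}$, with ${c\brack j}$ counted once because $J$ is shared. In a write-up, delete the false starts in Step 1 and keep only the final choice $J=F\cap C$.
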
	
\begin{proof}
	Set   $C=A\cap B$ and  
	$\mathcal{C}_{j}=\left\{F\in\mathcal{F}: \dim(F\cap C)=j\right\}$ for $0\leq j\leq c$. Then $\mathcal{F}=\cup_{j=0}^{c}\mathcal{C}_{j}$. For a subspace $H$ of $C$, write 
	$$\mathcal{D}(H)=\left\{(X, Y)\in{A\brack t}\times {B\brack t}: X\cap C=Y\cap C=H\right\}, $$ 
	and we have
	\begin{equation}\label{2609241}
		|\mathcal{D}(H)|\leq q^{2(c-\dim(H))(t-\dim(H))}{\dim(A)-c\brack t-\dim(H)}{\dim(B)-c\brack t-\dim(H)}.
	\end{equation}
For $(X,Y)\in\mathcal D(H)$, by $H\subseteq X\cap Y= X\cap Y\cap C\subseteq H$, we have $X\cap Y=H$ and $\dim(X+Y)=2t-\dim(H)$.

 Recall that $\mathcal{F}=\cup_{j=0}^{c}\mathcal{C}_{j}$.	It is sufficient to show 
	\begin{equation}\label{2609242}
		|\mathcal{C}_{c-j}|\leq q^{2j(t-c+j)}{c\brack j}{\dim(A)-c\brack t-c+j}{\dim(B)-c\brack t-c+j}\Delta_{2t-c+j}(\mathcal{F})\;\;\mbox{for}\;\;0\leq j\leq c.
	\end{equation}
	Fix such a $j$. If $\mathcal{C}_{c-j}$  is empty, then the desired result is clear. Next, we may assume that $\mathcal{C}_{c-j}$ is non-empty. Choose $F\in \mathcal{C}_{c-j}$. Since  both $A$ and $B$ are $t$-covers of $\mathcal{F}$, there exist $X\in{F\cap A\brack t}$ and $Y\in{F\cap B\brack t}$ such that $F\cap C\subseteq X$ and $F\cap C\subseteq Y$. Note that 
	$X\cap C\subseteq F\cap C$ and $Y\cap C\subseteq F\cap C$. Then $X\cap C=Y\cap C=F\cap C$. It follows that 
	$$\mathcal{C}_{c-j}\subseteq \bigcup_{H\in{C\brack c-j}}\left( \bigcup_{(X, Y)\in \mathcal{D}(H)} \mathcal{F}[X+Y]\right).$$
	Then (\ref{2609242}) follows 	from  (\ref{2609241}). This completes the proof.
\end{proof}

Denote the family of all subspaces of $V$ with dimension at most  $k$ by ${V\brack \leq k}$.  Suppose that $\mathcal{F}\subseteq{V\brack \leq k}$ 
satisfies $\dim(F\cap G)\geq t$
for any $F,G\in\mathcal{F}$. Such a family is also called \textit{$t$-intersecting}. A family $\mathcal{F}^{*}$ is called a \textit{fingerprint} of $\mathcal{F}$ if the following hold.
\begin{itemize}
	\item[\normalfont(i)] Every member of $\mathcal{F}^{*}$ is contained in some member of $\mathcal{F}$;
	\item[\normalfont(ii)] Every member of $\mathcal{F}$ contains some member of $\mathcal{F}^{*}$;
	\item[\normalfont(iii)] Every member of $\mathcal{F}^{*}$ is a minimal $t$-cover of $\mathcal{F}^{*}$.
\end{itemize}

	\begin{Lemma}
		Let $\mathcal{F}\subseteq {V\brack \leq k}$ be a $t$-intersecting family. Then there exists a fingerprint  of $\mathcal{F}$.
	\end{Lemma}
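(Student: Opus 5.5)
We will obtain a fingerprint through a \emph{peeling procedure}: start from $\mathcal{F}$ and keep replacing members by smaller subspaces while the required properties are maintained. Concretely, consider all families $\mathcal{G}\subseteq{V\brack \leq k}$ with the following three properties:
\begin{itemize}
\item[(a)] $\mathcal{G}$ is $t$-intersecting;
\item[(b)] every member of $\mathcal{G}$ is contained in some member of $\mathcal{F}$;
\item[(c)] every member of $\mathcal{F}$ contains some member of $\mathcal{G}$.
\end{itemize}
The family $\mathcal{F}$ itself has these properties, so the collection is non-empty. Since $V$ is finite, only finitely many such $\mathcal{G}$ exist. Among them we pick one minimizing the potential $\Phi(\mathcal{G})=\sum_{G\in\mathcal{G}}\dim(G)$. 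Equivalently, we iterate a reduction step that strictly decreases $\Phi$, and it must terminate. We then claim that this minimizer $\mathcal{G}$ is a fingerprint.

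Conditions (i) and (ii) hold by (b) and (c). Condition (iii) has two parts.

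\textbf{Each member is a $t$-cover.} Take $G\in\mathcal{G}$. For any $G'\in\mathcal{G}$, property (a) gives $\dim(G\cap G')\geq t$. So $G$ is a $t$-cover of $\mathcal{G}$.

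\textbf{Each member is minimal.} Suppose some $G\in\mathcal{G}$ has a proper subspace $G_0$ that is still a $t$-cover of $\mathcal{G}$. Set $\mathcal{G}'=(\mathcal{G}\setminus\{G\})\cup\{G_0\}$. We check the three properties for $\mathcal{G}'$.
\begin{itemize}
\item For (a): pairs not involving $G_0$ are unchanged. For $G'\in\mathcal{G}\setminus\{G\}$ we have $\dim(G_0\cap G')\geq t$ because $G_0$ is a $t$-cover of $\mathcal{G}$. Also $\dim(G_0\cap G_0)=\dim(G_0)\geq\dim(G_0\cap G)\geq t$, since $G\in\mathcal{G}$ and $G_0$ covers $\mathcal{G}$.
\item For (b): $G_0\subseteq G$, and $G$ lies in some member of $\mathcal{F}$.
\item For (c): any member of $\mathcal{F}$ that contained $G$ now contains $G_0$.
\end{itemize}
If $G_0$ already belongs to $\mathcal{G}$, then $\mathcal{G}'=\mathcal{G}\setminus\{G\}$ and $\Phi$ drops by $\dim(G)$. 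Otherwise $\Phi$ drops by $\dim(G)-\dim(G_0)>0$. Either way this contradicts minimality, so every member of $\mathcal{G}$ is a minimal $t$-cover.

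The only delicate point is that a replacement must not destroy the $t$-intersecting property, which the cover argument for $\mathcal{G}'$ would otherwise need. This is exactly why we replace $G$ only by a subspace that is a $t$-cover of the \emph{current} family $\mathcal{G}$, not merely of $\mathcal{F}$. We also need to handle the case where the replacement coincides with an existing member, which we did by allowing deletion.

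The members of $\mathcal{G}$ have dimension at most $k$, so $\mathcal{G}\subseteq{V\brack\leq k}$ as required. Hence $\mathcal{G}$ is a fingerprint of $\mathcal{F}$.
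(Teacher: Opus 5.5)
Your proposal is correct and follows the paper's proof: you minimize $\sum_{G\in\mathcal{G}}\dim(G)$ over $t$-intersecting families satisfying (i) and (ii), then show that replacing a non-minimal member by a proper subspace that is a $t$-cover of the current family preserves all three properties and strictly lowers the sum. Your explicit treatment of the case where the replacement subspace already lies in $\mathcal{G}$ is a small detail the paper leaves implicit.
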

	\begin{proof}
		Let $\mathscr{C}$ be the collection of all $t$-intersecting families satisfying conditions (i) and (ii) in the definition of a fingerprint of $\mathcal{F}$. Since $\mathcal{F}\in\mathscr{C}$, this collection is non-empty. Choose $\mathcal F^*\in\mathscr C$ minimizing
		$\sum_{F\in\mathcal F^*}\dim(F)$. Let us verify that $\mathcal{F}^*$ is a fingerprint of $\mathcal{F}$. It suffices to verify the condition (iii). Suppose for contradiction that $F\in\mathcal{F}^{*}$ is not a minimal $t$-cover of $\mathcal{F}^{*}$. Since $\mathcal{F}^{*}$ is $t$-intersecting, a proper subspace $G$ of $F$ is a minimal $t$-cover of $\mathcal{F}^{*}$. Then the $t$-intersecting family $(\mathcal{F}^{*}\setminus\{F\})\cup\{G\}$ still satisfies (i), (ii) and it has a strictly smaller sum of dimensions, contradicting the choice of $\mathcal F^*$.
	\end{proof}
	
Our approach is based on the peeling procedure and the $t$-cover method. The peeling procedure	was introduced by Kupavskii and Zakharov \cite{Kupavskii-Zakharov-2024} and  combined with the $t$-cover method by Wen and Lv \cite{Wen-Lv-largefamilyI}. We also draw inspiration from the $q$-analog of the
peeling procedure developed by Ihringer and Kupavskii
\cite{IK-2026-02698}.

The recently work \cite{Li-Lv-Wen-2026} also uses the $t$-cover method without employing the peeling procedure. Moreover, the extremal problems it considers and the extremal configurations it characterizes differ from those in the present paper.
	
We present the peeling procedure in the form of the following algorithm.

\begin{algorithm}\label{peeling}
	Suppose that $\mathcal{F}\subseteq {V\brack \leq k}$ is a  $t$-intersecting family,
	and a fingerprint $\mathcal{S}_{0}$ of $\mathcal{F}$ is given. Set $N=0$. For
	$i=0,1,\ldots,k-t$, do the following:
	\begin{itemize}
		\item[(a)] Set $\mathcal{X}_{i}=\mathcal{S}_{i}\cap {V\brack k-i}$.
		If $\mathcal{S}_{i}=\mathcal{X}_{i}$, then terminate the procedure and set $N=i$.
		\item[(b)] Find a fingerprint of $\mathcal{S}_{i}\setminus\mathcal{X}_{i}$,
		and denote it by $\mathcal{S}_{i+1}$.
	\end{itemize}
	Output $N$ and $(\mathcal{S}_{i},\mathcal{X}_{i})$ for $0\leq i\leq N$.
\end{algorithm}

Let us prove several basic properties of the output families.

\begin{Lemma}\label{2608182}
	The following hold for $i=0,1,\ldots, N$.
\begin{itemize}
	\item[\normalfont(i)] $\mathcal{S}_{i}\subseteq {V\brack \leq k-i}$ and $\mathcal{X}_{i}\subseteq {V\brack k-i}$. Moreover, both of them are  antichains. 
	\item[\normalfont(ii)] $\mathcal{F}[\mathcal{S}_{i-1}]\subseteq \mathcal{F}[\mathcal{X}_{i-1}]\cup\mathcal{F}[\mathcal{S}_{i}]$ and $\mathcal{F}=\left(\bigcup_{j=0}^{i-1}\mathcal{F}[\mathcal{X}_{j}]\right)\cup \mathcal{F}[\mathcal{S}_{i}]$ for $i\geq 1$.
	\item[\normalfont(iii)]  If  $\mathcal{H}\subseteq \mathcal{S}_{i}$ consists of $a$-subspaces, then 
	 $$\Delta_{s}(\mathcal{H})\leq {k-t-i+1\brack 1}^{a-s}\ \ \textnormal{for any $0\leq s\leq a$.}$$
\end{itemize}
\end{Lemma}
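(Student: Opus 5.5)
We prove the three parts by induction on $i$, with the main work in (iii). Throughout, $\mathcal{S}_0$ is a fingerprint of $\mathcal{F}$ and $\mathcal{S}_{i+1}$ is a fingerprint of $\mathcal{S}_i\setminus\mathcal{X}_i$.

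\textbf{Part (i).} We induct on $i$. For $i=0$, we have $\mathcal{S}_0\subseteq{V\brack\leq k}$ by property (i) of a fingerprint, since every member of $\mathcal{S}_0$ lies in a member of $\mathcal{F}$.

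Suppose $\mathcal{S}_i\subseteq{V\brack\leq k-i}$. Then $\mathcal{S}_i\setminus\mathcal{X}_i\subseteq{V\brack\leq k-i-1}$. Every member of its fingerprint $\mathcal{S}_{i+1}$ is contained in a member of $\mathcal{S}_i\setminus\mathcal{X}_i$, so $\mathcal{S}_{i+1}\subseteq{V\brack\leq k-i-1}$.

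Each $\mathcal{S}_i$ is a fingerprint, so it is $t$-intersecting and every member is a minimal $t$-cover of $\mathcal{S}_i$. If $A\subsetneq B$ both lay in $\mathcal{S}_i$, then $A$ would be a $t$-cover of $\mathcal{S}_i$, since it is $t$-intersecting with all members. Being a proper subspace of $B$, this contradicts the minimality of $B$. So $\mathcal{S}_i$ is an antichain, and hence so is its subfamily $\mathcal{X}_i$.

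\textbf{Part (ii).} Let $F\in\mathcal{F}[\mathcal{S}_{i-1}]$, so $F\supseteq S$ for some $S\in\mathcal{S}_{i-1}$. If $S\in\mathcal{X}_{i-1}$, then $F\in\mathcal{F}[\mathcal{X}_{i-1}]$. Otherwise $S\in\mathcal{S}_{i-1}\setminus\mathcal{X}_{i-1}$, and by property (ii) of a fingerprint, $S$ contains some member of $\mathcal{S}_i$; hence $F\in\mathcal{F}[\mathcal{S}_i]$.

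For the second statement, note $\mathcal{F}=\mathcal{F}[\mathcal{S}_0]$ by property (ii) of the fingerprint $\mathcal{S}_0$. Iterating the first statement from $i-1$ down to $0$ gives the decomposition.

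\textbf{Part (iii).} Here $\mathcal{H}\subseteq\mathcal{S}_i$ consists of $a$-subspaces, and we have $a\leq k-i$. We prove the bound by downward induction on $s$.

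For $s=a$, we have $\Delta_a(\mathcal{H})\leq1$, since $\mathcal{H}[S]\subseteq\{S\}$.

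Now let $s<a$ and take $S\in{V\brack s}$ with $|\mathcal{H}[S]|=\Delta_s(\mathcal{H})$. If $\mathcal{H}[S]$ is empty, there is nothing to prove. Otherwise pick $H\in\mathcal{H}[S]$. Then $S\subsetneq H$, and $S$ is not a $t$-cover of $\mathcal{S}_i$, because $H$ is a minimal $t$-cover. So there is $G\in\mathcal{S}_i$ with $\dim(G\cap S)<t$.

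We would like to apply Proposition \ref{degree} with this $G$, viewed as a $t$-cover of $\mathcal{H}$. This is valid because $\mathcal{S}_i$ is $t$-intersecting, so $G$ is a $t$-cover of $\mathcal{H}\subseteq\mathcal{S}_i$. The point needing care is the dimension of $G$: it is at most $k-i$, not exactly $k$ as the proposition assumes.

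Rereading the proof of Proposition \ref{degree}, it only uses $\dim G=g$ and produces the factor ${g-t+1\brack1}$. Since $g\leq k-i$, this is at most ${k-i-t+1\brack1}$. Alternatively, one can enlarge $G$ to a $(k-i)$-subspace avoiding extra intersection with $S$; but the direct reading of the proof is cleaner.

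This gives $\Delta_s(\mathcal{H})\leq{k-t-i+1\brack1}\Delta_{s+1}(\mathcal{H})$. Combined with the induction hypothesis, we get $\Delta_s(\mathcal{H})\leq{k-t-i+1\brack 1}^{a-s}$.

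The main obstacle is this dimension adjustment in applying Proposition \ref{degree}. The remaining steps are routine.
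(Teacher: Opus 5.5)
Your proof is correct and follows the paper's argument essentially step for step: (i) from the fingerprint containment property plus minimality giving the antichain property, (ii) from fingerprint property (ii) iterated starting at $\mathcal{F}=\mathcal{F}[\mathcal{S}_0]$, and (iii) by repeatedly applying Proposition~\ref{degree} with a witness $G\in\mathcal{S}_i$ satisfying $\dim(G\cap S)<t$, which exists because a member of $\mathcal{H}[S]$ is a minimal $t$-cover. Your remark that Proposition~\ref{degree} applies with $\dim G\le k-i$ in place of $k$, giving the factor ${\dim(G)-t+1\brack 1}\le{k-t-i+1\brack 1}$, is exactly what the paper uses implicitly; the enlargement alternative would need enough room in $V$, so the direct reading you chose is the right one.
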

\begin{proof}
	(i) The facts that $\mathcal{S}_{i}\subseteq {V\brack \leq k-i}$ and $\mathcal{X}_{i}\subseteq {V\brack k-i}$ are immediate. Since each member of $\mathcal{S}_{i}$ is a minimal $t$-cover of $\mathcal{S}_{i}$, we know $\mathcal{S}_{i}$ is an antichain, and then so is $\mathcal{X}_i$.
	
	(ii) For $i<N$, each member of $\mathcal S_i\setminus\mathcal X_i$
	contains a member of $\mathcal S_{i+1}$ by property (ii) in the
	definition of a fingerprint. Then
	$\mathcal F[\mathcal S_i]\subseteq
	\mathcal F[\mathcal X_i]\cup\mathcal F[\mathcal S_{i+1}]$.
	Iterating this inclusion and using
	$\mathcal F=\mathcal F[\mathcal S_0]$ gives the second equality.
	
	(iii) If $a=s$ or $\Delta_{s}(\mathcal{H})=0$, then the inequality is clear.  We may assume that  $a>s$ and $\Delta_{s}(\mathcal{H})>0$. Choose an $s$-subspace $S$ with $|\mathcal H[S]|=\Delta_s(\mathcal H)$, and pick $F\in\mathcal H[S]$. Since $S\subsetneq F$ and $F$ is a minimal $t$-cover of $\mathcal S_i$, there exists $G\in\mathcal S_i$ with $\dim(S\cap G)<t$. Note that $G$ is a $t$-cover of $\mathcal{H}$ and $\dim(G)\leq k-i$. By Proposition \ref{degree}, we have
	$
	\Delta_s(\mathcal H)
	\leq
	\Delta_{s+1}(\mathcal H)
{k-t-i+1\brack 1}$.
	Applying this inequality repeatedly and using $\Delta_a(\mathcal{H})=1$ yields the desired result. 
\end{proof}
To state the bounds on the sizes of the families $\mathcal{X}_{i}$
concisely, write 
\begin{align}
	\psi(k,t,a,i)	&={a\brack t}{k-t-i+1\brack1}^{k-t-i},
	\label{2609111}\\
	\varphi(k,t,a,b,i)
	&=\sum_{j=0}^{t}q^{2j^{2}}
	{t\brack j}{a-t\brack j}{b-t\brack j}
	{k-t-i+1\brack1}^{k-t-i-j}.
	\label{2609112}
\end{align}

\begin{Lemma}\label{2609061}
	Set $m_{i}=\min_{F\in\mathcal{S}_{i}}\dim(F)$ for $0\leq i\leq N$.	Then  $ m_{0}\geq m_{1}\geq \cdots \geq m_{N}\geq t$. Moreover, the following hold.
	\begin{itemize}
		\item[\normalfont(i)] $\left|\mathcal{X}_{i}\right|\leq \varphi(k,t,m_{i},k-i,i)\leq \psi(k,t,m_{i}, i)$ for  $0\leq i\leq N$.
		\item [\normalfont(ii)] If $N\geq 1$, then $m_{i}\leq k-i-1$ for $0\leq i\leq N-1$. 
		\item[\normalfont(iii)]  If  $\mathcal{T}_{t}(\mathcal{F})\cap\mathcal{S}_{0}$ is non-empty, then  $m_{i}\leq \tau_{t}(\mathcal{F})$ for  $0\leq i\leq N$. 
	\end{itemize}
\end{Lemma}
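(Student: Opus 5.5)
The plan is to take the parts in the order: monotonicity, then (ii) and (iii), then (i). Only (i) needs real work.

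\textbf{Monotonicity and $m_N\ge t$.} Fix $i<N$ and let $F\in\mathcal S_i$ have $\dim F=m_i$.
\begin{itemize}
\item If $F\in\mathcal X_i$, then $m_i=k-i$. Since $\mathcal S_{i+1}$ is a fingerprint of $\mathcal S_i\setminus\mathcal X_i\subseteq{V\brack \le k-i-1}$, its members have dimension at most $k-i-1<m_i$.
\item If $F\notin\mathcal X_i$, then $F\in \mathcal S_i\setminus\mathcal X_i$. By property (ii) of a fingerprint, $F$ contains a member of $\mathcal S_{i+1}$, so $m_{i+1}\le m_i$.
\end{itemize}
For $m_N\ge t$: every member $G$ of the non-empty family $\mathcal S_N$ is a $t$-cover of $\mathcal S_N$. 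In particular $\dim(G\cap G)\ge t$, so $\dim G\ge t$.

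\textbf{Parts (ii) and (iii).} For (ii), if $i<N$ the procedure did not stop at step $i$. Hence $\mathcal S_i\ne\mathcal X_i$, so some member of $\mathcal S_i$ has dimension strictly less than $k-i$, and $m_i\le k-i-1$. For (iii), any $T\in\mathcal T_t(\mathcal F)\cap\mathcal S_0$ has $\dim T=\tau_t(\mathcal F)$. Thus $m_0\le\tau_t(\mathcal F)$, and monotonicity gives $m_i\le\tau_t(\mathcal F)$ for all $i$.

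\textbf{Part (i), first inequality.} Assume $\mathcal X_i\neq\emptyset$; otherwise there is nothing to prove. Choose $A\in\mathcal S_i$ with $\dim A=m_i$ and $B\in\mathcal X_i$. Both are $t$-covers of $\mathcal X_i$, since every member of $\mathcal S_i$ is a $t$-cover of $\mathcal S_i$.

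When $c=\dim(A\cap B)=t$, apply Lemma~\ref{2608181} to $\mathcal X_i$. It bounds $|\mathcal X_i|$ by
\[
\sum_{j=0}^t q^{2j^2}{t\brack j}{m_i-t\brack j}{k-i-t\brack j}\Delta_{t+j}(\mathcal X_i).
\]
By Lemma~\ref{2608182}(iii) with $a=k-i$, we have $\Delta_{t+j}(\mathcal X_i)\le {k-t-i+1\brack 1}^{k-i-t-j}$. Substituting gives exactly $\varphi(k,t,m_i,k-i,i)$.

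\textbf{Main obstacle: $\dim(A\cap B)>t$.} We only know $\dim(A\cap B)\ge t$, and Lemma~\ref{2608181} requires $c\le t$. So the case $c>t$ must be handled, including $A=B$ when $m_i=k-i$.

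My first attempt is to choose $B\in\mathcal X_i$ minimizing $\dim(A\cap B)$. I would then try to show this minimum is $t$, using that $A$ is a \emph{minimal} $t$-cover of $\mathcal S_i$: for every proper subspace of $A$ some member meets it in dimension $<t$.

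Failing that, I would redo the proof of Lemma~\ref{2608181} directly with $c>t$. Each $F\in\mathcal X_i$ contains $X+Y$, where $X\in{A\brack t}$, $Y\in{B\brack t}$, and $X\cap C=Y\cap C$ is a subspace of $F\cap C$ of dimension at most $t$. I would then check that the resulting sum is still dominated by $\varphi$ with $a=m_i$, $b=k-i$, using $\dim(X+Y)=2t-\dim(X\cap C)$.

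\textbf{Part (i), $\varphi\le\psi$.} Rewrite $q^{2j^2}=q^{j^2}\cdot q^{j^2}$. The $q$-Vandermonde identity gives
\[
\sum_j q^{j^2}{t\brack j}{a-t\brack j}={a\brack t}.
\]
It then suffices to show
\[
q^{j^2}{k-i-t\brack j}\le{k-t-i+1\brack 1}^{j}.
\]
This follows factorwise. Since $q^{j}(q^{m-l}-1)\le q^{j-l}\cdot q^{m+l}$, each factor $q^{j}\frac{q^{m-l}-1}{q^{j-l}-1}$ is at most $q^{m}$, and $q^{m}\le{m+1\brack 1}$ where $m=k-i-t$.
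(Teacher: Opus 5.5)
Your arguments for the monotonicity of the $m_i$, for $m_N\ge t$, for (ii) and (iii), and the reduction of $\varphi\le\psi$ to the $q$-Vandermonde identity are fine and essentially match the paper's.

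\textbf{The gap: the first inequality in (i).} You never produce a second $t$-cover meeting $A$ in dimension exactly $t$.

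Your first route, minimizing $\dim(A\cap B)$ over $B\in\mathcal X_i$, comes with no argument, and the minimality of $A$ does not supply one. For a hyperplane of $A$, minimality gives a witness in $\mathcal S_i$. For $i<N$ that witness may lie in $\mathcal S_i\setminus\mathcal X_i$, so it tells you nothing about $\min_{B\in\mathcal X_i}\dim(A\cap B)$. Only for $i=N$, where $\mathcal S_N=\mathcal X_N$, does this route go through.

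Your second route, rerunning Lemma~\ref{2608181} with $c>t$, is not carried out, and its output is not automatically dominated by $\varphi$. Members of $\mathcal X_i$ meeting $C=A\cap B$ in dimension at least $t$ now contribute ${c\brack t}\Delta_t(\mathcal X_i)$ instead of $\Delta_t(\mathcal X_i)$, so a new comparison would be needed. In the case $A=B$ it only yields ${k-i\brack t}\Delta_t(\mathcal X_i)\le\psi(k,t,k-i,i)$, which is weaker than the claimed bound $\varphi$.

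\textbf{The missing observation.} The second cover need not lie in $\mathcal X_i$.
\begin{itemize}
\item Every member of $\mathcal S_i$ is a $t$-cover of $\mathcal S_i\supseteq\mathcal X_i$. By Lemma~\ref{2608182}(i), it has dimension at most $k-i$.
\item If $\dim A=t$, take $G=A$.
\item Otherwise let $H$ be a hyperplane of $A$. Minimality of $A$ gives $G\in\mathcal S_i$ with $\dim(G\cap H)<t\le\dim(G\cap A)$. Since $\dim(G\cap H)\ge\dim(G\cap A)-1$, this forces $\dim(G\cap A)=t$.
\item Now apply Lemma~\ref{2608181} with $c=t$, together with Lemma~\ref{2608182}(iii) for $\mathcal H=\mathcal X_i$ and $a=k-i$. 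This gives $|\mathcal X_i|\le\varphi(k,t,m_i,\dim G,i)\le\varphi(k,t,m_i,k-i,i)$. The last step holds because ${b-t\brack j}$ is nondecreasing in $b$.
\end{itemize}
This is exactly the paper's argument.

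\textbf{A smaller slip in the $\varphi\le\psi$ step.} You claim each factor $q^{j}\frac{q^{m-l}-1}{q^{j-l}-1}$ is at most $q^{m}$. This is false: for $j=1$, $l=0$ and $m\ge 2$ the factor equals $q+q^2+\cdots+q^m$.

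What you actually need is still true. With $s=j-l\ge1$, the bound $q^{s}-1\ge q^{s-1}(q-1)$ shows the factor is at most $\frac{q^{m+1}-q^{l+1}}{q-1}\le{m+1\brack1}$. Alternatively, use ${m\brack j}\le{m-j+1\brack1}^{j}$ and $q^{j}{m-j+1\brack1}\le{m+1\brack1}$, as the paper does.
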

\begin{proof}
	
	For $i<N$, choose $F_i\in\mathcal S_i\setminus\mathcal X_i$
	with $\dim(F_i)=m_i$.
	By property (ii) in the definition of a fingerprint, $F_i$
	contains a member of $\mathcal S_{i+1}$. Hence $m_{i+1}\le m_i$ for $i<N$, and thus
	$m_0\ge m_1\ge\cdots\ge m_N\ge t$.
	
	(i)  Pick $F_{i}\in \mathcal{S}_{i}$ with $\dim(F_{i})=m_{i}$. 
	Since $F_{i}$ is a minimal $t$-cover of $\mathcal{S}_{i}$, there exists $G_{i}\in \mathcal{S}_{i}$ such that $\dim(F_{i}\cap G_{i})=t$.  By Lemma \ref{2608181}, Lemma \ref{2608182} (i) and (iii), we have $\left|\mathcal{X}_{i}\right|\leq \varphi(k,t, m_{i},\dim(G_{i}),i)\leq \varphi(k,t,m_{i},k-i,i)$. For $0\le j\le k-t-i$, we have
	$$q^{j^{2}}{k-t-i\brack j}\leq q^{j^{2}}{k-t-i-j+1\brack 1}^{j}\leq {k-t-i+1\brack 1}^{j}.$$
Setting $(a,b,c)=(m_{i}-t, t,t)$ in Lemma \ref{2609151}, we have ${m_{i}\brack t}=\sum_{j=0}^{t}q^{j^{2}}{m_{i}-t\brack j}{t\brack j}$. It follows that $\varphi(k,t,m_i,k-i,i)\le\psi(k,t,m_i,i)$.
	
	(ii) This follows directly from $\emptyset\neq\mathcal S_i\setminus\mathcal X_i\subseteq{V\brack\leq k-i-1}$.
	
	(iii) As $\mathcal T_t(\mathcal F)\cap \mathcal S_0$ is non-empty,
 we get $m_0\le\tau_t(\mathcal F)$. Then $m_i\le \tau_t(\mathcal F)$ as $m_i\leq m_0$.
\end{proof}
\begin{Lemma}\label{2609056}
	If $N\geq 1$, then $\mathcal S_N=\mathcal S_{N-1}\setminus\mathcal X_{N-1}$ and $\left|\mathcal{X}_{N-1}\right|\leq \varphi(k,t,k-N,k-N,N-1)$.
\end{Lemma}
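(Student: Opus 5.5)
The plan is to first pin down the dimensions of everything in $\mathcal S_{N-1}\setminus\mathcal X_{N-1}$ and $\mathcal S_N$, then run the same argument as in Lemma \ref{2609061} (i), but with the second cover $G$ taken from $\mathcal S_N$ so that both covers have dimension $k-N$.

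\textbf{Step 1: the two families coincide.} Since the procedure terminates at step $N$, we have $\mathcal S_N=\mathcal X_N\subseteq{V\brack k-N}$. By Lemma \ref{2608182} (i), every member of $\mathcal S_{N-1}\setminus\mathcal X_{N-1}$ has dimension at most $k-N+1$ and is not of dimension $k-N+1$, so it has dimension at most $k-N$. Recall that $\mathcal S_N$ is a fingerprint of $\mathcal S_{N-1}\setminus\mathcal X_{N-1}$.
\begin{itemize}
\item By property (ii) of a fingerprint, each $F\in\mathcal S_{N-1}\setminus\mathcal X_{N-1}$ contains some member of $\mathcal S_N$. That member is a $(k-N)$-subspace and $\dim(F)\le k-N$, so $F$ equals it. Hence $\mathcal S_{N-1}\setminus\mathcal X_{N-1}\subseteq\mathcal S_N$.
\item By property (i), each member of $\mathcal S_N$ is contained in some member of $\mathcal S_{N-1}\setminus\mathcal X_{N-1}$, which has dimension at most $k-N$. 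So the two are equal, which gives the reverse inclusion.
\end{itemize}
Thus $\mathcal S_N=\mathcal S_{N-1}\setminus\mathcal X_{N-1}$. This set is nonempty, since otherwise the procedure would have stopped at step $N-1$. All its members are $(k-N)$-subspaces, and they lie in $\mathcal S_{N-1}$.

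\textbf{Step 2: choose two covers meeting in exactly $t$ dimensions.} Pick $F\in\mathcal S_N$. By property (iii) of a fingerprint, $F$ is a minimal $t$-cover of $\mathcal S_N$. So some hyperplane $H$ of $F$ is not a $t$-cover of $\mathcal S_N$, i.e. there is $G\in\mathcal S_N$ with $\dim(G\cap H)<t$. Since $\dim(G\cap F)\ge t$ and passing from $F$ to the hyperplane $H$ loses at most one dimension of the intersection, we get $\dim(F\cap G)=t$. Both $F$ and $G$ have dimension $k-N$. Both belong to $\mathcal S_{N-1}$, hence each is a $t$-cover of $\mathcal S_{N-1}$, and in particular of $\mathcal X_{N-1}$.

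\textbf{Step 3: apply Lemma \ref{2608181}.} Take $A=F$, $B=G$ and $c=t$, applied to the family $\mathcal X_{N-1}$ of $(k-N+1)$-subspaces. This gives
$$|\mathcal X_{N-1}|\le\sum_{j=0}^{t}q^{2j^2}{t\brack j}{k-N-t\brack j}^2\Delta_{t+j}(\mathcal X_{N-1}).$$
Now apply Lemma \ref{2608182} (iii) with $i=N-1$ and $a=k-N+1$:
$$\Delta_{t+j}(\mathcal X_{N-1})\le{k-t-N+2\brack 1}^{k-N+1-t-j}.$$
This exponent equals $k-t-(N-1)-j$, which is exactly the exponent appearing in $\varphi(k,t,k-N,k-N,N-1)$. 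Substituting yields the claimed bound.

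The only delicate point is Step 2. Taking $G$ merely from $\mathcal S_{N-1}$, as in the proof of Lemma \ref{2609061} (i), could give $\dim(G)=k-N+1$ and hence the weaker bound $\varphi(k,t,k-N,k-N+1,N-1)$. Choosing $G$ inside $\mathcal S_N$, using the minimality of $F$ as a cover of $\mathcal S_N$, is what secures $b=k-N$.
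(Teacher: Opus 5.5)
Your proof is correct and follows the paper's own argument. You identify $\mathcal S_N$ with $\mathcal S_{N-1}\setminus\mathcal X_{N-1}$ via fingerprint properties (i) and (ii), then use minimality to pick two members of $\mathcal S_N$ meeting in dimension exactly $t$, and finally combine Lemma~\ref{2608181} with Lemma~\ref{2608182}~(iii) to obtain $\varphi(k,t,k-N,k-N,N-1)$.
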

\begin{proof}
	Every member of $\mathcal S_{N-1}\setminus\mathcal X_{N-1}$
	has dimension at most $k-N$, while $\mathcal S_N=\mathcal{X}_N$ consists
	of $(k-N)$-subspaces. Then by the properties (i) and (ii) in the definition
	of a fingerprint, we get $\mathcal S_N=\mathcal S_{N-1}\setminus\mathcal X_{N-1}$. By the minimality of the members of $\mathcal S_N$, there are 
	$A_N,B_N\in\mathcal S_N$ with $\dim(A_N\cap B_N)=t$.
	Both of them are $t$-covers of $\mathcal X_{N-1}$, so
	Lemmas~\ref{2608181} and~\ref{2608182} (iii) give
	$|\mathcal X_{N-1}|\le\varphi(k,t,k-N,k-N,N-1)$. This completes the proof.
\end{proof}

We proceed by showing two lemmas which are used to estimate the size of $\mathcal{S}_{N}$.

\begin{Lemma}\label{2609221}
	Let $n\geq k\geq t\geq 1$.  Suppose that $\mathcal{F}\subseteq{V\brack k}$ is non-empty and each member of $\mathcal{F}$ is a minimal $t$-cover of $\mathcal{F}$. Pick $T\in {V\brack k-1}$ with $\left|\mathcal{F}[T]\right|=\Delta_{k-1}(\mathcal{F})$. Set
	$$M=\sum_{F\in \mathcal{F}[T]}F, \ \ a=\dim(M)-k+1,\ \  \mathcal{G}=\left\{F\in\mathcal{F}: \dim(F\cap T)=t-1\right\}.$$
	\begin{itemize}
		\item[\normalfont(i)] $\mathcal{G}$ is non-empty.
		\item[\normalfont(ii)] $M\subseteq T+G$ and $\dim(G\cap M)=a+t-1$ for any $G\in\mathcal{G}$. In particular, we have 
		$$\left|\mathcal{G}\right|\leq q^{a(k-t)}{k-1\brack t-1}\Delta_{a+t-1}(\mathcal{F}).$$
		\item[\normalfont(iii)]  $\Delta_{s}(\mathcal{F})\leq {a\brack 1}{k-t+1\brack 1}^{k-s-1}$ for any $0\leq s\leq k-1$.
	\end{itemize}
\end{Lemma}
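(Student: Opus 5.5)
Throughout, write $r=\Delta_{k-1}(\mathcal F)=|\mathcal F[T]|$, so $\dim(M)=a+k-1$ with $a\le r$. The plan is to handle (i) by a minimality argument, derive (ii) from $t$-intersection with members of $\mathcal F[T]$ and then count, and get (iii) by peeling down with Proposition~\ref{degree} to level $k-1$ and bounding that level separately.

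\textbf{Part (i).} $\mathcal F[T]$ is non-empty since $\mathcal F\neq\emptyset$. Pick $F_0\in\mathcal F[T]$. Because $F_0$ is a minimal $t$-cover of $\mathcal F$, the proper subspace $T$ is not a $t$-cover, so some $G\in\mathcal F$ has $\dim(G\cap T)<t$. Since $\dim(G\cap F_0)\ge t$ and $T$ is a hyperplane of $F_0$, we get $\dim(G\cap T)\ge \dim(G\cap F_0)-1\ge t-1$. Hence $\dim(G\cap T)=t-1$, and $G\in\mathcal G$.

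\textbf{Part (ii).} Fix $G\in\mathcal G$. For each $F\in\mathcal F[T]$ the same hyperplane argument gives $\dim(G\cap F)=t$, so $G\cap F\not\subseteq T$. Thus $F=T+(G\cap F)\subseteq T+G$, and summing over $F$ yields $M\subseteq T+G$. Then
$$\dim(G\cap M)=\dim G+\dim M-\dim(G+M)=k+(a+k-1)-\dim(T+G),$$
and $\dim(T+G)=(k-1)+k-(t-1)$. This gives $\dim(G\cap M)=a+t-1$.

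For the count, set $W=G\cap M$. Then $W\cap T=G\cap T$ has dimension $t-1$, and $W+T$ has dimension $(a+t-1)+(k-1)-(t-1)=a+k-1$, so $W+T=M$. Hence $W$ is an $(a+t-1)$-subspace of $M$ that meets $T$ in a $(t-1)$-subspace $U$ and is complementary to $T$ modulo $U$. We count such $W$: there are ${k-1\brack t-1}$ choices of $U$, and then complements of $T/U$ in $M/U$ (a space of dimension $a+k-t$, with $T/U$ of codimension $a$) number $q^{a(k-t)}$. Each $G\in\mathcal G$ contains its $W$, so $|\mathcal G|\le q^{a(k-t)}{k-1\brack t-1}\Delta_{a+t-1}(\mathcal F)$.

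\textbf{Part (iii).} The plan is to apply Proposition~\ref{degree} repeatedly from level $s$ up to level $k-1$, then bound $\Delta_{k-1}(\mathcal F)=r$ by ${a\brack 1}{k-t+1\brack1}^0$. For $s<k$, take $S$ achieving $\Delta_s$ and any $F\in\mathcal F[S]$. As in Lemma~\ref{2608182}(iii), minimality of $F$ produces $G\in\mathcal F$ with $\dim(S\cap G)<t$, and $G$ is a $t$-cover of dimension $k$. So $\Delta_s\le\Delta_{s+1}{k-t+1\brack 1}$, which iterates to $\Delta_s\le{k-t+1\brack1}^{k-1-s}\Delta_{k-1}$.

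\textbf{Main obstacle: $r\le{a\brack1}$.} The members of $\mathcal F[T]$ are $k$-subspaces of $M$ containing $T$, so they correspond to distinct $1$-subspaces of $M/T$, which has dimension $a$. Hence $r\le{a\brack 1}$. This is trivial once phrased this way; the only real subtlety in the whole lemma is the identity $W+T=M$ used in the count in (ii), and it holds by the dimension computation above.
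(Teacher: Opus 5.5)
Your proposal is correct and follows essentially the same route as the paper's proof: the same minimality argument for (i), the same inclusion $F=T+(F\cap G)$ with the same dimension count for (ii), and repeated application of Proposition~\ref{degree} combined with $\Delta_{k-1}(\mathcal F)\le{a\brack 1}$ for (iii). You also spell out two points the paper only asserts, namely the count of the $(a+t-1)$-subspaces $W$ as complements of $T/U$ in $M/U$ and the quotient argument giving $|\mathcal F[T]|\le{a\brack1}$; the one step you use without stating, $G+M=T+G$, follows at once from $T\subseteq M\subseteq T+G$.
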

\begin{proof}
	(i) Pick $F\in \mathcal{F}[T]$.  Since $F$ is a minimal $t$-cover of $\mathcal{F}$, there exists $G\in\mathcal{F}$ such that $\dim(G\cap T)<t$. It follows from  $\dim(F\cap G)\geq t$ that $\dim(G\cap T)=t-1$.
	
	(ii) Let $G\in\mathcal{G}$. For each $F\in\mathcal{F}[T]$, by $\dim(F\cap G)\geq t$, we get $F=T+(F\cap G)\subseteq T+G$. This implies $M\subseteq T+G$. We further conclude  $T+G=M+G$. Then
	$$\dim(G\cap M)=k+\dim(M)-\dim(T+G)=a+t-1.$$ 
	It follows that $$\left|\mathcal{G}\right|\leq \sum_{H\in{M\brack a+t-1},\ \dim(H\cap T)=t-1}\left|\mathcal{F}[H]\right|\leq q^{a(k-t)}{k-1\brack t-1}\Delta_{a+t-1}(\mathcal{F}).$$
	This completes the proof of (ii).
	
	(iii) Let $0\leq s\leq k-1$ and $S\in{V\brack s}$ with $\left|\mathcal{F}[S]\right|=\Delta_{s}(\mathcal{F})$. Since  $\mathcal{F}$ is non-empty, we know $S\subsetneq F$ for some $F\in\mathcal{F}$. Recall that $F$ is a minimal $t$-cover. Then $\dim(S\cap G)<t$ for some $G\in\mathcal{F}$. It follows from Proposition \ref{degree} that $\Delta_{s}(\mathcal{F})\leq \Delta_{s+1}(\mathcal{F}){k-t+1\brack 1}$.  
	
	Repeating the above procedure, we have $\Delta_{s}(\mathcal{F})\leq \Delta_{k-1}(\mathcal{F}){k-t+1\brack 1}^{k-s-1}$. This together with $\Delta_{k-1}(\mathcal{F})\leq {a\brack 1}$ yields (iii).
\end{proof}

\begin{Lemma}\label{2609091}
	Let $n\geq t+2\geq 3$ and $\mathcal{F}\subseteq{V\brack t+2}$. If each member of $\mathcal{F}$ is a minimal $t$-cover of $\mathcal{F}$, then $\left|\mathcal{F}\right|\leq {t+4\brack 2}$.  Moreover,  if equality holds, then $\mathcal{F}={M\brack t+2}$ for some $M\in{V\brack t+4}$. 
\end{Lemma}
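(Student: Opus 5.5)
The plan is to prove a structural statement: all members of $\mathcal{F}$ lie in a common $(t+4)$-subspace. Both the bound and the equality case then follow by counting. Write $k=t+2$.

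\textbf{Step 1: a pair spanning $t+4$ dimensions.} Fix $F\in\mathcal{F}$ and a hyperplane $H\in{F\brack t+1}$. Since $F$ is a minimal $t$-cover, $H$ is not a $t$-cover, so some $G\in\mathcal{F}$ has $\dim(G\cap H)\le t-1$. Because $\dim(F\cap G)\ge t$, this forces $\dim(F\cap G)=t$, and hence $\dim(F+G)=t+4$. Set $M=F+G$.

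\textbf{Step 2: the reduction.} Suppose every member of $\mathcal{F}$ lies in $M$. Any two $(t+2)$-subspaces of a $(t+4)$-space meet in dimension at least $t$. Hence $\mathcal{F}\subseteq{M\brack t+2}$, and
$$|\mathcal{F}|\le{t+4\brack t+2}={t+4\brack 2}.$$
Equality means $\mathcal{F}={M\brack t+2}$. Conversely, every member of ${M\brack t+2}$ is a minimal $t$-cover of ${M\brack t+2}$: a $(t+1)$-subspace $H$ of $M$ meets some $(t+2)$-subspace of $M$ in dimension $t-1$, which is possible since $(t+1)+(t+2)-(t+4)=t-1$. So the extremal configuration is consistent.

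\textbf{Step 3 (main obstacle): ruling out members outside $M$.} Suppose $E\in\mathcal{F}$ with $E\not\subseteq M$. Then $\dim(E\cap F)\ge t$ and $\dim(E\cap G)\ge t$, both inside $E\cap M$, which has dimension at most $t+1$. With $C=F\cap G$ of dimension $t$, I will analyse how $E\cap M$ sits relative to $C$.

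First I will try to show that such an $E$ pushes some proper subspace of a member to be a $t$-cover, contradicting minimality. The natural candidates are $E\cap M$ itself, or a $(t+1)$-subspace of $F$ spanned by $C$ and $E\cap F$. To do this I will re-run Step 1 with $E$ in place of $G$: $E$ must also pair with $F$ or $G$ to span a $(t+4)$-space $M'\ne M$. I will then compare the two witness configurations.

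If this direct structural argument fails, the fallback is counting. I will apply Lemma \ref{2609221} with $k=t+2$, taking $T$ of dimension $t+1$ of maximum degree and $a=\dim(\sum_{F\in\mathcal{F}[T]}F)-t-1$. Then I split $\mathcal{F}$ as $\mathcal{G}\cup(\mathcal{F}\setminus\mathcal{G})$:
\begin{itemize}
\item $|\mathcal{G}|$ is bounded by Lemma \ref{2609221}(ii);
\item each member of $\mathcal{F}\setminus\mathcal{G}$ meets $T$ in dimension at least $t$, so it is bounded by ${t+1\brack t}$ times the $t$-degree, which is controlled by Lemma \ref{2609221}(iii).
\end{itemize}
The aim is to show that, unless all members lie in one $(t+4)$-space, the total falls strictly below ${t+4\brack 2}$. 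The delicate case is $a=3$ with small $q$; I expect the structural argument of Step 3 to be needed exactly there.
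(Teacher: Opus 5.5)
Steps 1 and 2 are fine, but the target of Step 3 is false. A family of $(t+2)$-spaces in which every member is a minimal $t$-cover need not lie in any $(t+4)$-space, even in the case $a=3$ of Lemma~\ref{2609221}. Here is a counterexample. Let $t=1$, let $M$ be a $5$-space, let $L_1,L_2\le M$ be $2$-spaces with $L_1\cap L_2=0$, let $P$ be a $1$-space with $P\not\le M$, and set
\[
\mathcal F=\left\{F\in{M\brack 3}:F\cap L_1\neq 0\neq F\cap L_2\right\}\cup\{L_1+P,\ L_2+P\}.
\]
A member of a $t$-intersecting family is a minimal $t$-cover exactly when its $t$-dimensional intersections with other members span it. One checks this directly for every member here. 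For example, $L_1+P$ meets $L_2+P$ in $P$, and it meets suitable members inside $M$ in any prescribed point of $L_1$.

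Yet the members span the $6$-space $M+P$. Moreover, the $2$-spaces of maximum degree ${3\brack1}$ are exactly the transversals $T=\langle y_1,y_2\rangle$ with $y_i\in L_i$, and $\sum_{F\in\mathcal F[T]}F=M$. So $a=3$, and $L_1+P$, $L_2+P$ lie outside $M$.

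This is fatal to your plan, because $a=3$ is precisely where your fallback count breaks, and it breaks for every $q$, not only small $q$. There you get $|\mathcal G|\le q^6{t+1\brack2}$ and ${t+1\brack1}\Delta_t(\mathcal F)\le{t+1\brack1}{3\brack1}^2$. Their sum exceeds ${t+4\brack2}=q^6{t+1\brack2}+{3\brack1}\bigl(q^2{t+1\brack1}+1\bigr)$ by ${3\brack1}\bigl((q+1){t+1\brack1}-1\bigr)$. For $a\le2$ your crude count does suffice, since $q^{2a}{a\brack1}{3\brack1}^{2-a}\le q^6$ and $q+1\le q^2$.

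For $a=3$ you need an argument giving \emph{strict} inequality when $\mathcal D=\{F\in\mathcal F:F\not\subseteq M\}$ is non-empty, not one showing $\mathcal D=\emptyset$. Here $M=T+G$ for every $G\in\mathcal G$. Each $D\in\mathcal D$ satisfies $\dim(D\cap T)=\dim(D\cap G)=t$, $\dim(D\cap M)=t+1$ and $T\cap G\subseteq D$.

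First bound $|\mathcal D|$. For each $(t+1)$-space $H\le M$ with $\mathcal D[H]\neq\emptyset$, minimality gives some $F\in\mathcal D$ with $\dim(F\cap H)=t-1$. Then $D=H+(D\cap F)$ for all $D\in\mathcal D[H]$. There are ${2\brack1}{3\brack1}$ admissible $H$ and at most $q^2$ choices of $D\cap F$, so $|\mathcal D|\le q^2{2\brack1}{3\brack1}$.

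Next bound $|\mathcal F\setminus\mathcal D|$. Fix $X\in\mathcal D$; the non-cover $X\cap M$ yields $Y\in\mathcal D$ with $\dim(X\cap Y\cap M)=t-1$. Since the members of $\mathcal F\setminus\mathcal D$ lie in $M$, $X\cap M$ and $Y\cap M$ are two $(t+1)$-dimensional $t$-covers of $\mathcal F\setminus\mathcal D$ meeting in dimension $t-1$. Lemma~\ref{2608181} with $\Delta_{t+1}(\mathcal F)\le{3\brack1}$ then gives $|\mathcal F\setminus\mathcal D|\le{2\brack1}^2{3\brack1}+q^4{t-1\brack1}$.

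The total ${2\brack1}{3\brack1}^2+q^4{t-1\brack1}$ is below ${t+4\brack2}$. So equality forces $\mathcal D=\emptyset$, and only then does your Step 2 apply.
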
	
\begin{proof}
	We may assume that $\mathcal{F}$ is non-empty.
	Let $T,M,a$ and $\mathcal G$ be as in Lemma \ref{2609221}
	with $k=t+2$.
	By parts (i) and (ii), we have $\mathcal G\neq\emptyset$ and
	$t+2\leq\dim(M)\leq t+4$, so $1\leq a\leq3$.


	Suppose  $a\leq 2$. 
	Set
	$
	\mathcal H=\mathcal F\setminus\mathcal G$. It follows from $\dim(F\cap T)\geq t-1$ for any $F\in\mathcal{F}$ that
	$\mathcal{H}=\{F\in\mathcal F:\dim(F\cap T)\geq t\}$.	
	Note that ${a\brack1}<q^a$ and ${3\brack1}<q^3$. By Lemma \ref{2609221} (ii) and (iii),
	we get
	$$
	|\mathcal G|
	\leq q^{2a}{t+1\brack2}{a\brack1}{3\brack1}^{2-a}
	\leq q^6{t+1\brack2}.
	$$
	Fix $G\in\mathcal G$. Then $G$ and $T$ are $t$-covers
	of $\mathcal H$, with $\dim(G\cap T)=t-1$.
	By Lemma \ref{2609221} (iii), we have
	$\Delta_{t+1}(\mathcal F)\leq{a\brack1}\leq{2\brack1}$.
	This together with $\Delta_{t+2}(\mathcal F)=1$
	and Lemma \ref{2608181} yields
	$$
	|\mathcal H|
	\leq {3\brack1}{2\brack1}^2
	+q^{4}{3\brack1}{t-1\brack1}.
	$$
	Combining these bounds with Lemma \ref{2608201}, we obtain inequality $|\mathcal{F}|<{t+4\brack 2}$ holds.
	
	Suppose $a=3$. Then $\dim(M)=t+4$. By Lemma \ref{2609221} (ii), we know  $M=T+G$ for any $G\in\mathcal{G}$. 
	Set $\mathcal{D}=\left\{F\in\mathcal{F}: F\nsubseteq M\right\}$. 
	If $\mathcal{D}$ is empty, then
	$\mathcal{F}\subseteq{M\brack t+2}$, yielding
	$|\mathcal F|\leq{t+4\brack2}$,
	with equality if and only if $\mathcal F={M\brack t+2}$.
	Next, assume  $\mathcal{D}$ is non-empty, and show $|\mathcal{F}|<{t+4\brack 2}$.

	Choose  $G\in\mathcal{G}$.
	For each $D\in\mathcal{D}$, the definition of $\mathcal{D}$ gives $\dim(D\cap T)=t$ and $\dim(D\cap M)\leq t+1$, which implies $$t+1\geq \dim(D\cap (T+G))\geq \dim(D\cap T)+\dim(D\cap G)-\dim(D\cap T\cap G)\geq t+1.$$ Therefore, we have 
	\begin{equation}\label{2609222}
		\dim(D\cap T)=\dim(D\cap G)=t,\ \ \dim(D\cap M)=t+1, \ \ T\cap G\subseteq D \textnormal{ for any $D\in\mathcal{D}$.}
	\end{equation}

	Pick $H\in {M\brack t+1}$ such that $\mathcal{D}[H]$ is non-empty. Since each member of $\mathcal{F}$ is a minimal $t$-cover, we know $H$ is not a $t$-cover of $\mathcal{F}$. Then there exists $F\in\mathcal{F}$ such that $\dim(F\cap  H)=t-1$. Such a subspace $F$ must lie in $\mathcal{D}$ as $\mathcal{F}[H]\cup\{F\}$ is $t$-intersecting. Moreover, we know $\dim(D\cap F)=t$ and  $D=H+(D\cap F)$ for any $D\in\mathcal{D}[H]$.  It follows from (\ref{2609222}) that 
	$$\mathcal{D}[H]\subseteq \left\{H+S: S\in {F\brack t},\ T\cap G \subseteq S,\ \dim(S\cap M)=t-1 \right\}.$$
	By (\ref{2609222}), we have
	$
	\mathcal {D}=\cup_{H} \mathcal{D}[H],
	$
	where $H$ ranges over all $(t+1)$-subspaces of $M$ satisfying
	$\dim(H\cap T)=\dim(H\cap G)=t$ and $T\cap G\subseteq H$.
	Consequently, we know $|\mathcal{D}|\leq q^{2}{2\brack 1}{3\brack 1}$.
	
	Pick $X\in\mathcal{D}$. Note that $X\cap M$ is not a $t$-cover of $\mathcal{F}$. Similarly, there exists $Y\in\mathcal{D}$ such that $\dim(Y\cap X\cap M)=t-1$, i.e., $\dim((X\cap M)\cap (Y\cap M))=t-1$.  Since $F\cap X=F\cap X\cap M$ and $F\cap Y=F\cap Y\cap M$ for any $F\in\mathcal{F}\setminus \mathcal{D}$, we get $X\cap M$ and $Y\cap M$ are $t$-covers of $\mathcal{F}\setminus \mathcal{D}$ with dimension $t+1$. Consequently, Lemma \ref{2608181} yields $|\mathcal{F}\setminus \mathcal{D}|\leq {2\brack 1}^{2} \Delta_{t+1}(\mathcal{F})+q^{4}{t-1\brack 1}\Delta_{t+2}(\mathcal{F})$. By $\Delta_{t+1}(\mathcal{F})\leq {3\brack 1}$ and $\Delta_{t+2}(\mathcal{F})=1$, we know 
	$$\left|\mathcal{F}\right|\leq q^{2}{2\brack 1}{3\brack 1}+ {2\brack 1}^{2} {3\brack 1}+q^{4}{t-1\brack 1}={2\brack 1}{3\brack 1}^{2}+q^{4}{t-1\brack 1}.$$
	It follows from Lemma \ref{2608201} that $|\mathcal{F}|<{t+4\brack 2}$. This finishes the proof.
\end{proof}

A key step in applying Algorithm \ref{peeling} is the choice of the
initial fingerprint. To this end, for a $t$-intersecting family $\mathcal{F}\subseteq {V\brack k}$,  write
$$\mathcal{M}_{t}(\mathcal{F})=\left\{T \leqslant V:\dim(T)\leq k,\ \textnormal{$T$ is a minimal $t$-cover of $\mathcal{F}$} \right\}.$$  

\begin{Lemma} Let $k\geq t\geq 1$ and $n\geq 2k-t+1$. Suppose that $\mathcal{F}\subseteq {V\brack k}$ is a maximal $t$-intersecting family. The following hold. 
	\begin{itemize} \item[\normalfont(i)] $\mathcal{M}_{t}(\mathcal{F})$ is $t$-intersecting. \item[\normalfont(ii)] $\mathcal{M}_{t}(\mathcal{F})$ is a fingerprint of $\mathcal{F}$. 
	\end{itemize} 
\end{Lemma}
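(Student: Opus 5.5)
The plan is to first extract one consequence of maximality and then check parts (i) and (ii) directly from the definitions.

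\textbf{Key observation.} Let $T\in\mathcal{M}_t(\mathcal{F})$ and let $G\in{V\brack k}$ with $T\subseteq G$ (such $G$ exists since $\dim(T)\leq k\leq n$). For every $F\in\mathcal{F}$ we have $\dim(G\cap F)\geq\dim(T\cap F)\geq t$. Hence $\mathcal{F}\cup\{G\}$ is $t$-intersecting, and maximality of $\mathcal{F}$ forces $G\in\mathcal{F}$. So every $k$-subspace containing a member of $\mathcal{M}_t(\mathcal{F})$ lies in $\mathcal{F}$.

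\textbf{Part (i).} Suppose, for contradiction, that $T_1,T_2\in\mathcal{M}_t(\mathcal{F})$ satisfy $c:=\dim(T_1\cap T_2)\leq t-1$. Write $d_j=\dim(T_j)$. I will build $G_j\in{V\brack k}$ with $T_j\subseteq G_j$ and $G_1\cap G_2=T_1\cap T_2$, as follows.
\begin{itemize}
\item Since $\dim(T_1+T_2)=d_1+d_2-c$, the condition $n\geq 2k-t+1\geq 2k-c$ gives $n-\dim(T_1+T_2)\geq(k-d_1)+(k-d_2)$.
\item Choose subspaces $W_1,W_2$ of dimensions $k-d_1$ and $k-d_2$ such that the sum $(T_1+T_2)+W_1+W_2$ is direct.
\item Put $G_j=T_j+W_j$.
\end{itemize}
Then $\dim(G_1+G_2)=\dim(T_1+T_2)+\dim(W_1)+\dim(W_2)=2k-c$, so $\dim(G_1\cap G_2)=c<t$. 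By the key observation both $G_1$ and $G_2$ lie in $\mathcal{F}$, which contradicts $\mathcal{F}$ being $t$-intersecting. Hence $\mathcal{M}_t(\mathcal{F})$ is $t$-intersecting. The only point needing care is this dimension count, which is exactly where the hypothesis $n\geq 2k-t+1$ is used.

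\textbf{Part (ii).} By (i), $\mathcal{M}_t(\mathcal{F})$ is a $t$-intersecting subfamily of ${V\brack\leq k}$, so it remains to verify the three fingerprint conditions.
\begin{itemize}
\item \emph{Condition (i).} Each $T\in\mathcal{M}_t(\mathcal{F})$ lies in some $k$-subspace $G\supseteq T$, and $G\in\mathcal{F}$ by the key observation.
\item \emph{Condition (ii).} Each $F\in\mathcal{F}$ is itself a $t$-cover of $\mathcal{F}$, because $\mathcal{F}$ is $t$-intersecting, and $\dim(F)=k$. Taking a minimal $t$-cover of $\mathcal{F}$ contained in $F$ produces a member of $\mathcal{M}_t(\mathcal{F})$ inside $F$.
\item \emph{Condition (iii).} Each $T\in\mathcal{M}_t(\mathcal{F})$ is a $t$-cover of $\mathcal{M}_t(\mathcal{F})$ by part (i). Suppose a proper subspace $S\subsetneq T$ were a $t$-cover of $\mathcal{M}_t(\mathcal{F})$. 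Every $F\in\mathcal{F}$ contains some $T'\in\mathcal{M}_t(\mathcal{F})$ by condition (ii), so $\dim(S\cap F)\geq\dim(S\cap T')\geq t$. Thus $S$ would be a $t$-cover of $\mathcal{F}$, contradicting the minimality of $T$ as a $t$-cover of $\mathcal{F}$.
\end{itemize}
This completes the verification that $\mathcal{M}_t(\mathcal{F})$ is a fingerprint of $\mathcal{F}$.
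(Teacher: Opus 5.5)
Your part (ii) is correct and is essentially the paper's argument (your condition (iii) is the contrapositive of the paper's). The gap is in part (i), at the dimension count you flagged as the crucial step: the inequality $2k-t+1\geq 2k-c$ is backwards. Since $c\leq t-1$, one has $2k-c\geq 2k-t+1$, with equality only when $c=t-1$. So for $c\leq t-2$ the hypothesis $n\geq 2k-t+1$ does not give $n\geq 2k-c$, and the direct sum $(T_1+T_2)+W_1+W_2$ need not exist. For instance, take $t=3$, $k=4$, $n=6$ and $\dim T_1=\dim T_2=3$ with $T_1\cap T_2=0$. Then $T_1+T_2=V$ already, so there is no room for the two $1$-dimensional spaces $W_1,W_2$. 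Your contradiction argument must rule out such configurations, and as written it does not.

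The repair is easy, because the sum does not need to be direct. Enlarge $T_1$ and $T_2$ one vector at a time until both have dimension $k$. Choose each new vector outside the current $G_1+G_2$ whenever this is a proper subspace of $V$, and otherwise just outside the space being enlarged. This gives $\dim(G_1+G_2)=\min\{n,2k-c\}$, hence $\dim(G_1\cap G_2)=\max\{c,2k-n\}\leq t-1$; this is where $n\geq 2k-t+1$ genuinely enters.

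The paper avoids the second space altogether. It takes $A\subseteq T_2$ with $\dim A=\dim(T_2)-t+1$ and $A\cap T_1=0$, and a complement $U\supseteq T_1$ of $A$. Then $\dim U=n-\dim(T_2)+t-1\geq k$, so there is a $k$-space $F$ with $T_1\subseteq F\subseteq U$. Maximality puts $F$ in $\mathcal F$, while $F\cap A=0$ forces $\dim(F\cap T_2)\leq t-1$. This contradicts the fact that $T_2$ is a $t$-cover of $\mathcal F$. Using the cover property of $T_2$ directly, rather than the $t$-intersection between two newly added members, makes the dimension count one-sided and avoids the issue you ran into.
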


\begin{proof}
	(i) Suppose for contradiction that $\dim(T_{1}\cap T_{2})\leq t-1$ for some $T_{1},T_{2}\in\mathcal{M}_{t}(\mathcal{F})$. Choose $A\in{T_{2}\brack \dim(T_{2})-t+1}$ with $\dim(A\cap T_{1})=0$. Pick a subspace $U$ of $V$ such that $V=A\oplus U$ and $T_{1}\subseteq U$. Since
	$
	\dim(U)=n-\dim(T_{2})+t-1\geq k
	$, 
	there exists $F\in{U\brack k}$ with $T_{1}\subseteq F$. Since $T_{1}$ is a $t$-cover of $\mathcal{F}$, the maximality of $\mathcal{F}$ yields $F\in\mathcal{F}$. However, by $\dim(F\cap A)=0$ and $A\subseteq T_{2}$, we have
	$$
	0=\dim(F\cap A)=\dim(F\cap T_{2})+\dim(A)-\dim((F\cap T_{2})+A)\geq \dim(F\cap T_{2})-t+1.
	$$
	This implies $\dim(F\cap T_{2})\leq t-1$, a contradiction. Hence $\mathcal{M}_{t}(\mathcal{F})$ is $t$-intersecting.
	
	(ii) For each $T\in\mathcal{M}_{t}(\mathcal{F})$, choose $F\in{V\brack k}$ with $T\subseteq F$. Since $T$ is a $t$-cover of $\mathcal{F}$, the maximality of $\mathcal{F}$ gives $F\in\mathcal{F}$. Conversely, each member of $\mathcal{F}$ is a $t$-cover of $\mathcal{F}$, and hence contains some member of $\mathcal{M}_{t}(\mathcal{F})$.
	
	It remains to show that each member of $\mathcal{M}_{t}(\mathcal{F})$ is a minimal $t$-cover of $\mathcal{M}_{t}(\mathcal{F})$. Pick $T\in\mathcal{M}_{t}(\mathcal{F})$. By part (i), $T$ is a $t$-cover of $\mathcal{M}_{t}(\mathcal{F})$. For any proper subspace $H$ of $T$, the minimality of $T$ gives some $G\in\mathcal{F}$ with $\dim(H\cap G)<t$. By the preceding paragraph, there exists $S\in\mathcal{M}_{t}(\mathcal{F})$ with $S\subseteq G$. Then
	$
	\dim(H\cap S)\leq\dim(H\cap G)<t,
	$
	so $H$ is not a $t$-cover of $\mathcal{M}_{t}(\mathcal{F})$. Therefore $T$ is a minimal $t$-cover of $\mathcal{M}_{t}(\mathcal{F})$, and  $\mathcal{M}_{t}(\mathcal{F})$ is a fingerprint of $\mathcal{F}$.
\end{proof}

\section{Proof of Theorem \ref{main}}\label{s3}

We first fix the notation for the rest of this section. 
  Let $k\geq t+2$ and $n\geq 2k+1$  be positive integers, and $V$ be an $n$-dimensional vector space over the finite field $\mathbb{F}_{q}$. Suppose that $\mathcal{F}\subseteq {V\brack k}$ is a maximal  $t$-intersecting family with $\tau:=\tau_{t}(\mathcal{F})\geq t+1$.   Apply  Algorithm \ref{peeling} to $\mathcal{F}$ with $\mathcal{S}_{0}=\mathcal{M}_{t}(\mathcal{F})$, and denote the output by $N$ and $(\mathcal{S}_{i},\mathcal{X}_{i})$ for $0\leq i\leq N$.

By Lemma \ref{2608182} (ii) and the fact that $\mathcal S_N=\mathcal X_N$, we have
\begin{equation}\label{equdecomp}
\mathcal F
=\left(\bigcup_{i=0}^{N-1}\mathcal F[\mathcal X_i]\right)
\cup\mathcal F[\mathcal S_N].
\end{equation}
We estimate separately the contributions of the \emph{early layers}
$\mathcal F[\mathcal X_i]$, $0\le i<N$, and the \emph{final layer}
$\mathcal F[\mathcal S_N]$.
To record the bounds for  early layers, we introduce
the following two functions.
\begin{align}
		\Phi_{1}(n,k,t,a,b,j)&:=\varphi(k,t,a,k,0)
	+\sum_{i=1}^{j}\psi(k,t,b,i){n-k+i\brack i},	\label{2609115}\\
	\Phi_{2}(n,k,t,a,j)&:=\varphi(k,t,a,k,0)
	+\sum_{i=1}^{j}\psi(k,t,k-i-1,i){n-k+i\brack i},
	\label{2609113}
\end{align}
where we  adopt the convention that  an empty sum is equal to $0$. Write $a(n,k,t)=|\mathcal A(Z)|$ and
$h(n,k,t)=|\mathcal H(X,M)|$; these sizes are independent
of the choice of the defining subspaces.  We further let 
$m(n,k,t)=\max\{a(n,k,t), h(n,k,t)\}$.

 Note that the stopping index $N$ satisfies $0\leq N\leq k-t$.
 The proof is divided into the following four cases:
 $N=0$, $1\leq N\leq k-t-2$, $N=k-t-1$ and $N=k-t$.

\subsection{$N=0$}

In this situation, the initial fingerprint consists entirely of
$k$-subspaces. In particular, $\tau=k$. 
We choose a $(k-1)$-subspace of maximum degree and split
$\mathcal F$ according to whether its members meet this
subspace in dimension $t-1$ or at least $t$.

\begin{Lemma}\label{2609122}
Suppose that
	$N=0$. Then
	$|\mathcal F|<m(n,k,t)$.
\end{Lemma}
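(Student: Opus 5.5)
When $N=0$ we have $\mathcal S_0=\mathcal X_0=\mathcal M_t(\mathcal F)\subseteq{V\brack k}$. Since $\mathcal M_t(\mathcal F)$ is a fingerprint and every member of $\mathcal F$ contains one of its members, which is already $k$-dimensional, we get $\mathcal F=\mathcal S_0$. So $\mathcal F$ is a family of $k$-subspaces, each of which is a minimal $t$-cover of $\mathcal F$, and Lemma \ref{2609221} applies to $\mathcal F$ itself.

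Choose $T\in{V\brack k-1}$ of maximum degree, and let $M$, $a=\dim(M)-k+1$ and $\mathcal G$ be as in Lemma \ref{2609221}. By part (i), $\mathcal G\neq\emptyset$. By part (ii), $\dim(M)\le\dim(T+G)=2k-t$, so $1\le a\le k-t+1$. Split $\mathcal F=\mathcal G\cup\mathcal H$, where
\[
\mathcal H=\{F\in\mathcal F:\dim(F\cap T)\ge t\};
\]
every member meets $T$ in dimension at least $t-1$, so this is a genuine partition.

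\emph{Bounding $\mathcal G$.} Combining Lemma \ref{2609221}(ii) with (iii) at $s=a+t-1$ gives
\[
|\mathcal G|\le q^{a(k-t)}{k-1\brack t-1}{a\brack1}{k-t+1\brack1}^{k-t-a}.
\]

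\emph{Bounding $\mathcal H$.} Fix $G\in\mathcal G$. Then $T$ and $G$ are both $t$-covers of $\mathcal H$ with $\dim(T\cap G)=t-1$. Lemma \ref{2608181} with $c=t-1$, $\dim(A)=k-1$, $\dim(B)=k$ bounds $|\mathcal H|$ by a sum of the form
\[
\sum_j q^{2j(j+1)}{t-1\brack j}{k-t\brack j+1}{k-t+1\brack j+1}\Delta_{t+1+j}(\mathcal F),
\]
and each degree is bounded via Lemma \ref{2609221}(iii) by ${a\brack1}{k-t+1\brack1}^{k-t-2-j}$.

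The total is then a polynomial-type bound in $q$ of degree roughly $(k-t)(t+1)+O(k)$ that does \emph{not} depend on $n$. By contrast, $m(n,k,t)\ge a(n,k,t)$ is of order ${t+2\brack 1}{n-t-1\brack k-t-1}$, which grows with $n$. Since $n\ge 2k+1$, it suffices to compare at $n=2k+1$, using the monotonicity of $m$ in $n$.

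The main obstacle is this final numerical comparison for small $q$ (namely $q=2$) and $n=2k+1$. There the naive bounds, especially the $a=k-t+1$ term in $|\mathcal G|$ and the $j=t-1$ term in $|\mathcal H|$, may come close to $m(2k+1,k,t)$.

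I would first try the crude estimates ${b\brack1}<q^b\cdot\frac{q}{q-1}$ and ${a\brack b}\le c_q q^{b(a-b)}$, and compare exponents. The exponent of $m(2k+1,k,t)$ is about $(k-t-1)(k+2)+t+1$, which exceeds $(k-t)(t+1)+(k-t)+\dots$ only when $k$ is somewhat larger than $t$. For $k$ close to $t$ (say $k=t+2$), I would instead use the sharper structure: when $a$ is maximal, $M=T+G$ has dimension $2k-t$. Then I would treat separately the members inside $M$ (at most ${2k-t\brack k}$ of them) and the members outside $M$, mimicking the proof of Lemma \ref{2609091}; for $k=t+2$ that lemma gives $|\mathcal F|\le{t+4\brack2}$ directly, and this is less than $a(2k+1,k,t)$. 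These comparisons would be deferred to appendix-style inequalities analogous to Lemma \ref{2608201}.
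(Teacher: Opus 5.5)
Your reduction is the same as the paper's: $\mathcal F=\mathcal S_0$, the split $\mathcal F=\mathcal G\cup\mathcal H$ via Lemma \ref{2609221}, Lemma \ref{2608181} applied to the covers $T$ and $G$ of $\mathcal H$, and Lemma \ref{2609091} for $k=t+2$. There is one small slip. For $j=k-t-1$ (a term present when $k\le 2t$), your degree bound ${a\brack1}{k-t+1\brack1}^{k-t-2-j}={a\brack1}/{k-t+1\brack1}$ falls below $\Delta_k(\mathcal F)=1$ when $a\le k-t$, because Lemma \ref{2609221}(iii) does not cover $s=k$. The paper instead uses $\Delta_s(\mathcal F)\le{k-t+1\brack1}^{k-s}$ for all $s\le k$. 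It also bounds $|\mathcal G|\le q^{(k-t)(k-t+1)}{k-1\brack t-1}$, so that both bounds are independent of $a$.

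The genuine gap is the final inequality for $k\ge t+3$. This is the actual content of the lemma (Lemma \ref{2609193} in the paper), and the route you sketch for it cannot work, because your degree count is wrong.

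Put $r=k-t$. The bound $q^{r(r+1)}{k-1\brack t-1}$ for $|\mathcal G|$ alone has $q$-degree $r(r+1)+r(t-1)=rk$, not $(k-t)(t+1)+O(k)$. The degree of $a(2k+1,k,t)$ is $(t+1)+(r-1)(k+1)=rk$; your factor $(k+2)$ should be $(k+1)$. The degree of $h(2k+1,k,t)$ is $rk+r-t-1$.

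So for $k\le 2t+1$ both sides have exactly the same degree, and for $k=2t+2$ the margin is a single factor of $q$. The inequality is therefore a comparison of leading constants, not of exponents. Estimates such as ${b\brack1}<\frac{q}{q-1}q^b$ or ${a\brack b}\le c_q q^{b(a-b)}$ lose a constant factor each time they are used at $q=2$, and they overshoot. Concretely, for $(q,k,t,n)=(2,5,2,11)$ the bound is $2^{12}\cdot 15+23625+58800=143865$, against $m(11,5,2)=a(11,5,2)=160147$, which leaves only about a $10\%$ margin.

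What is missing is an exact term-by-term comparison, which the paper carries out as follows.
\begin{itemize}
\item It expands $q^{r(r+1)}{k-1\brack t-1}=\sum_i q^{r(r+1)+i^2}{t-1\brack i}{r\brack i}$ by $q$-Vandermonde.
\item It pairs the $i$-th summand with the $i$-th summand of the $\mathcal H$-bound.
\item It compares each pair with a matching $q$-Vandermonde expansion of a lower bound for $m$. For $k\ge 2t+2$ this lower bound counts the members of $\mathcal H(X,M)$ meeting $M$ in dimension exactly $t+1$. For $k\le 2t+1$ it counts the members of $\mathcal A(Z)$ meeting $Z$ in dimension exactly $t+1$.
\item The pairwise comparisons use the inequalities \eqref{2609191} and \eqref{2609192}, and the leftover top-index terms $A_{r-1},A_r,B_{r-1}$ are treated separately.
\end{itemize}
Your suggestion to imitate Lemma \ref{2609091} for other small values of $r$ is not needed. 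Without a sharp comparison of this kind, however, the case $k\ge t+3$ remains unproved.
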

\begin{proof}
	
	Suppose $k=t+2$. By Lemma \ref{2609091}, we have $\left|\mathcal{F}\right|\leq {t+4\brack 2}$. Note that 
	$$a(2t+5,t+2, t)-{t+4\brack 2}=\frac{q^{3}(q^{t+1}-1)(q^{t+2}-1)}{(q^{2}-1)(q-1)}>0.$$
	Since $n\geq 2t+5$ and $a(n,t+2,t)$ is increasing in \(n\), the desired result follows. Next, we assume $k\geq t+3$.
	
	Since $\tau=k$, every member of $\mathcal F$
	is a minimal $t$-cover of $\mathcal F$.
	Let $T,M,a$ and $\mathcal G$ be as in Lemma \ref{2609221}.
	By parts (i) and (ii), $\mathcal G\neq\emptyset$ and
	$k\leq\dim(M)\leq 2k-t$, so $1\leq a\leq k-t+1$.
	
	If $a\leq k-t$, then by Lemma \ref{2609221} (ii) and (iii),
	we have
	$$
	|\mathcal G|
	\leq q^{a(k-t)}{k-1\brack t-1}{a\brack1}
	{k-t+1\brack1}^{k-t-a}
	\leq q^{(k-t)(k-t+1)}{k-1\brack t-1},
	$$
	where the last inequality follows from
	${a\brack1}<q^a$ and
	${k-t+1\brack1}<q^{k-t+1}$.
	If $a=k-t+1$, the same upper bound follows directly from
	Lemma \ref{2609221} (ii) and $\Delta_k(\mathcal F)=1$.
	
	Set $\mathcal{H}=\mathcal{F}\setminus\mathcal{G}$. Since 
	every $F\in\mathcal F$ satisfies $\dim(F\cap T)\geq t-1$, we know
	$
	\mathcal{H}
	=\{F\in\mathcal{F}:\dim(F\cap T)\geq t\}.
	$
	Fix $G\in\mathcal G$. Then $G$ and $T$ are $t$-covers
	of $\mathcal{H}$, with $\dim(G\cap T)=t-1$.
	By Lemma \ref{2609221} (iii) and $a\leq k-t+1$, for any $0\leq s\leq k$, we have	
	$\Delta_s(\mathcal{H})
	\leq\Delta_s(\mathcal F)
	\leq{k-t+1\brack1}^{k-s}$,
	where the case $s=k$ follows from $\Delta_k(\mathcal F)=1$.
	Applying Lemma \ref{2608181}, we obtain
	$$
	|\mathcal{H}|
	\leq
	\sum_{i=0}^{t-1}
	q^{2i(i+1)}
	{t-1\brack i}
	{k-t+1\brack i+1}
	{k-t\brack i+1}
	{k-t+1\brack1}^{k-t-i-1}.
	$$
	Combining the bounds for $\mathcal{G}$ and $\mathcal{H}$
	and applying Lemma \ref{2609193} gives the desired bound.
\end{proof}

\subsection{$1\leq N\leq k-t-2$}

For $N=k-t-2\geq 1$, the final fingerprint consists of
$(t+2)$-subspaces, and Lemma \ref{2609091} gives the required bound
on its size.
Finally, when $1\le N\le k-t-3$, direct estimates suffice, although
the numerical comparisons are lengthy.

\begin{Lemma}
Suppose that $1\leq N=k-t-2$. Then 
	$
	\left|\mathcal{F}\right|<m(n,k,t).
	$
\end{Lemma}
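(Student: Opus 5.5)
We use the decomposition (\ref{equdecomp}) with $N=k-t-2\geq 1$ and bound the early layers and the final layer separately.

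\textbf{Final layer.} By Lemma \ref{2608182} (i) and Lemma \ref{2609056}, $\mathcal S_N$ consists of $(t+2)$-subspaces and is a fingerprint of $\mathcal S_{N-1}\setminus\mathcal X_{N-1}$. Hence every member of $\mathcal S_N$ is a minimal $t$-cover of $\mathcal S_N$, and Lemma \ref{2609091} gives $|\mathcal S_N|\leq {t+4\brack 2}$. Each member of $\mathcal S_N$ lies in at most ${n-t-2\brack k-t-2}$ members of $\mathcal F$, so
$$|\mathcal F[\mathcal S_N]|\leq {t+4\brack 2}{n-t-2\brack k-t-2}.$$
This is the natural analogue of the bound used in the case $N=0$.

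\textbf{Early layers.} For $0\leq i\leq N-1$, each $X\in\mathcal X_i$ has dimension $k-i$, so $|\mathcal F[X]|\leq {n-k+i\brack i}$. Lemma \ref{2609061} bounds $|\mathcal X_i|$.
\begin{itemize}
\item[(a)] For $i=0$: $\mathcal S_0=\mathcal M_t(\mathcal F)$ contains a $\tau$-dimensional cover, so $m_0\leq\tau$, and by Lemma \ref{2609061} (ii) also $m_0\leq k-1$. This gives $|\mathcal X_0|\leq\varphi(k,t,m_0,k,0)$.
\item[(b)] For $1\leq i\leq N-1$: $|\mathcal X_i|\leq\psi(k,t,k-i-1,i)$, using $m_i\leq k-i-1$ from Lemma \ref{2609061} (ii).
\item[(c)] For $i=N-1$: Lemma \ref{2609056} gives the sharper bound $|\mathcal X_{N-1}|\leq\varphi(k,t,k-N,k-N,N-1)$. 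We use it in place of (b), since the last early layer is the one with the largest multiplier ${n-k+i\brack i}$.
\end{itemize}
Summing (a)–(c) gives a total early-layer bound of at most $\Phi_2(n,k,t,k-1,N-1)$, with the $i=N-1$ term replaced by the bound from (c).

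\textbf{Comparison.} It remains to show
$$\Phi_2(n,k,t,k-1,N-1)+{t+4\brack 2}{n-t-2\brack k-t-2}<m(n,k,t),$$
with the $(N-1)$ term of $\Phi_2$ adjusted as in (c). It suffices to compare with $a(n,k,t)$ when $k\leq 2t+1$ and with $h(n,k,t)$ when $k\geq 2t+2$. Both are roughly ${t+2\brack 1}{n-t-1\brack k-t-1}$ or $q^{\Theta(k)}{n-t-1\brack k-t-1}$.

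The final-layer term is smaller than this by a factor of about $q^{-(n-k)}$ times ${t+4\brack 2}/{t+2\brack 1}$, which is comfortably small for $n\geq 2k+1$. The early-layer sum is dominated by its top term, $i=N-1=k-t-3$. There ${n-t-3\brack k-t-3}$ loses a further factor $q^{n-k}$ relative to ${n-t-1\brack k-t-1}$, while $\varphi$ contributes roughly ${k-t-2\brack 1}^{\cdot}q^{O(t\cdot)}$.

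\textbf{Main obstacle.} The hardest part is this numerical verification in the tight case $n=2k+1$, $q=2$, where the factor $q^{n-k}=2^{k+1}$ must absorb ${t+4\brack 2}$ and the $\varphi$ factors. I would first reduce to $n=2k+1$ by a monotonicity-in-$n$ argument: each term on the left grows no faster in $n$ than $m(n,k,t)$. I would then bound each term by a geometric series in $q^{-1}$, using ${a\brack b}\leq \frac{q^{b(a-b)}}{\prod(1-q^{-j})}$. This reduces the claim to an explicit inequality in $k$ and $t$, which I would defer to an appendix-style estimate in the spirit of Lemmas \ref{2608201} and \ref{2609193}. Small cases such as $k=t+3$ (so $N=1$ and only $\mathcal X_0$ appears) may need to be checked directly.
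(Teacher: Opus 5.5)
Your set-up is sound: the decomposition \eqref{equdecomp}, Lemma \ref{2609091} for $|\mathcal S_N|\le{t+4\brack 2}$, and Lemma \ref{2609061} for the early layers. For $k=t+3$ your bound coincides with the paper's, since then $m_0=\tau=t+2$, and Lemma \ref{2609062} finishes that case.

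The gap is the case $k\ge t+4$. There you bound $\mathcal X_0$ by the worst case $\varphi(k,t,k-1,k,0)$, and independently bound each $W\in\mathcal S_N$ by the trivial degree ${n-t-2\brack k-t-2}$. This pairs the extreme $\tau=k-1$ for the first layer with the extreme $\tau=t+2$ for the last. The resulting inequality is not proved in your proposal, and it is not the one any estimate in the paper establishes.

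Your heuristics for it are also off.
\begin{itemize}
\item[(a)] The dominant early term is $i=0$, not $i=N-1$. For $q=2$, $n=2k+1$ the quotient $\varphi(k,t,k-1,k,0)/\bigl({x\brack1}{n-t-1\brack k-t-1}\bigr)$ can exceed $1/2$ (about $0.55$ at $(q,k,t,n)=(2,5,1,11)$). The $i=N-1$ term is only a few percent.
\item[(b)] The final-layer term is not ``comfortably small''. At $(q,k,t,n)=(2,7,3,15)$ it is about $0.29$ of ${x\brack1}{n-t-1\brack k-t-1}$.
\item[(c)] The appendix's general tools do not close your inequality. At these parameters, Lemma \ref{2609153} gives $315/512$ for $\mathcal X_0$, and \eqref{2609171} gives about $0.35$ for your final-layer term. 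The sum already exceeds $1-1/(3\cdot 2^t)$ before the other early layers are added.
\end{itemize}
The exact values there (about $0.72$ against $m/\bigl({x\brack1}{n-t-1\brack k-t-1}\bigr)\approx 0.997$) suggest your inequality is true. Proving it in general would still need new, sharper estimates, and that is exactly the part you defer.

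The missing idea is to couple the two terms through $\tau$. Since $\mathcal S_0=\mathcal M_t(\mathcal F)$, in fact $m_0=\tau$. When $\tau\ge t+3$, no $(t+2)$-subspace is a $t$-cover of $\mathcal F$, so Corollary \ref{key} gives
\[
|\mathcal F[W]|\le {k-t+1\brack1}^{\tau-t-2}{n-\tau\brack k-\tau}\quad\text{for every } W\in\mathcal S_N.
\]
This bound decays in $\tau$ exactly as $\varphi(k,t,\tau,k,0)$ grows. The paper therefore obtains
\[
|\mathcal F|\le \Phi_2(n,k,t,\tau,k-t-3)+{t+4\brack2}{k-t+1\brack1}^{\tau-t-2}{n-\tau\brack k-\tau}.
\]
It then applies Lemma \ref{2609216}, the $U_3$ case with $a=\tau$. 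Via $F(n,k,t,a;C)\le F(2k+1,k,t,k-1;C)$, the worst case is $a=k-1$, where the final-layer contribution is small. With this coupling, the sharpening at $i=N-1$ in your step (c) is not needed.
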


\begin{proof}
	
	Recall that $\mathcal{S}_{k-t-2}=\mathcal{X}_{k-t-2}\subseteq {V\brack t+2}$. Since each member of $\mathcal{S}_{k-t-2}$ is a minimal $t$-cover of $\mathcal{S}_{k-t-2}$, Lemma \ref{2609091} gives
	$\left|\mathcal{S}_{k-t-2}\right|\leq {t+4\brack 2}$.
	
	From Lemma \ref{2609061} and $m_{k-t-2}=t+2$, we obtain
	$
	t+2\leq \tau=m_{0}\leq k-1$. For $W\in\mathcal S_{k-t-2}$, we have
	$|\mathcal F[W]|\le{k-t+1\brack1}^{\tau-t-2}
	{n-\tau\brack k-\tau}$
	by Corollary~\ref{key} when $\tau\ge t+3$,
	and by direct counting when $\tau=t+2$.
	Summing over $W$ gives
	\begin{equation*}
		\begin{aligned}
			\left|\mathcal{F}[\mathcal{X}_{k-t-2}]\right|
			\leq\sum_{W\in\mathcal{S}_{k-t-2}}\left|\mathcal{F}[W]\right|
			\leq {t+4\brack 2}
			{k-t+1\brack 1}^{\tau-t-2}
			{n-\tau\brack k-\tau}.
		\end{aligned}
	\end{equation*}
	
	Suppose $k=t+3$. Then $N=1$ and $\tau=t+2$. By Lemma \ref{2609056}, we have $
	\left|\mathcal{X}_{0}\right|\leq\varphi(k,t,t+2,t+2,0)$. Substituting the two bounds into \eqref{equdecomp} and applying
	Lemma~\ref{2609062}  gives
	$$
	\left|\mathcal{F}\right|
	\leq\varphi(t+3,t,t+2,t+2,0)+{t+4\brack 2}{n-t-2\brack 1}
	<m(n,t+3,t).
	$$
	
	Suppose $k\geq t+4$. We use Lemma~\ref{2609061} (i)--(iii) to bound the
	early layers $\bigcup_{i\leq k-t-3}\mathcal{F}[\mathcal{X}_i]$ by $\Phi_2(n,k,t,\tau,k-t-3)$. It follows that
	\begin{equation*}
		\left|\mathcal{F}\right|\leq\Phi_{2}(n,k,t,\tau,k-t-3)+{t+4\brack 2}
		{k-t+1\brack 1}^{\tau-t-2}
		{n-\tau\brack k-\tau}.
	\end{equation*}
	By Lemma \ref{2609216}, the desired result holds.
\end{proof}
\begin{Lemma}\label{2609231}
Suppose $1\leq N\leq k-t-3$. Then
	$
	\left|\mathcal{F}\right|<m(n,k,t).
	$
\end{Lemma}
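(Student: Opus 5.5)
We follow the template of the case $N=k-t-2$: decompose $\mathcal F$ via \eqref{equdecomp} into the early layers $\mathcal F[\mathcal X_i]$, $0\le i\le N-1$, and the final layer $\mathcal F[\mathcal S_N]$. We bound each part by degree estimates and then compare the total with $m(n,k,t)$ through a numerical lemma of the appendix.

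\textbf{Early layers.} Since $\mathcal S_0=\mathcal M_t(\mathcal F)$ contains every minimal $t$-cover of dimension at most $k$, it contains a member of $\mathcal T_t(\mathcal F)$ (note $\tau\le k$). Hence Lemma \ref{2609061} (iii) gives $m_i\le\tau$ for all $i$, and in particular $m_0=\tau$. We treat the layer $i=0$ separately.
\begin{itemize}
\item[(a)] \emph{The layer $i=0$.} Every member of $\mathcal X_0$ is a $k$-subspace of $\mathcal F$ itself, so Lemma \ref{2609061} (i) gives
$|\mathcal F[\mathcal X_0]|=|\mathcal X_0|\le\varphi(k,t,\tau,k,0)$.
\item[(b)] \emph{The layers $1\le i\le N-1$.} For $X\in\mathcal X_i$ we have $\dim X=k-i$, so $|\mathcal F[X]|\le{n-k+i\brack i}$. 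Moreover $|\mathcal X_i|\le\psi(k,t,m_i,i)\le\psi(k,t,k-i-1,i)$, using $m_i\le k-i-1$ from Lemma \ref{2609061} (ii). If we also want $\tau$ to enter, we can use $m_i\le\min\{\tau,k-i-1\}$.
\end{itemize}
Summing (a) and (b), the early layers contribute at most $\Phi_2(n,k,t,\tau,N-1)$, or $\Phi_1(n,k,t,\tau,\tau,N-1)$ when $\tau\le k-N$ is the better bound.

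\textbf{Final layer.} Here $\mathcal S_N=\mathcal X_N\subseteq{V\brack k-N}$, and every member is a minimal $t$-cover of $\mathcal S_N$. Since $N\le k-t-3$, we have $k-N\ge t+3$. We bound $|\mathcal S_N|$ in one of two ways.
\begin{itemize}
\item[(a)] Apply Lemma \ref{2609056} together with the observation that $\mathcal S_N$ itself satisfies the hypotheses of Lemma \ref{2609221}. This gives $|\mathcal S_N|\le\varphi(k,t,k-N,k-N,N)\le\psi(k,t,k-N,N)$, by repeating the argument of Lemma \ref{2609061} (i).
\item[(b)] Alternatively, run the $\mathcal G$/$\mathcal H$ split of Lemma \ref{2609122} with $k$ replaced by $k-N$.
\end{itemize}
For $W\in\mathcal S_N$ of dimension $k-N\le\tau$, Corollary \ref{key} gives
$$|\mathcal F[W]|\le{k-t+1\brack1}^{\tau-k+N}{n-\tau\brack k-\tau}$$
when $k-N<\tau$, and $|\mathcal F[W]|\le{n-\tau\brack k-\tau}$ when $k-N=\tau$. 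This yields
$$|\mathcal F|\le\Phi_2(n,k,t,\tau,N-1)+\psi(k,t,k-N,N){k-t+1\brack1}^{\tau-k+N}{n-\tau\brack k-\tau},$$
where $t+3\le k-N\le\tau\le k-1$. The bound $\tau\le k-1$ holds because $N\ge1$ forces $m_0\le k-1$.

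\textbf{Main obstacle.} The hard step is the final numerical comparison with $m(n,k,t)$, uniformly over $n\ge2k+1$, all $q$ (especially $q=2$, $n=2k+1$), all admissible $\tau$, and all $N$. The plan is to show that the right-hand side is maximised at an extreme value of $\tau$: either $\tau=k-N$, which gives a factor ${n-k+N\brack N}$, or $\tau=t+3$. This uses the ratio ${n-\tau\brack k-\tau}/{n-\tau-1\brack k-\tau-1}\approx q^{n-k}$, which dominates ${k-t+1\brack1}\approx q^{k-t+1}$ since $n-k\ge k+1$. The geometric-type sums in $\Phi_2$ are then controlled by their largest term. Finally, each resulting expression is compared with $h(n,k,t)$ when $k\ge2t+2$ and with $a(n,k,t)$ when $k\le2t+1$, using crude bounds such as ${a\brack b}<\prod(\text{const})\,q^{b(a-b)}$ with the constant $\prod_j(1-2^{-j})^{-1}<3.5$. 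This is the lengthy computation I expect to defer to an appendix lemma analogous to Lemma \ref{2609216}. If the crude bound on $\mathcal S_N$ is too weak for small $k-t$, I would fall back on the refined $\mathcal G$/$\mathcal H$ bound in (b).
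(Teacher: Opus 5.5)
\textbf{The proposal has a genuine gap: in the regime $q=2$, $n=2k+1$, $N=1$ your final inequality is larger than $m(n,k,t)$, so the comparison you defer cannot be carried out.}

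Your individual estimates are valid. Termwise, your final inequality is never weaker than the paper's generic bound $\Phi_2(n,k,t,k-1,N-1)+\psi(k,t,k-N,N){n-k+N\brack N}$, for two reasons. First, $\varphi(k,t,\tau,k,0)\le\varphi(k,t,k-1,k,0)$ because $\tau\le k-1$. Second, your final-layer factor ${k-t+1\brack1}^{\tau-k+N}{n-\tau\brack k-\tau}$ decreases in $\tau$ and equals ${n-k+N\brack N}$ at $\tau=k-N$. Hence, whenever $q\ge3$, $n\ge2k+2$ or $N\ge2$, Lemma~\ref{260907} finishes your argument, and the optimisation over $\tau$ is not needed.

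The problem is the remaining regime $q=2$, $n=2k+1$, $N=1$. There $k-N\le\tau\le k-1$ forces $\tau=k-1$, so your bound reads $\varphi(k,t,k-1,k,0)+\psi(k,t,k-1,1){k+2\brack1}$, and this exceeds $m(n,k,t)$. For $(k,t)=(7,3)$, $n=15$, the final-layer term alone is $\psi(7,3,6,1){9\brack1}=2405851875>m(15,7,3)=1574407571$. More generally, for every $t+4\le k\le 2t+1$ one checks that $\psi(k,t,k-1,1){k+2\brack1}>{t+2\brack1}{n-t-1\brack k-t-1}>a(n,k,t)=m(n,k,t)$. So no choice of an extreme value of $\tau$ can repair the bound. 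The difficulty is also not a ``small $k-t$'' phenomenon, which is where your fallback clause looks for it.

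What is missing is a sharper count specific to $N=1$. The paper makes two refinements.
\begin{itemize}
\item It keeps $|\mathcal X_1|\le\varphi(k,t,k-1,k-1,1)$ instead of relaxing it to $\psi(k,t,k-1,1)$. You note this bound but then discard it.
\item It uses Lemma~\ref{2609056} to get $|\mathcal X_0|\le\varphi(k,t,k-1,k-1,0)$. That lemma bounds $\mathcal X_{N-1}$, not $\mathcal S_N$ as your step (a) suggests. The point is that the two $t$-covers of $\mathcal X_0$ can be taken in $\mathcal S_1$, so they have dimension $k-1$ rather than $k$.
\end{itemize}
It then proves $\varphi(k,t,k-1,k-1,0)+\varphi(k,t,k-1,k-1,1){k+2\brack1}<m(2k+1,k,t)$ in Lemma~\ref{2609099}, with Table~\ref{2609176} (vii)--(x) for the small cases.

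Both refinements matter. For $(7,3)$, using $\varphi(7,3,6,6,1)$ for $\mathcal S_1$ but $\varphi(7,3,6,7,0)$ for $\mathcal X_0$ still gives $1598797690>m(15,7,3)$. The paper's bound is $1100780954$.

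Your alternative (b), the $\mathcal G/\mathcal H$ split applied to $\mathcal S_1$, does appear strong enough in examples. For $(7,3)$ it gives $|\mathcal S_1|\le 896345$ and a total below $m(15,7,3)$. However, it would need its own uniform estimate over all $k\ge t+4$ in this regime. The proposal neither sets that estimate up nor aims the fallback at the right case.
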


\begin{proof}
	Since $\mathcal S_N=\mathcal X_N\subseteq{V\brack k-N}$,
	we have $m_N=k-N$, and Lemma~\ref{2609061} (i) gives
	$|\mathcal X_N|\le\psi(k,t,k-N,N)$.
	For the early layers, Lemma~\ref{2609061} (i) and (ii) give
	$|\mathcal X_0|\le\varphi(k,t,k-1,k,0)$ and
	$|\mathcal X_i|\le\psi(k,t,k-i-1,i)$ for $1\le i<N$.
	Substituting these bounds into \eqref{equdecomp}, we obtain
	$$
	|\mathcal F|\le\Phi_2(n,k,t,k-1,N-1)
	+\psi(k,t,k-N,N){n-k+N\brack N}.
	$$
	Lemma~\ref{260907} proves the result if
	$q\ge3$, or $n\ge2k+2$, or $N\ge2$.
	
	Suppose $q=2$, $n=2k+1$ and $N=1$. By Lemma~\ref{2609056}, we get 
	$|\mathcal X_0|\le\varphi(k,t,k-1,k-1,0)$. From  Lemma \ref{2608182} (i) and Lemma \ref{2609061} (i), we obtain
	$|\mathcal X_1|\le\varphi(k,t,k-1,k-1,1)$. 
	Substituting these bounds into \eqref{equdecomp} gives
	$$
	|\mathcal F|\le\varphi(k,t,k-1,k-1,0)
	+\varphi(k,t,k-1,k-1,1){k+2\brack1}.
	$$
	Lemma~\ref{2609099} completes the proof.
\end{proof}

\subsection{$N=k-t-1$}

Here we use the structural information about $\mathcal F$ encoded by the minimal $t$-covers and the final fingerprint to identify
the extremal families as $\mathcal A(Z)$.

\begin{Lemma}\label{2609232}
Suppose that $N=k-t-1$. Then there exists a $(t+2)$-subspace $Z$ of $V$ such that the
	following hold.
	\begin{itemize}
		\item[\normalfont(i)] $\mathcal{S}_{k-t-2}=\mathcal{S}_{k-t-1}$, and $\mathcal{S}_{k-t-1}\subseteq {Z\brack t+1}$.
		\item[\normalfont(ii)]  Either
		$
		\left|\mathcal{F}\right|
		<m(n,k,t)$ or $\mathcal{F}=\mathcal{A}(Z)$. 
	\end{itemize} 
\end{Lemma}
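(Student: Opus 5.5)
The plan is to first pin down the structure of the final layer $\mathcal S_{k-t-1}$, then rule out the intermediate layer $\mathcal X_{k-t-2}$, and finally compare $\mathcal F$ with $\mathcal A(Z)$.

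\emph{Structure of the final layer.} Since $N=k-t-1$, Lemma \ref{2608182} (i) gives $\mathcal S_{k-t-1}=\mathcal X_{k-t-1}\subseteq{V\brack t+1}$, and every member is a minimal $t$-cover of $\mathcal S_{k-t-1}$. Take two distinct members $A,B$. They are $t$-intersecting and both have dimension $t+1$, so $\dim(A\cap B)=t$ and $\dim(A+B)=t+2$. I will use the standard dichotomy for a pairwise $t$-intersecting family of $(t+1)$-subspaces: either all members contain a common $t$-subspace, or all lie in a common $(t+2)$-subspace. To see this, suppose a third member $C$ does not contain $A\cap B$. Then $C$ meets $A$ and $B$ in distinct $t$-subspaces, and these together span $A+B$, so $C\subseteq A+B$. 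Any further member is then forced into $A+B$ by the same argument applied to a suitable pair. The first alternative is impossible, because the common $t$-subspace would be a proper sub-$t$-cover of every member, contradicting minimality. This also excludes $|\mathcal S_{k-t-1}|=1$, since then any $t$-subspace of its single member would already be a $t$-cover. Hence $\mathcal S_{k-t-1}\subseteq{Z\brack t+1}$ for some $Z\in{V\brack t+2}$, and no $t$-subspace lies in all members.

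\emph{The layer $\mathcal X_{k-t-2}$ is empty.} By Lemma \ref{2609056}, $\mathcal S_{k-t-1}=\mathcal S_{k-t-2}\setminus\mathcal X_{k-t-2}$, so it suffices to show $\mathcal X_{k-t-2}=\emptyset$. Let $W\in\mathcal X_{k-t-2}$, a $(t+2)$-subspace. Since $W$ lies in the $t$-intersecting family $\mathcal S_{k-t-2}$, it $t$-intersects every member of $\mathcal S_{k-t-1}$. I split by $\dim(W\cap Z)$.
\begin{itemize}
\item[(a)] $W=Z$ is impossible, because $\mathcal S_{k-t-2}$ is an antichain containing members of ${Z\brack t+1}$.
\item[(b)] If $\dim(W\cap Z)\le t$, then each $S\in\mathcal S_{k-t-1}$ satisfies $S\cap W\subseteq W\cap Z$ and $\dim(S\cap W)\ge t$. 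Hence $W\cap Z$ is a $t$-subspace contained in every member, which contradicts the previous step.
\item[(c)] The remaining case is $H=W\cap Z\in{Z\brack t+1}$. Here I would argue that $H$ is a $t$-cover of all of $\mathcal S_{k-t-2}$, contradicting the minimality of $W$. Members of $\mathcal S_{k-t-1}$ are $(t+1)$-subspaces of $Z$, so each meets $H$ in dimension at least $t$. For the other members of $\mathcal X_{k-t-2}$, I would show that they also meet $Z$ in a $(t+1)$-subspace, by (a) and (b), and then use the fact that $\mathcal S_{k-t-1}$ is a fingerprint of $\mathcal S_{k-t-2}\setminus\mathcal X_{k-t-2}$.
\end{itemize}
I expect case (c) to be the main obstacle. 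If it resists a direct argument, the fallback is to go back to $\mathcal S_0=\mathcal M_t(\mathcal F)$ and use the maximality of $\mathcal F$: a proper subspace of a member of $\mathcal M_t(\mathcal F)$ that covers the later layers would have to be a $t$-cover of $\mathcal F$. This would establish (i).

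\emph{Comparison with $\mathcal A(Z)$.} By (i) and \eqref{equdecomp},
$$\mathcal F=\Bigl(\bigcup_{i=0}^{k-t-3}\mathcal F[\mathcal X_i]\Bigr)\cup\mathcal F[\mathcal S_{k-t-1}],$$
and $\mathcal F[\mathcal S_{k-t-1}]\subseteq\mathcal A(Z)$.
\begin{itemize}
\item If all the early layers are empty, then $\mathcal F\subseteq\mathcal A(Z)$. Since $\mathcal A(Z)$ is $t$-intersecting and $\mathcal F$ is maximal, we get $\mathcal F=\mathcal A(Z)$.
\item Otherwise pick $X\in\mathcal X_i$ with $i\le k-t-3$. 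Because $X$ must $t$-intersect every member of $\mathcal S_{k-t-1}$, the argument of (b) forces $\dim(X\cap Z)=t+1$ with $Z\not\subseteq X$. Every $F\in\mathcal F$ must $t$-intersect $X$, so all members of $\mathcal A(Z)$ that meet $X$ in fewer than $t$ dimensions are absent from $\mathcal F$. This gives $|\mathcal F[\mathcal S_{k-t-1}]|\le a(n,k,t)-D$, where $D$ counts the $k$-subspaces meeting $Z$ in a $(t+1)$-subspace other than $X\cap Z$ and meeting $X$ in dimension at most $t-1$. On the other side, Lemma \ref{2609061} (i)--(iii) bounds the early layers by $\Phi_2(n,k,t,\tau,k-t-3)$, or by $\Phi_1$ if a sharper dimension bound is needed.
\end{itemize}
The final step is the numerical inequality $\Phi_2+a(n,k,t)-D<m(n,k,t)$ for $n\ge 2k+1$, including the tight case $q=2$, $n=2k+1$. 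This would be verified by an appendix-type estimate in the style of Lemma \ref{2609216}, and it yields (ii).
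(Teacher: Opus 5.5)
Your part (i) is correct and essentially matches the paper's argument. Case (c) needs no fallback: applying (a) and (b) to every member of $\mathcal X_{k-t-2}$ shows that all members of $\mathcal S_{k-t-2}$ meet $Z$ in dimension at least $t+1$. Hence every $(t+1)$-subspace of $Z$ is a $t$-cover of $\mathcal S_{k-t-2}$, and minimality of $W$ gives $W=W\cap Z$.

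The gap is in (ii). Two of your claims are unjustified for $i\ge 1$: that $X\in\mathcal X_i$ $t$-intersects every member of $\mathcal S_{k-t-1}$, and that every $F\in\mathcal F$ $t$-intersects $X$. Later fingerprints consist of \emph{subspaces} of members of $\mathcal S_i\setminus\mathcal X_i$, and $t$-intersection with the peeled layer $\mathcal X_i$ is not inherited by subspaces (compare $\mathcal X_{j-1}\setminus\mathcal X_{j-1}[X]\neq\emptyset$ in the proof of Lemma~\ref{2609121}). Nor is $X$ a $t$-cover of $\mathcal F$ in general.

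The decisive problem is that, even granting $\dim(X\cap Z)=t+1$, your correction term vanishes. Every $F$ with $\dim(F\cap Z)\ge t+1$ satisfies $\dim(F\cap X)\ge\dim(F\cap Z\cap X)\ge (t+1)+(t+1)-(t+2)=t$, so $D=0$. The inequality you intend to verify then reads $\Phi_2+a(n,k,t)<m(n,k,t)$. This is impossible for $k\le 2t+1$, where $m=a$ by Lemma~\ref{2609131}. It also fails in tight cases with $k\ge 2t+2$: for $q=2$, $\Phi_2(9,4,1,3,0)+a(9,4,1)=22275+18291>35731=m(9,4,1)$. The element $X$ by itself removes nothing from $\mathcal A(Z)$.

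The missing idea is that the saving must come from the members $L\in{Z\brack t+1}$ that are \emph{not} $t$-covers of $\mathcal F$. For such $L$ some $G\in\mathcal F$ has $\dim(L\cap G)<t$, and the argument of Proposition~\ref{degree} gives $|\mathcal F[L]|\le{k-t+1\brack1}{n-t-2\brack k-t-2}$. This is a factor at least $q^{n-2k+t-1}$ below the trivial bound ${n-t-1\brack k-t-1}$. Accordingly, the paper splits by $\tau$ and $|\mathcal T_t(\mathcal F)|$, not by whether the early layers are empty.
\begin{itemize}
\item If $\tau\ge t+2$, no member of ${Z\brack t+1}$ is a cover. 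Corollary~\ref{key} gives $|\mathcal F[W]|\le{k-t+1\brack1}^{\tau-t-1}{n-\tau\brack k-\tau}$ for each of the ${t+2\brack1}$ choices of $W$. Together with $\Phi_2(n,k,t,\tau,k-t-3)$ for the early layers, Lemma~\ref{2609216} and Table~\ref{2609176}(i) finish.
\item If $\tau=t+1$, then $m_i=t+1$ for all $i$. So a $(t+1)$-dimensional cover is never peeled or shrunk, and $\mathcal T_t(\mathcal F)\subseteq\mathcal S_{k-t-1}\subseteq{Z\brack t+1}$.
\begin{itemize}
\item When $|\mathcal T_t(\mathcal F)|>{2\brack1}$, a member $F$ with $\dim(F\cap Z)\le t$ would force the $t$-space $F\cap Z$ into every member of $\mathcal T_t(\mathcal F)$. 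But only ${2\brack1}$ members of ${Z\brack t+1}$ contain a given $t$-space. Hence $\mathcal F\subseteq\mathcal A(Z)$, and maximality gives $\mathcal F=\mathcal A(Z)$.
\item When $|\mathcal T_t(\mathcal F)|\le{2\brack1}$, the final layer is at most ${2\brack1}{n-t-1\brack k-t-1}+q^2{t\brack1}{k-t+1\brack1}{n-t-2\brack k-t-2}$. For $k\ge t+3$ the early layers are at most $\Phi_1(n,k,t,t+1,t+1,k-t-3)$. Lemma~\ref{2609052} then gives $|\mathcal F|<m(n,k,t)$.
\end{itemize}
\end{itemize}
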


\begin{proof}

	(i) Note that $N=k-t-1\geq 1$. Lemma \ref{2609056} implies 
	$
	\mathcal{S}_{k-t-2}\setminus\mathcal{X}_{k-t-2}
	=\mathcal{S}_{k-t-1}.
	$
	
	Since each member of $\mathcal{S}_{k-t-1}$ is a minimal $t$-cover of $\mathcal{S}_{k-t-1}$, there exist $A,B\in\mathcal{S}_{k-t-1}$ such that $\dim(A\cap B)=t$. Since $A\cap B$ is a proper subspace of $A$, it is not a $t$-cover of $\mathcal{S}_{k-t-1}$. Choose $C\in\mathcal{S}_{k-t-1}$ with $\dim(A\cap B\cap C)<t$. Set $Z=A+B$. Then $\dim(Z)=t+2$, 
	$$
	t+1\geq\dim((A\cap C)+(B\cap C))
	\geq 2t-\dim(A\cap B\cap C)\geq t+1.
	$$
	Hence  $C\subseteq Z$ and $\dim(A\cap B\cap C)=t-1$.
	
	As $A,B,C\in\mathcal S_{k-t-2}$, every
	$W\in\mathcal S_{k-t-2}$ satisfies $\dim(W\cap Z)\ge t+1$.
	Otherwise, $
	W\cap A=W\cap B=W\cap C=W\cap Z$ is a $t$-subspace of $A\cap B\cap C$, a contradiction. It follows that every member of ${Z\brack t+1}$ is a $t$-cover of $\mathcal{S}_{k-t-2}$. Pick $T\in{W\cap Z\brack t+1}$. By the minimality of $W$, we know $W=T$. Therefore $\mathcal{S}_{k-t-2}\subseteq {Z\brack t+1}$, which implies $\mathcal{X}_{k-t-2}=\emptyset$ and $\mathcal{S}_{k-t-2}=\mathcal{S}_{k-t-1}$.
	
	(ii) 
	We divide the proof into three cases: $\tau\geq t+2$;  $\tau=t+1$ and $|\mathcal{T}_{t}(\mathcal{F})|\leq {2\brack 1}$;  $\tau=t+1$ and $|\mathcal{T}_{t}(\mathcal{F})|>{2\brack 1}$. We prove $
	\left|\mathcal{F}\right|
	<m(n,k,t)$  in the first two cases, 
	and $\mathcal{F}=\mathcal{A}(Z)$ in the last.
	
	Assume  $\tau\geq t+2$.  Lemma~\ref{2609061} (ii) gives $t+2\le\tau=m_0\le k-1$. So $k\ge t+3$. By (i), $\mathcal X_{k-t-2}=\emptyset$. Then by Lemma \ref{2609061} (i)--(iii), we bound the early layers $\bigcup_{i\leq k-t-3}\mathcal{F}[\mathcal{X}_i]$ by $\Phi_2(n,k,t,\tau,k-t-3)$. For the final layer, Corollary~\ref{key} gives
	$|\mathcal F[W]|\le{k-t+1\brack1}^{\tau-t-1}
	{n-\tau\brack k-\tau}$ for every $W\in{Z\brack t+1}$. By summing over these $W$ and using  Lemma \ref{2608182} (ii),
	we obtain
$$
	|\mathcal F|\le\Phi_2(n,k,t,\tau,k-t-3)
	+{t+2\brack1}{k-t+1\brack1}^{\tau-t-1}{n-\tau\brack k-\tau}.
$$
	From Lemma \ref{2609216} and Table \ref{2609176} (i), we obtain $
	\left|\mathcal{F}\right|
	<m(n,k,t)$.

	Assume $\tau=t+1$. Since $m_{0}=m_{k-t-1}=t+1$, Lemma \ref{2609061}  gives $m_{i}=t+1$ for $0\leq i\leq k-t-1$. Pick $T\in\mathcal{T}_{t}(\mathcal{F})$. Then $T\in\mathcal{S}_{0}$. Suppose $T\in\mathcal{S}_{i}$ for some $0\leq i\leq k-t-2$. Since $\dim(T)=t+1<k-i$, we have $T\notin\mathcal{X}_{i}$. By the definition of a fingerprint, there exists $X\in\mathcal{S}_{i+1}$ with $X\subseteq T$. From $m_{i+1}=t+1=\dim(T)$, we obtain $X=T$. Applying this argument repeatedly yields
	$\mathcal{T}_{t}(\mathcal{F})
	\subseteq\mathcal{S}_{k-t-1}
	\subseteq{Z\brack t+1}$.
	
	Suppose $|\mathcal{T}_{t}(\mathcal{F})|\leq {2\brack 1}$. For $k\ge t+3$, Lemma~\ref{2609061} (i) and $m_i=t+1$ give
\begin{equation}\label{2609057}
	\sum_{i=0}^{k-t-3}|\mathcal F[\mathcal X_i]|
	\le\sum_{i=0}^{k-t-3}|\mathcal X_i|{n-k+i\brack i}
	\le\Phi_1(n,k,t,t+1,t+1,k-t-3).
\end{equation}
For $k=t+2$, the sums are empty.
	
	For $H\in{Z\brack t+1}\setminus\mathcal{T}_{t}(\mathcal{F})$, there exists $G\in\mathcal{F}$ such that $\dim(H\cap G)<t$. Since $Z$ contains a member of $\mathcal{T}_{t}(\mathcal{F})$, we have $\dim(H\cap G)=t-1$. We further conclude $$\mathcal{F}[H]\subseteq \bigcup_{U\in{H+G\brack t+2},\ H\subseteq U}\mathcal{F}[U].$$ This implies  $\left|\mathcal{F}[H]\right|\leq {k-t+1\brack 1}{n-t-2\brack k-t-2}$. 
	For $W\in\mathcal{T}_{t}(\mathcal{F})$, we have
	$|\mathcal{F}[W]|\leq {n-t-1\brack k-t-1}$. Hence 
	\begin{equation}\label{2609058}
		\left|\mathcal{F}[\mathcal{S}_{k-t-2}]   \right|\leq\left({t+2\brack 1}-|\mathcal{T}_{t}(\mathcal{F})|\right){k-t+1\brack 1}{n-t-2\brack k-t-2}+|\mathcal{T}_{t}(\mathcal{F})|{n-t-1\brack k-t-1}.
	\end{equation}
	Note that
	$$
	\frac{{k-t+1\brack 1}{n-t-2\brack k-t-2}}{{n-t-1\brack k-t-1}}=\frac{(q^{k-t+1}-1)(q^{k-t-1}-1)}{(q-1)(q^{n-t-1}-1)}\leq \frac{1}{q^{n-2k+t-1}}\leq 1.
	$$
Then the RHS of \eqref{2609058} is increasing in
$|\mathcal T_t(\mathcal F)|$, and takes the maximum value when $|\mathcal T_t(\mathcal F)|={2\brack1}$. Combining \eqref{2609057}, \eqref{2609058} and Lemma \ref{2608182} (ii)
gives, for $k\ge t+3$,
	$$
	\begin{aligned}
	|\mathcal{F}|&\leq\Phi_{1}(n,k,t,t+1,t+1,k-t-3)
	+{2\brack 1}{n-t-1\brack k-t-1}
	+q^{2}{t\brack 1}{k-t+1\brack 1}{n-t-2\brack k-t-2},
	\end{aligned}
	$$
	and for $k=t+2$,  $|\mathcal{F}|\leq {2\brack 1}{n-t-1\brack 1}
	+q^{2}{t\brack 1}{3\brack 1}$.
	It follows from Lemma \ref{2609052} that $
	\left|\mathcal{F}\right|
	<m(n,k,t)$, as desired.

	Suppose $\left|\mathcal{T}_{t}(\mathcal{F})\right|>{2\brack 1}$. Recall that $\mathcal{T}_{t}(\mathcal{F})
	\subseteq{Z\brack t+1}$. If $\dim(F\cap Z)\leq t$ for some $F\in\mathcal{F}$, then by $|\mathcal{T}_{t}(\mathcal{F})|>{2\brack 1}$,  there exists $T\in\mathcal{T}_{t}(\mathcal{F}) $ such that $\dim(F\cap T)<t$, a contradiction. Hence $\mathcal{F}\subseteq \mathcal{A}(Z)$. It follows from the maximality of $\mathcal{F}$ that $\mathcal{F}= \mathcal{A}(Z)$.
\end{proof}

\subsection{$N=k-t$}

This case requires a more detailed analysis,
since extremal and nearly extremal configurations can occur. We begin with an auxiliary counting lemma.

\begin{Lemma}\label{2609252}
	Let $\mathcal S\subseteq{V\brack\leq t+2}$ be non-empty.
	Suppose that every member of $\mathcal S$ is a minimal
	$t$-cover of $\mathcal S$.
	If $A\in\mathcal S$ has dimension $t+1$ and $X\in{A\brack t}$, then
	$$
	|\{W\in\mathcal S:\dim(W)=t+2,\ X\nsubseteq W\}|
	\leq q^3{t\brack1}{2\brack1}^2.
 $$
\end{Lemma}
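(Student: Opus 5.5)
The plan is to enumerate the $(t+2)$-subspaces $W\in\mathcal S$ with $X\nsubseteq W$ in three stages: first fix $Y:=W\cap A$, then fix an intermediate $(t+1)$-subspace $U$ with $Y\subseteq U\subseteq W$, and finally fix $W$ itself. At every stage the only tool is the minimal $t$-cover property: a proper subspace of a member of $\mathcal S$ is not a $t$-cover of $\mathcal S$, while every member of $\mathcal S$ meets every other member in dimension at least $t$.

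\emph{Stage 1: choosing $Y$.} Since $W$ is a $t$-cover of $\mathcal S$ and $A\in\mathcal S$, we have $\dim(W\cap A)\ge t$. If $\dim(W\cap A)=t+1$, then $A\subsetneq W$, which is impossible because $\mathcal S$ is an antichain (the proof of Lemma \ref{2608182} (i) applies verbatim). Hence $Y:=W\cap A$ is a $t$-subspace of $A$, and $Y\neq X$ because $X\nsubseteq W$. The number of choices for $Y$ is therefore at most ${t+1\brack 1}-1=q{t\brack 1}$.

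\emph{Stage 2: choosing $U$.} Fix $Y$. Since $Y\subsetneq A$ and $A$ is a minimal $t$-cover, there is $C\in\mathcal S$ with $\dim(C\cap Y)<t$. From $\dim(C\cap A)\ge t$ and the fact that $Y$ is a hyperplane of $A$, we get $\dim(C\cap Y)=t-1$. Also $\dim(C\cap W)\ge t$, so $C\cap W\nsubseteq Y$. Pick a line $L\subseteq C\cap W$ with $L\nsubseteq Y$ and set $U=Y+L$, a $(t+1)$-subspace of $W$. The point $U/Y$ lies in $(Y+C)/Y$, whose dimension is at most $\dim C-(t-1)\le 3$. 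Moreover $U\neq A$, because $W\cap A=Y$, so the single point $A/Y$ is excluded. This leaves at most ${3\brack1}-1=q{2\brack1}$ choices of $U$, and $C$ depends only on $Y$.

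\emph{Stage 3: choosing $W$.} Fix $U$. Since $U\subsetneq W$, the subspace $U$ is not a $t$-cover of $\mathcal S$, so there is $D\in\mathcal S$ with $\dim(D\cap U)\le t-1$. As $\dim(D\cap W)\ge t$ and $U$ is a hyperplane of $W$, we get $\dim(D\cap U)=t-1$ and $W=U+(D\cap W)$. Thus $W/U$ is a point of $(U+D)/U$, a space of dimension at most $(t+2)-(t-1)=3$, giving at most ${3\brack1}$ choices for $W$. Here $D$ depends only on $U$.

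\emph{Combining.} Multiplying the three stages gives the bound $q{t\brack1}\cdot q{2\brack1}\cdot{3\brack1}$. It remains to verify ${3\brack1}=1+q+q^2\le q(q+1)=q{2\brack1}$, which reduces to $1\le q^2$ and so holds. Hence the bound is at most $q^3{t\brack1}{2\brack1}^2$.

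The main point requiring care is the exclusion of $A/Y$ in Stage 2. Without it the crude product ${3\brack1}^2$ slightly exceeds $q^2{2\brack1}^2$, and the bound fails. It is also essential that the auxiliary members $C$ and $D$ are chosen depending only on $Y$ and on $U$, respectively, and not on $W$, so that the three counts genuinely multiply.
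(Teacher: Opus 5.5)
Your three-stage decomposition ($Y=W\cap A$, then $U$, then $W$) is the right one, but the final step is false. ${3\brack1}=q^2+q+1=q{2\brack1}+1$, so the inequality ${3\brack1}\le q{2\brack1}$ is equivalent to $1\le 0$, not to $1\le q^2$. Your product $q{t\brack1}\cdot q{2\brack1}\cdot{3\brack1}$ therefore exceeds the claimed bound $q^3{t\brack1}{2\brack1}^2$ by exactly $q^2{t\brack1}{2\brack1}$, and the lemma is not proved. Your closing remark already shows the problem. Since ${3\brack1}-1=q{2\brack1}$, a single exclusion only turns ${3\brack1}^2$ into $({3\brack1}-1){3\brack1}$, which is still larger than $({3\brack1}-1)^2=q^2{2\brack1}^2$. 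You need two exclusions, one at each of the last two stages.

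The missing idea is to exclude the subspace coming from $A$ in Stage 3 as well. Since $U\neq A$, $Y\subseteq U\cap A$ and $\dim Y=t$, we get $U\cap A=Y$ and $\dim(U+A)=t+2$. Moreover, $\dim(D\cap A)\ge t$, while $\dim(D\cap Y)\le\dim(D\cap U)=t-1$. Hence $A=Y+(D\cap A)\subseteq U+D$, so $U+A$ is one of the $(t+2)$-subspaces between $U$ and $U+D$. Also $W\neq U+A$ because $A\nsubseteq W$. This leaves at most ${3\brack1}-1=q{2\brack1}$ choices for $W$, and the product becomes exactly $q{t\brack1}\cdot q{2\brack1}\cdot q{2\brack1}$.

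The same computation is also what justifies your Stage 2 exclusion, which you asserted without proof. From $\dim(C\cap A)\ge t>\dim(C\cap Y)$ you get $A=Y+(C\cap A)\subseteq Y+C$, so $A/Y$ really is a point of $(Y+C)/Y$. Merely observing $U\neq A$ does not by itself reduce the count. With both exclusions in place, your argument is the paper's proof. The paper uses $Z+A\subseteq Z+B_Z$ for $Z=E$ and for $Z=H$ to discard $H=A$ and then $U=A+H$.
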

\begin{proof}
	Since $\mathcal S$ is an antichain, we get
	$\dim(S\cap A)=t$ for any $S\in\mathcal S\setminus\{A\}$.
	Write $\mathcal W=\mathcal S\cap{V\brack t+2}$.   
	  Then $\{W\in\mathcal{W}:X\nsubseteq W\}\subseteq \cup_{E}\mathcal{W}[E]$, where $E$ ranges over all $t$-subspaces of $A$ satisfying $E\neq X$. It is sufficient to show $\left|\mathcal{W}[E]\right|\leq q^{2}{2\brack 1}^{2}$ for any $E\in{A\brack t}$.

	Fix $E\in{A\brack t}$.  We may assume $\mathcal{W}[E]$ is non-empty. 
	Suppose  $E\subseteq Z\subsetneq W\in\mathcal W[E]$.
	By the minimality of $W$, some $B_{Z}\in\mathcal{S}\setminus \{A\}$ satisfies
	$\dim( B_{Z}\cap Z)<t$. 
	 Moreover, by $\dim(B_{Z}\cap A)=t$, we know $\dim(B_{Z}\cap Z)=\dim(B_{Z}\cap E)=t-1$.  We further conclude $A=E+(A\cap B_{Z})\subseteq Z+B_{Z}$, which implies $Z+A\subseteq Z+B_{Z}$.

   We first set $Z=E$. Then $A\subseteq E+B_{E}$. Recall that $A\nsubseteq W$ for any $W\in\mathcal{W}$.  Then $\mathcal{W}[E]\subseteq \cup_{H} \mathcal{W}[H]$, where $H$ ranges over all $(t+1)$-subspaces of $E+B_{E}$ satisfying $E\subseteq H$ and $A\neq H$. Pick such an $H$ with $|\mathcal{W}[H]|>0$. Then set $Z=H$ in the above process.  Notice that  $\dim(H+A)=t+2$ and $H+A\subseteq H+B_{H}$.  It follows that $\mathcal{W}[H]\subseteq \cup_{U} \mathcal{W}[U]$, where $U$ ranges over all $(t+2)$-subspaces of $H+B_{H}$ satisfying $H\subseteq U$ and $A+H\neq U$. Therefore, we have $\left|\mathcal{W}[E]\right|\leq q^{2}{2\brack 1}^{2}$, as desired.
   \end{proof}

\begin{Lemma}\label{2609121}
	Suppose that $N=k-t$. 
	Then $\mathcal S_{k-t}=\{X\}$ for some $X\in{V\brack t}$, and the following hold.
	\begin{itemize}
		\item[\normalfont(i)] Set $j=\min\{i:\mathcal S_i=\{X\}\}$.
		Then $1\leq j\leq k-t-1$.
		\item[\normalfont(ii)] If $\tau\geq t+2$, then
		$|\mathcal F|<m(n,k,t)$.
		\item[\normalfont(iii)] Suppose that $\tau=t+1$.
		Set $M=\sum_{T\in\mathcal T_t(\mathcal F)}T$.
		If $\dim(M)\leq k$, then
		$|\mathcal F|<m(n,k,t).$
		If $\dim(M)=k+1$, then $\mathcal F=\mathcal H(X,M)$.
	\end{itemize}
\end{Lemma}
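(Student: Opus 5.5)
First I would identify $\mathcal S_{k-t}$. Since $N=k-t$, we have $\mathcal S_{k-t}=\mathcal X_{k-t}\subseteq{V\brack t}$, and every member is a minimal $t$-cover of $\mathcal S_{k-t}$. Two distinct $t$-subspaces meet in dimension less than $t$, so $t$-intersection forces $|\mathcal S_{k-t}|=1$; call its member $X$. For (i), note that $j\ge1$: otherwise $\mathcal S_0=\{X\}$, so $X$ would be a $t$-cover of $\mathcal F$, contradicting $\tau\ge t+1$. To get $j\le k-t-1$, suppose $j=k-t$. Then $\mathcal S_{k-t-1}\setminus\mathcal X_{k-t-1}$ is a nonempty family of subspaces of dimension at most $t$ (Lemma \ref{2609056}), and by fingerprint property (i)/(ii) this forces it to be $\{X\}$. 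Hence $\mathcal S_{k-t-1}$ contains $X$ together with members of dimension $t+1$. Being an antichain, all of these must contain $X$, since they $t$-intersect $X$. But then $X$ is a proper subspace of such a member that is still a $t$-cover, contradicting minimality. So either $\mathcal S_{k-t-1}=\{X\}$, which gives $j\le k-t-1$, or we already have a contradiction.

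For (ii), I would use decomposition \eqref{equdecomp} truncated at $j$. The early layers $\bigcup_{i<j}\mathcal F[\mathcal X_i]$ are bounded through Lemma \ref{2609061} (i)--(iii), with $m_i\le\tau$ and $m_i\le k-i-1$, exactly as in the $N=k-t-1$ case. This gives a bound of the shape $\Phi_2(n,k,t,\tau,j-2)$ plus the $\mathcal X_{j-1}$ term; the latter is controlled by Lemma \ref{2609056}-type reasoning, since $\mathcal S_j=\{X\}$. The final layer is $\mathcal F[X]$. Because $\tau\ge t+2$, I would bound $|\mathcal F[X]|$ using Corollary \ref{key} with $s=t$, namely $|\mathcal F[X]|\le{k-t+1\brack1}^{\tau-t-1}{n-\tau\brack k-\tau}$, which is small relative to ${n-t\brack k-t}$. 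The numerical comparison with $m(n,k,t)$ would be delegated to an appendix estimate analogous to Lemma \ref{2609216}.

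Part (iii) is the main obstacle. With $\tau=t+1$, minimality of the members of $\mathcal S_0=\mathcal M_t(\mathcal F)$ gives $\mathcal T_t(\mathcal F)\subseteq\mathcal S_0$. Arguing as in Lemma \ref{2609232}, each $T\in\mathcal T_t(\mathcal F)$ must contain $X$, since it survives to stage $j-1$ or is peeled down to $X$. The key observation is that any $F\in\mathcal F$ with $X\nsubseteq F$ meets every $T\in\mathcal T_t(\mathcal F)$ in a $t$-space different from $X$, which forces $\dim(F\cap M)\ge t+1$. If $\dim(M)=k+1$, every $F\notin\mathcal F[X]$ must contain enough of $M$ to lie in ${M\brack k}$. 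To see this I would use that $\mathcal T_t(\mathcal F)$ spans $M$, so $F$ meets a spanning set of $(t+1)$-spaces through $X$ in hyperplanes avoiding $X$. Combined with the fact that members of $\mathcal F[X]$ must then meet $M$ in dimension at least $t+1$, this gives $\mathcal F\subseteq\mathcal H(X,M)$, and maximality yields equality. If $\dim(M)\le k$, the same structure shows that the members avoiding $X$ are confined to few subspaces, counted via Lemma \ref{2609252}, and I would bound $|\mathcal F|$ by roughly $|\mathcal F[X]\cap\{\dim(F\cap M)\ge t+1\}|$ plus a small error. The hardest step is making this count beat $h(n,k,t)$, particularly when $q=2$ and $n=2k+1$. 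I would first try to split by $\dim(M)$, with $M$ properly contained in a $(k+1)$-space, and compare with the same count for a $(k+1)$-space, deferring the inequality to the appendix.
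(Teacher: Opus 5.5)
Your identification of $\mathcal S_{k-t}=\{X\}$, your part (i), and your case $\dim(M)=k+1$ of (iii) essentially match the paper. For the last case, the clean form is: $T=X+(F\cap T)\subseteq X+F$ for every $T\in\mathcal T_t(\mathcal F)$ and every $F\notin\mathcal F[X]$, so $M=X+F\supseteq F$. There are, however, two genuine gaps.

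\textbf{Part (ii).} You bound the early layers $i\le j-1$ and then all of $\mathcal F[X]$ by Corollary~\ref{key}. Note that Corollary~\ref{key} with $s=t$ gives ${k-t+1\brack1}^{\tau-t}{n-\tau\brack k-\tau}$, not exponent $\tau-t-1$. Lemma~\ref{2609056} does not transfer to the $\mathcal X_{j-1}$ term either, because $\mathcal S_j=\{X\}$ is a singleton. This route suffices for $j\le k-t-2$, but it fails for $j=k-t-1$, where the extra layer $\mathcal F[\mathcal X_{k-t-2}]$ appears. Take $(n,k,t,q)=(9,4,1,2)$, so $\tau=3$.
\begin{itemize}
\item $|\mathcal X_0|\le\varphi(4,1,3,4,0)=22275$.
\item Even the sharp form of Corollary~\ref{key} gives $|\mathcal F[X]|\le{4\brack1}\bigl({7\brack2}-2^8{3\brack2}\bigr)=13125$.
\item These already total $35400$, leaving only $331$ below $h(9,4,1)=35731$.
\item For $j=2$, the natural bound on $\mathcal F[\mathcal X_1]\setminus\mathcal F[X]$ is $72\cdot{6\brack1}=4536$.
\end{itemize}
So the numerical inequality your plan defers to the appendix is false there.

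The missing idea is to avoid bounding all of $\mathcal F[X]$ when $j=k-t-1$. Minimality of the members of $\mathcal S_{k-t-2}$ gives some $W\in\mathcal X_{k-t-2}$ with $X\nsubseteq W$. Then $\dim(W\cap X)=t-1$ and $\dim(X+W)=t+3$.
\begin{itemize}
\item Every member of $\mathcal F[\mathcal S_{k-t-2}]$ through $X$ contains one of the ${3\brack1}$ $(t+1)$-spaces between $X$ and $X+W$. Each of these has degree at most ${k-t+1\brack1}^{\tau-t-1}{n-\tau\brack k-\tau}$.
\item Every member avoiding $X$ contains one of at most $q^3{t\brack1}{2\brack1}^2$ members of $\mathcal X_{k-t-2}\setminus\mathcal X_{k-t-2}[X]$.
\end{itemize}
The second bullet is where Lemma~\ref{2609252} is actually used.

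\textbf{Part (iii) with $\dim(M)\le k$.} Here you have no working mechanism. Your claim that $\dim(F\cap M)\ge t+1$ for $F\notin\mathcal F[X]$ fails when $|\mathcal T_t(\mathcal F)|=1$. What does hold is $M\subseteq X+F$, hence $\dim(F\cap M)=\dim(M)-1$. Also, every $(t+1)$-space between $X$ and $M$ is a $t$-cover, so $\mathcal T_t(\mathcal F)=\{T\in{M\brack t+1}:X\subseteq T\}$. One then needs three separate bounds.
\begin{itemize}
\item $|\mathcal F[\mathcal T_t(\mathcal F)]|\le{\dim(M)-t\brack1}{n-t-1\brack k-t-1}$.
\item The members through $X$ with $F\cap M=X$ are not negligible a priori. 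Fix $D\in\mathcal F\setminus\mathcal F[X]$, which is nonempty by nontriviality. Each such member contains a $(t+1)$-space $H$ with $X\subseteq H\subseteq X+D$ and $H\nsubseteq M$. None of these $H$ is a $t$-cover, which bounds their degrees.
\item The members avoiding $X$. If $\dim(M)\ge t+2$, count $k$-spaces meeting $M$ in a hyperplane not containing $X$. If $\dim(M)=t+1$, the geometry only gives $\dim(F\cap M)=t$. That permits roughly $q{t\brack1}{n-t\brack k-t}$ members, vastly more than $m(n,k,t)$.
\end{itemize}
In the case $\dim(M)=t+1$ you must return to the peeling. We have $\mathcal F\setminus\mathcal F[X]\subseteq\bigcup_{i<j}\mathcal F[\mathcal X_i\setminus\mathcal X_i[X]]$. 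Since $M\in\mathcal S_i$, each member of $\mathcal X_i\setminus\mathcal X_i[X]$ contains one of the $q{t\brack1}$ $t$-subspaces of $M$ other than $X$. Lemma~\ref{2608182}(iii) then bounds these layers.

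Lemma~\ref{2609252} cannot do this job: it concerns $(t+2)$-dimensional members of a fingerprint, not $k$-spaces of $\mathcal F$. Comparing with a $(k+1)$-space containing $M$ does not by itself produce the required inequality.
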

\begin{proof}
	The family $\mathcal S_{k-t}$ is a $t$-intersecting antichain
	consisting of $t$-subspaces, so $\mathcal S_{k-t}=\{X\}$ for some $X\in{V\brack t}$.
	
	(i) By property (ii) in the definition of a fingerprint,
	each member of the non-empty family
	$\mathcal S_{k-t-1}\setminus\mathcal X_{k-t-1}$ contains $X$.
	Its members have dimension at most $t$, so
	$\mathcal S_{k-t-1}\setminus\mathcal X_{k-t-1}=\{X\}$. 
	The $t$-intersection and antichain properties imply that
	$\mathcal S_{k-t-1}=\{X\}$, so $j\leq k-t-1$.
	Since $\mathcal S_0=\mathcal M_t(\mathcal F)$ and
	$\tau\geq t+1$, we also have $j\geq1$.
	
	For later use, note that every member of
	$\mathcal S_{j-1}\setminus\mathcal X_{j-1}$ contains $X$,
	whereas $X\notin\mathcal S_{j-1}$ by the choice of $j$.
	If every member of $\mathcal X_{j-1}$ also contained $X$,
	then $X$ would be a $t$-cover of $\mathcal S_{j-1}$,
	contrary to the minimality of its members.
	Hence $\mathcal X_{j-1}\setminus\mathcal X_{j-1}[X]\neq\emptyset$. For $j\le i<N$, we have $\mathcal S_i=\{X\}$ and
	$\mathcal X_i=\emptyset$, so the early-layer union in
	\eqref{equdecomp} ends at $j-1$. Hence
	\begin{equation}\label{2609251}
		\mathcal F=\left(\bigcup_{i=0}^{j-1}\mathcal F[\mathcal X_i]\right)\cup\mathcal F[X].
	\end{equation}
	
	(ii) By Lemma~\ref{2609061} (ii), we know
	$t+2\leq\tau=m_{0}\leq k-1$, and hence $k\geq t+3$.	
	For the early layers,	Lemma~\ref{2609061} (i)--(iii) implies
	\begin{equation}\label{equN=k-t}
		|\mathcal X_0|\leq\varphi(k,t,\tau,k,0), \ 	|\mathcal F[\mathcal X_i]|
		\leq\psi(k,t,k-i-1,i){n-k+i\brack i},\qquad 1\leq i<j.
	\end{equation}

	Suppose that $j\le k-t-2$.
	We apply Corollary~\ref{key} to $\mathcal F$ to bound $|\mathcal F[X]|$.
	Combining this bound with \eqref{2609251} and \eqref{equN=k-t}  gives
	$$
	|\mathcal F|
	\leq\Phi_2(n,k,t,\tau,k-t-3)
	+{k-t+1\brack1}^{\tau-t}
	{n-\tau\brack k-\tau}.
	$$
	If $(n,k,t,q)\neq(9,4,1,2)$, then Lemma \ref{2609216} gives the desired result. If $(n,k,t,q)=(9,4,1,2)$, then $\tau=3$ and $j=1$.  Lemma~\ref{2609061} (i) and Corollary~\ref{key} yield
	$$
	|\mathcal F|\leq\varphi(4,1,3,4,0)
	+{4\brack1}\left({7\brack2}-2^8{3\brack2}\right)
	=35400<35731=h(9,4,1).
	$$
	
	It remains to consider $j=k-t-1$.
	Every member of $\mathcal S_{k-t-2}\setminus\mathcal X_{k-t-2}$
	has dimension $t+1$ and contains $X$.
	Choose $W\in\mathcal X_{k-t-2}\setminus\mathcal X_{k-t-2}[X]$.
	Since $W$ $t$-intersects a member of
	$\mathcal S_{k-t-2}\setminus\mathcal X_{k-t-2}$ but does not contain $X$,
	we have $\dim(W\cap X)=t-1$ and $\dim(X+W)=t+3$. We apply Lemma~\ref{2608182} (ii) with $i=k-t-2$
	and estimate the remaining family $\mathcal F[\mathcal S_{k-t-2}]$
	according to whether its members contain $X$.
	Every member of $\mathcal F[\mathcal S_{k-t-2}]$ $t$-intersects $W$.
	If it contains $X$, then it contains a $(t+1)$-subspace $H$
	with $X\subseteq H\subseteq X+W$.
	There are ${3\brack1}$ choices for $H$.
	Applying Corollary~\ref{key} and summing over these choices gives
	$$
	|\mathcal F[\mathcal S_{k-t-2}]\cap\mathcal F[X]|
	\leq{3\brack1}{k-t+1\brack1}^{\tau-t-1}
	{n-\tau\brack k-\tau}.
	$$
	On the other hand, each member of
	$\mathcal F[\mathcal S_{k-t-2}]\setminus\mathcal F[X]$
	contains a member of $\mathcal X_{k-t-2}\setminus\mathcal X_{k-t-2}[X]$. There are at most
	$q^3{t\brack1}{2\brack1}^2$ such $(t+2)$-subspaces
by applying  Lemma~\ref{2609252} with $\mathcal{S}=\mathcal{S}_{k-t-2}$.  
	Consequently, \eqref{2609251} and \eqref{equN=k-t} imply
	\begin{equation*}
		|\mathcal F|
		\leq\Phi_2(n,k,t,\tau,k-t-3)
		+{3\brack1}{k-t+1\brack1}^{\tau-t-1}
		{n-\tau\brack k-\tau}
		+q^3{t\brack1}{2\brack1}^2{n-t-2\brack k-t-2}.
	\end{equation*}
	Lemma \ref{2609216} and Table \ref{2609176} (ii) now give $|\mathcal F|<m(n,k,t)$.
	
	(iii) Since $\tau=t+1$, we have
	$\mathcal T_t(\mathcal F)=\mathcal S_0\cap{V\brack t+1}$.
	For $i<j$, every member of $\mathcal X_i$ has dimension at least $t+2$.
	Moreover, if some $\mathcal S_i$ contained a $t$-subspace,
	then its $t$-intersection and antichain properties would make it a singleton.
	That subspace would persist to $\mathcal S_{k-t}=\{X\}$,
	so $\mathcal S_i=\{X\}$.
	Thus no member of $\mathcal T_t(\mathcal F)$ can be replaced by a proper
	subspace before index $j$, and
	$$
	\mathcal T_t(\mathcal F)\subseteq\mathcal S_{j-1}\setminus\mathcal X_{j-1}.
	$$
	In particular, every member of $\mathcal T_t(\mathcal F)$ contains $X$,
	so $\dim(F\cap X)\geq t-1$ for every $F\in\mathcal F$.
	
	Set
	$
	\mathcal F_1=\mathcal F\setminus\mathcal F[X]$ and $
	\mathcal F_2=\mathcal F[X]\setminus\mathcal F[\mathcal T_t(\mathcal F)].
	$
	Then
	$\mathcal F=\mathcal F_1\cup\mathcal F[\mathcal T_t(\mathcal F)]\cup\mathcal F_2$,
	where the three subfamilies are pairwise disjoint.

	The nontriviality of $\mathcal F$ ensures that
	$\mathcal F_1\neq\emptyset$.
	For every $D\in\mathcal F_1$ and
	$T\in\mathcal T_t(\mathcal F)$, we have
	$\dim(D\cap X)=t-1$ and $T=X+(D\cap T)\subseteq X+D$.
	Hence $M\subseteq X+D$ and $t+1\leq\dim(M)\leq k+1$.
	Conversely, every $(t+1)$-subspace $T$ with $X\subseteq T\subseteq M$
	is a $t$-cover of $\mathcal F$:
	it $t$-intersects every member of $\mathcal F[X]$, and for 
	$D\in\mathcal F_1$, the inclusion
	$T+D\subseteq X+D$ gives $\dim(T\cap D)\geq t$.
	Therefore, we have
	\begin{equation}\label{2609255}
		\mathcal T_t(\mathcal F)=\left\{T\in{M\brack t+1}:X\subseteq T\right\}.
	\end{equation}
	
	Suppose first that $\dim(M)=k+1$.
	Then $M=X+D$ for every $D\in\mathcal F_1$,
	so $\mathcal F_1\subseteq{M\brack k}$.
	Fix such a $D$. For every $F\in\mathcal F[X]$, we have
	$\dim(F\cap D)\geq t$ and $\dim(X\cap D)=t-1$,
	which imply $\dim(F\cap M)\geq t+1$.
	Thus $\mathcal F\subseteq\mathcal H(X,M)$,
	and equality follows from the maximality.
	
	In what follows, suppose that $\dim(M)\leq k$.
	By (\ref{2609255}), we get
	\begin{equation}\label{2609253}
			|\mathcal F[\mathcal T_t(\mathcal F)]|
		\leq{\dim(M)-t\brack1}{n-t-1\brack k-t-1}.
	\end{equation}
	Fix $D\in\mathcal F_1$.
	Every member of $\mathcal F_2$
	contains a $(t+1)$-subspace $H$ with
	$X\subseteq H\subseteq X+D$ and $H\nsubseteq M$.
	There are ${k-t+1\brack1}-{\dim(M)-t\brack1}$ choices for $H$.
	Since $H$ is not a $t$-cover of $\mathcal F$, there exists
	$D_H\in\mathcal F$ with $\dim(H\cap D_H)=t-1$.
	Counting the $(t+2)$-subspaces between $H$ and $H+D_H$ gives
	$|\mathcal F[H]|\leq{k-t+1\brack1}{n-t-2\brack k-t-2}$.
	Consequently,
	\begin{equation}\label{2609256}
		 |\mathcal F_2|
			\leq\left({k-t+1\brack1}-{\dim(M)-t\brack1}\right)
			{k-t+1\brack1}{n-t-2\brack k-t-2}.
	\end{equation}
	
	If $\dim(M)=t+1$, then $\mathcal T_t(\mathcal F)=\{M\}$.
	For $i<j$, the subspace $M$ belongs to $\mathcal S_i$, so every member of
	$\mathcal X_i\setminus\mathcal X_i[X]$ contains a $t$-subspace of $M$
	different from $X$.
	By Lemma~\ref{2608182} (iii),
	$$
	|\mathcal X_i\setminus\mathcal X_i[X]|
	\leq\left({t+1\brack t}-1\right){k-t-i+1\brack1}^{k-t-i}
	=q{t\brack1}{k-t-i+1\brack1}^{k-t-i}.
	$$
	Using (\ref{2609251}),  we obtain $\mathcal{F}_{1}\subseteq \cup_{i=0}^{j-1}\mathcal{F}[\mathcal{X}_{i}\setminus \mathcal{X}_{i}[X]]$. It follows from $j\leq k-t-1$ that
	\begin{equation*}
		|\mathcal F_1|
		\leq q{t\brack1}\Phi_1(n,k,t,t,t,k-t-2).
	\end{equation*}
	Together with (\ref{2609253}), (\ref{2609256}) and Lemma~\ref{2609258},
	this proves $|\mathcal F|<m(n,k,t)$.
	
	Finally, suppose that $t+2\leq\dim(M)\leq k$.
	For every $F\in\mathcal F_1$,
	the inclusion $M\subseteq X+F$ gives $\dim(F\cap M)=\dim(M)-1$.
	There are $q^{\dim(M)-t}{t\brack1}$ hyperplanes of $M$ not containing $X$.
	For each such hyperplane $L$, the number of $k$-subspaces $F$ with
	$F\cap M=L$ is $q^{k-\dim(M)+1}{n-\dim(M)\brack k-\dim(M)+1}$.
	Hence, if $t+3\leq\dim(M)\leq k$, then
	\begin{equation*}
		|\mathcal F_1|
		\leq q^{k-t+1}{t\brack1}{n-\dim(M)\brack k-\dim(M)+1}.
	\end{equation*}
	If $\dim(M)=t+2$, then each such $L$ has dimension $t+1$
	and is not a $t$-cover of $\mathcal F$ by (\ref{2609255}).
	Every member of $\mathcal F$ meets $M$ in dimension at least $t$,
	and hence meets $L$ in dimension at least $t-1$.
	The preceding argument for $H$ gives
	$|\mathcal F[L]|\leq{k-t+1\brack1}{n-t-2\brack k-t-2}$, and thus
	$$
	|\mathcal F_1|
	\leq q^2{t\brack1}{k-t+1\brack1}{n-t-2\brack k-t-2}.
	$$
	Combining these estimates with  (\ref{2609253}), (\ref{2609256}) and using
	Lemma~\ref{2609257} completes the proof.
\end{proof}

  \begin{proof}[\textnormal{\textbf{Proof of Theorem \ref{main}}}]
	
Let  $\mathcal F\subseteq{V\brack k}$ be a nontrivial
$t$-intersecting family of maximum size. Then certainly $\mathcal F$ is maximal.
 By Lemmas \ref{2609122}--\ref{2609232}, \ref{2609121}, we know $\mathcal{F}=\mathcal{A}(Z)$ for some $Z\in{V\brack t+2}$, or  $\mathcal{F}=\mathcal{H}(X, M)$ for some $M\in{V\brack k+1}$ and $X\in{M\brack t}$. This together with Lemma \ref{2609131} finishes the proof.
\end{proof}
  
  \noindent{\bf Acknowledgment.}	
  B. Lv is supported by the National Natural Science Foundation of China (12571347, 12131011) and the Beijing Natural Science Foundation (1252010). K. Wang \\is supported by the National Natural Science Foundation of China (12571347, 12131011) and the Beijing Natural Science Foundation (1252010, 1262010). \medskip

  \noindent{\bf Declaration on the use of AI.}\; The authors used AI tools to assist with improving the language and deriving some   technical estimates. All AI-assisted revisions were reviewed and approved by the authors before being
  incorporated into the manuscript.

\appendix

\section{Technical estimates}\label{s4}

This appendix is devoted to proving some inequalities concerning  Gaussian binomial 
coefficients. The first one  is   the well-known $q$-Vandermonde identity.

\begin{Lemma}\label{2609151}
	Let $a$, $b$ and $c$  be non-negative integers with $a+b\geq c\geq 0$. Then
	$${a+b\brack c}=\sum_{i=0}^{c}q^{i(b-c+i)}{a\brack i}{b\brack c-i}.$$
\end{Lemma}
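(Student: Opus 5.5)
The statement is the $q$-Vandermonde identity. I would prove it by double counting $c$-subspaces of a fixed $(a+b)$-dimensional space $W=\mathbb F_q^{a+b}$. Fix a $b$-subspace $B\le W$ and an $a$-subspace $A$ with $W=A\oplus B$; write $\pi\colon W\to A$ for the projection along $B$. For a $c$-subspace $C$ I will use the invariant $\dim\pi(C)$ rather than $\dim(C\cap A)$, because it makes the fibre counts transparent. The number of $C$ with $\dim\pi(C)=i$ should equal $q^{i(b-c+i)}{a\brack i}{b\brack c-i}$. Summing over $i$ then gives ${a+b\brack c}$.

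Fix an $i$-subspace $P\le A$; there are ${a\brack i}$ choices. A subspace $C$ with $\pi(C)=P$ satisfies $C\le P\oplus B$. Its kernel part $K=C\cap B$ has dimension $c-i$, which gives ${b\brack c-i}$ choices for $K$. Given $K$, the space $C$ is determined by the image $C/K$ in $(P\oplus B)/K\cong P\oplus(B/K)$. This image must be a complement-type graph over $P$: it is the graph of a linear map $P\to B/K$. Distinct maps give distinct $C$, and every such graph occurs. The number of such maps is $q^{i(b-(c-i))}=q^{i(b-c+i)}$. Multiplying the three factors yields the count claimed above.

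The sum runs over $0\le i\le c$. Terms with $i>a$ or $c-i>b$ vanish by the convention ${a\brack b}=0$, consistent with no such subspaces existing, and the hypothesis $a+b\ge c$ ensures the left-hand side is meaningful.

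The only point needing care is the bijection between subspaces $C$ with $\pi(C)=P$ and $C\cap B=K$ on one side, and linear maps $P\to B/K$ on the other. The map direction is clear. For the inverse, $C$ is recovered as the preimage of the graph. Surjectivity of $\pi|_C$ onto $P$ and injectivity modulo $K$ give the graph property. I expect this step to be routine.

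Alternatively, one could induct on $a$ using the $q$-Pascal rule ${m\brack c}={m-1\brack c-1}+q^{c}{m-1\brack c}$, but the counting argument is shorter and I would use it.
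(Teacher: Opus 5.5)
The paper gives no proof of this lemma; it only cites it as the well-known $q$-Vandermonde identity. Your double-counting argument is correct and complete, so it supplies a self-contained proof where the paper has none.

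You classify each $c$-subspace $C$ of $A\oplus B$ by $P=\pi(C)$ and $K=C\cap B$. Then $C/K$ is a complement of $B/K$ in $P\oplus(B/K)$, since $\pi$ induces an isomorphism $C/K\to P$ and $(C/K)\cap(B/K)=(C\cap B)/K=0$. Such complements are exactly the graphs of linear maps $P\to B/K$, and the preimage of any such graph has dimension $c$, projects onto $P$, and meets $B$ in $K$. This gives exactly $q^{i(b-c+i)}$ subspaces for each of the ${a\brack i}{b\brack c-i}$ pairs $(P,K)$, and every $C$ is counted once.

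Using $\dim\pi(C)$ rather than $\dim(C\cap A)$ is what produces the exponent $i(b-c+i)$. The other invariant $j=\dim(C\cap A)$ gives the equivalent form $\sum_j q^{(a-j)(c-j)}{a\brack j}{b\brack c-j}$, which is the same identity with the roles of $a$ and $b$ swapped.

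The only caveat is that a counting proof establishes the identity only when $q$ is a prime power. That is all the paper ever uses. For arbitrary $q$ you can note that both sides are polynomials in $q$ agreeing at infinitely many values, or use the $q$-Pascal induction you mention.
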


From now on, lowercase letters   denote positive integers, unless otherwise stated.  The following lemma can be easily  verified.
 
\begin{Lemma}\label{2604282}
	Let  $n>k$. Then
	$$q^{n-k}<\frac{q^{n}-1}{q^{k}-1}< \frac{q^{n-k+1}}{q-1}, \ \ q^{k(n-k)}<{n\brack k}<\frac{q^{k(n-k+1)}}{(q-1)^{k}}.$$
\end{Lemma}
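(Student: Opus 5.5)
The statement to prove is Lemma \ref{2604282}: for $n>k$,
$$q^{n-k}<\frac{q^n-1}{q^k-1}<\frac{q^{n-k+1}}{q-1},\qquad q^{k(n-k)}<{n\brack k}<\frac{q^{k(n-k+1)}}{(q-1)^k}.$$
The plan is to prove the first chain by cross-multiplying, and then to get the second chain from the first. The product formula ${n\brack k}=\prod_{i=0}^{k-1}\frac{q^{n-i}-1}{q^{k-i}-1}$ writes the Gaussian binomial coefficient as a product of $k$ ratios of exactly the form in the first chain.

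For the lower bound in the first chain, we use $q^{n-k}(q^k-1)=q^n-q^{n-k}<q^n-1$. This holds because $n>k$ gives $q^{n-k}>1$. Dividing by $q^k-1>0$ yields $q^{n-k}<\frac{q^n-1}{q^k-1}$.

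For the upper bound in the first chain, we need $(q^n-1)(q-1)<q^{n-k+1}(q^k-1)$. Expanding gives
$$(q^n-1)(q-1)=q^{n+1}-q^n-q+1,\qquad q^{n-k+1}(q^k-1)=q^{n+1}-q^{n-k+1}.$$
So the inequality is equivalent to $q^{n-k+1}+1<q^n+q$. Rearranged, this reads $(q^{n-k}-1)q<q^n-1$, or $q^{n-k+1}-q<q^n-1$. When $k\ge 2$, it suffices to note $q^{n-k+1}\le q^{n-1}<q^n-q+1$. When $k=1$, the inequality becomes $q^n-q<q^n-1$, which is true since $q\ge2$. In fact the single factored form $q^n+q-q^{n-k+1}-1=(q^{n-k+1}-1)(q^{k-1}-1)+(q-1)q^{n-k+1}\cdot 0+\dots$ is not needed; the two cases above suffice.

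For the second chain, apply the first chain to each factor $\frac{q^{n-i}-1}{q^{k-i}-1}$ with $0\le i\le k-1$. This is valid because $n-i>k-i\ge1$. Each factor then lies strictly between $q^{n-k}$ and $\frac{q^{n-k+1}}{q-1}$. Multiplying the $k$ strict inequalities between positive numbers gives
$$q^{k(n-k)}<{n\brack k}<\frac{q^{k(n-k+1)}}{(q-1)^k}.$$

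The main obstacle, such as it is, is the upper bound in the first chain. It requires noticing that the $k=1$ case must be handled separately, because there $q^{n-k+1}=q^n$ is not small compared with $q^n$.
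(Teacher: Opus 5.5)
Your proof is correct, and it is the routine verification the paper has in mind when it says the lemma ``can be easily verified'': cross-multiply to get the bounds on $\frac{q^n-1}{q^k-1}$, then apply them factorwise (valid because $n-i>k-i\ge 1$, which uses $k\ge 1$) to the product formula for ${n\brack k}$. The only blemish is the garbled sentence containing ``$\cdot 0+\dots$'', which is not a valid identity as written. Delete it, or replace it by the correct identity $q^n+q-1-q^{n-k+1}=q^{n-k+1}(q^{k-1}-1)+(q-1)>0$, which also removes the need for your case split between $k=1$ and $k\ge 2$.
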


\begin{Lemma}\label{2608201}
   The following holds.
   $${t+4\brack 2}>\max\left\{q^{6}{t+1\brack 2}+q^{4}{3\brack 1}{t-1\brack 1}+{3\brack 1}{2\brack 1}^{2},\ {2\brack 1}{3\brack 1}^{2}+q^{4}{t-1\brack 1}\right\}. $$
\end{Lemma}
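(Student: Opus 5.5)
We need to prove Lemma \ref{2608201}: ${t+4\brack 2}$ exceeds both expressions, for all $t\geq 1$ and prime powers $q\geq 2$.

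The plan is to write everything in closed form and compare leading powers of $q$. First expand
$$
{t+4\brack 2}=\frac{(q^{t+4}-1)(q^{t+3}-1)}{(q^2-1)(q-1)} .
$$
Its dominant term is $q^{2t+7}/((q^2-1)(q-1))$, which is at least roughly $q^{2t+4}$. Next compare it term by term with the first expression:
\begin{itemize}
\item $q^6{t+1\brack 2}=q^6(q^{t+1}-1)(q^t-1)/((q^2-1)(q-1))$ has leading order $q^{2t+7}/((q^2-1)(q-1))$, the same as the left side.
\item So the whole point is the exact difference
$$
{t+4\brack 2}-q^6{t+1\brack 2}=\frac{(q^{t+4}-1)(q^{t+3}-1)-q^6(q^{t+1}-1)(q^t-1)}{(q^2-1)(q-1)}.
$$
The numerator simplifies to $q^{t+6}(q+1)-q^{t+4}(q+1)+1-q^6=q^{t+4}(q+1)(q^2-1)-(q^6-1)$. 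Hence the difference equals
$$
\frac{q^{t+4}(q+1)(q^2-1)-(q^6-1)}{(q^2-1)(q-1)}=q^{t+4}{2\brack 1}^2/{2\brack1}\cdot\frac{1}{1}\ \text{(roughly)}=\frac{q^{t+4}(q+1)}{q-1}-\frac{q^4+q^2+1}{q-1}.
$$
\end{itemize}
It then remains to check that
$$
\frac{q^{t+4}(q+1)-(q^4+q^2+1)}{q-1}>q^4{3\brack1}{t-1\brack1}+{3\brack1}{2\brack1}^2 .
$$
Here $q^4{3\brack1}{t-1\brack1}<q^4(q^2+q+1)q^{t-1}/(q-1)=(q^{t+5}+q^{t+4}+q^{t+3})/(q-1)$. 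The left numerator is $q^{t+5}+q^{t+4}-q^4-q^2-1$, so the $q^{t+5}+q^{t+4}$ terms cancel and we are left needing to beat roughly $q^{t+3}/(q-1)$ with nothing. This means the crude bound on ${t-1\brack1}$ loses too much, so that term must be computed exactly instead:
$$
q^4{3\brack1}{t-1\brack 1}=\frac{q^4(q^2+q+1)(q^{t-1}-1)}{q-1}.
$$
Multiplying everything by $q-1$, the required inequality becomes
$$
q^{t+5}+q^{t+4}-q^4-q^2-1>(q^{t+5}+q^{t+4}+q^{t+3})-(q^6+q^5+q^4)+(q-1)(q^2+q+1)(q+1)^2 .
$$
That is,
$$
q^6+q^5>q^{t+3}+q^2+1+(q^3-1)(q+1)^2 .
$$
This is false for large $t$. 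So either I misread the statement or the first expression grows too fast. I will recheck: ${t+1\brack 2}$ versus ${t+4\brack2}$ differ by a factor of about $q^6$, so $q^6{t+1\brack2}$ is essentially equal to the left side. If the inequality is truly intended, the $q^6$ factor is tight, and the computation above must be redone carefully, since the exact difference ${t+4\brack2}-q^6{t+1\brack2}$ is the crux.

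The key step, and the main obstacle, is this exact identity. I will verify it via the $q$-Pascal/Vandermonde Lemma \ref{2609151} with $(a,b,c)=(3,t+1,2)$:
$$
{t+4\brack2}=q^{6}{t+1\brack 2}\cdot q^{0}\ \text{(the $i=2$ term gives } q^{2(t+1)}{3\brack2}\text{)}\ \dots
$$
I will instead use $(a,b,c)=(t+1,3,2)$:
$$
{t+4\brack 2}=\sum_{i=0}^2 q^{i(1+i)}{t+1\brack i}{3\brack 2-i}={3\brack2}+q^2{t+1\brack1}{3\brack1}+q^6{t+1\brack2}.
$$
This gives the clean difference $q^2{3\brack1}{t+1\brack1}+{3\brack1}$. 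Now compare with the remaining pieces:
\begin{itemize}
\item $q^2{3\brack1}{t+1\brack1}=q^2{3\brack1}(q^2{t-1\brack1}+q+1)$. This equals $q^4{3\brack1}{t-1\brack1}+q^2{3\brack1}{2\brack1}$, which exactly matches the $q^4$ term plus $q^2{3\brack1}{2\brack1}$.
\item So the first inequality reduces to $q^2{3\brack1}{2\brack1}+{3\brack1}>{3\brack1}{2\brack1}^2$, i.e.\ $q^3+q^2+1>q^2+2q+1$, i.e.\ $q^3>2q$. This is true for $q\geq2$.
\end{itemize}
For the second expression, since $q^2{2\brack1}\ge{2\brack1}^2$ and $q^6{t+1\brack2}\ge0$, the same identity gives
$$
{t+4\brack2}\ge q^4{3\brack1}{t-1\brack1}+q^2{3\brack1}{2\brack1}+{3\brack1}.
$$
This dominates $q^4{t-1\brack1}+{2\brack1}{3\brack1}^2$ once ${2\brack1}{3\brack1}^2<q^2{3\brack1}{2\brack1}+{3\brack1}+q^4(q^2+q){t-1\brack1}$. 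For $t\geq2$ this holds easily. For $t=1$ it needs $(q+1)(q^2+q+1)<q^2(q+1)+1$, which is false, so the $q^6{t+1\brack2}$ term is needed only when $t\ge1$. When $t=1$, ${2\brack2}=1$, so the term $q^6$ is added, and $q^6$ easily exceeds $(q+1)(q^2+q+1)$. This finishes the plan.
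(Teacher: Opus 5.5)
\textbf{Verdict.} Your proof of the first inequality is correct and is the paper's argument. Your proof of the second inequality takes a different route and is sound in outline, but the $t=1$ check compares against the wrong quantity and has to be redone.

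\textbf{First inequality.} You specialize the $q$-Vandermonde identity at $(a,b,c)=(t+1,3,2)$, which gives
\[
{t+4\brack 2}=q^6{t+1\brack 2}+q^4{3\brack 1}{t-1\brack 1}+q^2{2\brack 1}{3\brack 1}+{3\brack 1},
\]
and the comparison reduces to $q^3>2q$. This is exactly what the paper does. Delete the exploratory closed-form computation that precedes it. The numerator of ${t+4\brack 2}-q^6{t+1\brack 2}$ is $q^{t+6}(q+1)-q^{t+3}(q+1)+1-q^6$, not one containing $q^{t+4}(q+1)$. With the correct exponent the direct route gives the same difference ${3\brack 1}\bigl(q^2{t+1\brack 1}+1\bigr)$ and works. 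Your ``false for large $t$'' alarm came only from that slip.

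\textbf{Second inequality: comparison of routes.} You reuse the same identity and split into $t\ge 2$ and $t=1$. The paper instead uses the specialization $(a,b,c)=(t-1,5,2)$:
\[
{t+4\brack 2}={5\brack 2}+q^4{5\brack 1}{t-1\brack 1}+q^{10}{t-1\brack 2}\ \ge\ {5\brack 2}+q^4{t-1\brack 1}.
\]
This handles every $t$ at once and reduces the claim to the single numerical comparison ${5\brack 2}>{2\brack 1}{3\brack 1}^2$. In your route, the deficit to be covered in both cases is
\[
{2\brack 1}{3\brack 1}^2-q^2{2\brack 1}{3\brack 1}-{3\brack 1}={3\brack 1}\bigl({2\brack 1}^2-1\bigr)=q(q+2)(q^2+q+1).
\]
For $t\ge2$ it is covered by $q^5(q+1){t-1\brack 1}$, so that case is fine.

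\textbf{The error at $t=1$.} At $t=1$ the deficit must be covered by $q^6{2\brack 2}=q^6$. So you need $q^6>q(q+2)(q^2+q+1)$, not $q^6>(q+1)(q^2+q+1)$. The inequality you wrote is weaker and does not imply the claim. The correct one does hold for all $q\ge 2$; equivalently, $q^5>(q+2)(q^2+q+1)$. At $q=2$, however, it reads $64>56$, i.e.\ $155>147$. So it is tight rather than ``easy,'' and you must actually verify it. Note that this $t=1$ case is literally the paper's comparison ${5\brack 2}>{2\brack 1}{3\brack 1}^2$.

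\textbf{Smaller fixes.} ``Needed only when $t\ge1$'' should read $t=1$. The remark $q^2{2\brack 1}\ge{2\brack 1}^2$ plays no role in your argument and can be dropped.
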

\begin{proof}
	Substituting $(a,b,c)=(t+1,3,2)$ into Lemma \ref{2609151}, we obtain
	$${t+4\brack 2}=q^{6}{t+1\brack 2}+{3\brack 1}\left(q^{2}{t+1\brack 1}+1\right)>q^{6}{t+1\brack 2}+{3\brack 1}\left(q^{4}{t-1\brack 1}+{2\brack 1}^{2}\right).$$ 
	
	 Applying Lemma \ref{2609151} with $(a,b,c)=(t-1,5,2)$, we have 
	$${t+4\brack 2}={5\brack 2}+q^{4}{t-1\brack 1}{5\brack 1}+q^{10}{t-1\brack 2}\geq {5\brack 2}+q^{4}{t-1\brack 1}>{2\brack 1}{3\brack 1}^{2}+q^{4}{t-1\brack 1}.$$
	This completes the proof.
\end{proof}

Recall that $\psi(k,t,a,i)$ and $\varphi(k,t,a,b,i)$ are defined in (\ref{2609111}) and (\ref{2609112}), respectively;   $\Phi_{1}(n,k,t,a,b,j)$ and $\Phi_{2}(n,k,t,a,j)$ are defined in (\ref{2609115}) and (\ref{2609113}), respectively.

We  adopt the convention that  an empty sum and an empty product are equal to $0$ and $1$, respectively. 
For convenience,  write $$r=k-t,\ \ x=\max\{t+2, r+1\}. $$

\begin{Lemma}\label{2609152}
	Let $k\geq t+3$ and $n\geq 2k+1$. Then 
	$$\frac{{r+1\brack 1}^{r-1}}{{n-t-1\brack r-1}}\leq \frac{q^{r+1}-1}{q^{r+1}}\cdot \frac{1}{(q-1)^{r-1}q^{(r-1)(n-2k+t-1)}}\cdot\prod_{i=1}^{r-1}\frac{q^{i}-1}{q^{i}}.$$
\end{Lemma}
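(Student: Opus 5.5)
We need to bound ${r+1\brack 1}^{r-1}/{n-t-1\brack r-1}$, with $r=k-t\ge 3$ and $n\ge 2k+1$. We expand both Gaussian coefficients into explicit products and compare them factor by factor. The numerator is
$$\frac{(q^{r+1}-1)^{r-1}}{(q-1)^{r-1}}.$$
Since $n-t-1-(r-1)=n-k$, the denominator is
$$\prod_{i=1}^{r-1}\frac{q^{n-k+i}-1}{q^{i}-1}.$$
Hence the ratio equals
$$\frac{1}{(q-1)^{r-1}}\prod_{i=1}^{r-1}(q^i-1)\cdot\prod_{i=1}^{r-1}\frac{q^{r+1}-1}{q^{n-k+i}-1}.$$

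After this rewriting the factors $1/(q-1)^{r-1}$ and $\prod(q^i-1)$ already appear on the right-hand side, so it remains to show
$$\prod_{i=1}^{r-1}\frac{q^{r+1}-1}{q^{n-k+i}-1}\le \frac{q^{r+1}-1}{q^{r+1}}\cdot q^{-(r-1)(n-2k+t-1)}\prod_{i=1}^{r-1}q^{-i}.$$

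We keep one factor $q^{r+1}-1$ exactly, taking it from the $i=1$ term. For that term we need
$$\frac{q^{r+1}-1}{q^{n-k+1}-1}\le \frac{q^{r+1}-1}{q^{r+1}}\,q^{-(n-2k+t-1)}q^{-1},$$
that is, $q^{n-k+1}-1\ge q^{r+1+n-2k+t}=q^{n-k+1}$. This fails by exactly $1$, so the term has to be paired with another.

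To fix this, we use $q^{r+1}-1<q^{r+1}$ in every factor with $i\ge 2$. For each such $i$, the target exponent per factor is $r+1-(n-k+i)$, while the right-hand side supplies $-(n-2k+t-1)-i=r+1-(n-k+i)$. So the exponents match exactly, and each factor satisfies
$$\frac{q^{r+1}}{q^{n-k+i}-1}\ \text{versus}\ q^{r+1-(n-k+i)}.$$
This is again too large, because $q^{n-k+i}-1<q^{n-k+i}$. Replacing $q^{r+1}-1$ by $q^{r+1}$ gives no slack either, since the exponents are equal.

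So a cruder factor-by-factor bound is not enough. We need the product inequality
$$\prod_{i=1}^{r-1}\frac{q^{r+1}-1}{q^{n-k+i}-1}\cdot q^{\sum_i(n-k+i)-(r-1)(r+1)}\le \frac{q^{r+1}-1}{q^{r+1}},$$
and the left-hand side can be written as
$$\prod_{i=1}^{r-1}\frac{1-q^{-(r+1)}}{1-q^{-(n-k+i)}}.$$
For every $i$ we have $n-k+i\ge k+1+i>r+1$ because $t\ge1$. Hence $1-q^{-(n-k+i)}\ge 1-q^{-(r+1)}$ for each $i$, so each factor is at most $1$.

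The right-hand side $1-q^{-(r+1)}$ is strictly less than $1$, so one factor must be isolated to absorb it. Take the $i=1$ factor and bound it by $\bigl(1-q^{-(r+1)}\bigr)/\bigl(1-q^{-(k+2)}\bigr)$. We then need
$$\frac{1}{1-q^{-(k+2)}}\prod_{i\ge2}\frac{1-q^{-(r+1)}}{1-q^{-(n-k+i)}}\le 1.$$
For $r-1\ge2$, the $i=2$ factor is at most $\bigl(1-q^{-(r+1)}\bigr)/\bigl(1-q^{-(r+3)}\bigr)$ in the worst case. We then check
$$\bigl(1-q^{-(r+1)}\bigr)\le\bigl(1-q^{-(k+2)}\bigr)\bigl(1-q^{-(k+3)}\bigr),$$
which holds because $q^{-(r+1)}\ge q^{-(k+2)}+q^{-(k+3)}$ when $k+2\ge r+2$. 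The remaining factors are each at most $1$.

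The hypothesis $k\ge t+3$ gives $r\ge3$, which guarantees that an $i=2$ factor exists. Getting this bookkeeping right, with the exponents matching exactly and the $1-q^{-(r+1)}$ loss absorbed, is the main obstacle.
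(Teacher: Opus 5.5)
Your proof is correct and follows essentially the paper's route: the same explicit expansion reduces the claim to $\prod_{i=1}^{r-1}\frac{1-q^{-(r+1)}}{1-q^{-(n-k+i)}}\le 1-q^{-(r+1)}$. The only difference is the finish. The paper bounds the full denominator product below by $1-q^{-(r+1)}$ via the Weierstrass product inequality and then uses $(1-q^{-(r+1)})^{r-2}\le 1-q^{-(r+1)}$, whereas you bound the factors with $i\ge 3$ by $1$ and let the $i=1,2$ factors absorb the loss; both versions need $r\ge 3$ for the same reason. (Your final check rightly uses $1-q^{-(k+3)}$, justified by $n-k+2\ge k+3$, rather than the weaker $1-q^{-(r+3)}$ you stated just before it.)
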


\begin{proof}
	By routine computations, we know that ${r+1\brack 1}^{r-1}/{n-t-1\brack r-1}$ is equal to
	\begin{equation*}
		\begin{aligned}
			\frac{1}{(q-1)^{r-1}q^{(r-1)(n-2k+t-1)}}\cdot\left(
			\frac{q^{r+1}-1}{q^{r+1}}\right)^{r-1}\cdot \prod_{i=1}^{r-1}\frac{q^{i}-1}{q^{i}}\prod_{i=1}^{r-1}\frac{q^{n-k+i}}{q^{n-k+i}-1}.
		\end{aligned}
	\end{equation*}
	Applying the Weierstrass product inequality, we get 
	$$\prod_{i=1}^{r-1}\left(1-\frac{1}{q^{n-t-i}}\right)\geq 1-\sum_{i=1}^{r-1}\frac{1}{q^{n-t-i}}\geq 1-\frac{1}{q^{n-k}(q-1)} \geq 1-\frac{1}{q^{r+1}}.$$
	It follows from $r\geq 3$ that the desired result holds.
\end{proof}

\begin{Lemma}\label{2609153}
	Let $y\geq 1$, $k\geq t+3$ and $n\geq 2k+1$. Then 
	$$\frac{\varphi(k,t,t+1,k,0)}{{y\brack 1}{n-t-1\brack r-1}}\leq \frac{q^{t+1}-1}{q^{y}-1}\cdot\frac{(q^{2}-1)(q^{3}-1)}{q^{4}(q-1)^{r-1}}\cdot\frac{1}{q^{(r-1)(n-2k+t-2)}}.$$
	Moreover,  for $t+2\leq a\leq k-1$, we have 
	$$\frac{\varphi(k,t,a,k,0)}{{y\brack 1}{n-t-1\brack r-1}}\leq \frac{q^{r+1}-1}{q^{y}-1}\cdot \frac{(q^{2}-1)(q^{3}-1)(q^{4}-1)}{q^{9}(q-1)^{r-1}}\cdot\frac{1}{q^{(r-1)(n-2k-1)+t(k-a-1)}}, $$
	where   $ (q^{2}-1)(q^{3}-1)(q^{4}-1)/\bigl(q^{9}(q-1)^{r-1}\bigr)$ is decreasing in $q$, and it is at most $315/512$.
\end{Lemma}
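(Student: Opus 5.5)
We expand $\varphi(k,t,a,k,0)=\sum_{j=0}^{t}q^{2j^2}{t\brack j}{a-t\brack j}{k-t\brack j}{r+1\brack 1}^{r-j}$ and divide termwise by ${y\brack 1}{n-t-1\brack r-1}$. The idea is to treat the factor ${r+1\brack 1}^{r-1}/{n-t-1\brack r-1}$ with Lemma \ref{2609152}. What remains in each term is a bounded quantity, $q^{2j^2}{t\brack j}{a-t\brack j}{r\brack j}{r+1\brack 1}^{1-j}/{y\brack 1}$. The $j=0$ term contributes ${r+1\brack 1}/{y\brack 1}=(q^{r+1}-1)/(q^y-1)$. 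This explains the prefactor $(q^{r+1}-1)/(q^y-1)$ in the second bound. In the first bound the prefactor is $(q^{t+1}-1)/(q^y-1)$ instead, and that case has to be organized differently, as described next.

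\emph{First inequality ($a=t+1$).} Here ${a-t\brack j}={1\brack j}$, so only $j=0,1$ survive. This gives
$$\varphi(k,t,t+1,k,0)={r+1\brack 1}^{r}+q^{2}{t\brack 1}{r\brack 1}{r+1\brack 1}^{r-1}.$$
Factor out ${r+1\brack 1}^{r-1}$. The bracket is ${r+1\brack 1}+q^2{t\brack 1}{r\brack 1}$. We bound it by a multiple of ${t+1\brack 1}q^{r}$-type quantities using Lemma \ref{2604282}. We then combine with Lemma \ref{2609152}. The factor $\prod_{i=1}^{r-1}(q^i-1)/q^i\le (q-1)(q^2-1)/q^3$ for $r\ge3$ supplies the $(q^2-1)(q^3-1)/q^4$ shape, using one spare power of $q^{r-1}$, i.e.\ converting $q^{(r-1)(n-2k+t-1)}$ into $q^{(r-1)(n-2k+t-2)}$ to absorb the $q^{r}$ growth of the bracket. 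The bookkeeping is to check that the bracket divided by $q^{r-1}(q-1)$ is at most $(q^{t+1}-1)\cdot(\text{const})$. This is elementary once written out.

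\emph{Second inequality ($t+2\le a\le k-1$).} The prefactor in Lemma \ref{2609152} carries $q^{-(r-1)(n-2k+t-1)}$, while the target has $q^{-(r-1)(n-2k-1)-t(k-a-1)}$. This leaves a budget of $q^{(r-1)t-t(k-a-1)}=q^{t(a-t)}$, since $r-1-(k-a-1)=a-t$. That budget is exactly the size of the growth of ${a-t\brack j}{t\brack j}$ terms. So we bound
$$\sum_{j}q^{2j^2}{t\brack j}{a-t\brack j}{r\brack j}{r+1\brack 1}^{-j}\lesssim q^{t(a-t)}\cdot c(q),$$
using ${r\brack j}\le{r+1\brack 1}^j q^{-j^2}$-type estimates (as in the proof of Lemma \ref{2609061}(i)) and ${t\brack j}{a-t\brack j}q^{j^2}\le {a\brack t}\le q^{t(a-t)}\prod_i q^i/(q^i-1)$ via the $q$-Vandermonde identity (Lemma \ref{2609151}). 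The leftover constants, from $\prod_{i\ge1}(1-q^{-i})^{-1}$ for ${a\brack t}$ together with the $\prod_{i=1}^{r-1}(1-q^{-i})$ and $(q-1)^{-(r-1)}$ from Lemma \ref{2609152}, must collapse to $(q^2-1)(q^3-1)(q^4-1)/(q^9(q-1)^{r-1})$. Since $r\ge3$, truncating the product $\prod_{i=1}^{r-1}(1-q^{-i})$ at three factors and cancelling the first against part of the ${a\brack t}$ constant should produce this. The $q^{-9}$ uses $q^{1+2+3+\dots}$ leftovers.

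Finally, monotonicity: $(q^2-1)(q^3-1)(q^4-1)/(q^9(q-1)^{r-1})$ is a product of the factors $(1-q^{-2})(1-q^{-3})(1-q^{-4})$, each increasing but bounded by $1$, times $(q-1)^{-(r-1)}$ with $r-1\ge2$. Compute the logarithmic derivative, or compare $q$ with $q+1$ directly, to see the whole expression decreases. At $q=2$ it equals $3\cdot7\cdot15/512=315/512$ times $1^{-(r-1)}$.

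The main obstacle is the second inequality's constant matching. The precise cancellation giving $q^{-9}$ and the three factors requires careful choice of which crude bound (Lemma \ref{2604282} vs.\ exact products) to use for each Gaussian coefficient. I would first try bounding ${a\brack t}$ exactly by the Vandermonde sum and only estimating ${r\brack j}$ crudely.
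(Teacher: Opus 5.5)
\textbf{Second inequality: the proposed estimates are too weak.} This is a genuine gap, not a bookkeeping issue. Using $q^{j^2}{r\brack j}\le{r+1\brack 1}^j$ termwise and then $q$-Vandermonde gives exactly $\varphi(k,t,a,k,0)\le{a\brack t}{r+1\brack1}^r=\psi(k,t,a,0)$, and $\psi$ violates the target. Take $q=2$, $t=2$, $a=4$, $k=5$ (so $r=3$), $n=11$. Then $(q^y-1)$ times the right-hand side is $15\cdot315/512\approx9.23$, whereas $\psi(5,2,4,0)/{8\brack2}=118125/10795\approx10.94$. The true value $\varphi(5,2,4,5,0)/{8\brack2}=86955/10795\approx8.06$ is fine. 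Keeping ${a\brack t}$ exact does not help, since the example already uses ${4\brack2}=35$ exactly. The loss is in the ${r\brack j}$ step: $q^{j^2}{r\brack j}/{r+1\brack1}^j\approx(1-q^{-1})^j/\prod_{i=1}^{j}(1-q^{-i})$, which is already about $q/(q+1)$ at $j=2$. When $t,a-t\ge2$, the terms with $j\ge2$ are a fixed proportion of $\varphi$, and the target is sharp to leading order.

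The missing idea is the paper's normalization $\varphi(k,t,a,k,0)/{r+1\brack1}^r=\sum_j A_j\prod_{i=1}^j B_i$. Here $A_j$ is the number of $t\times(a-t)$ matrices of rank $j$ over $\mathbb F_q$, so $\sum_j A_j=q^{t(a-t)}$ exactly, with no $\prod(1-q^{-i})^{-1}$ loss, and $B_i=q^{3i-1}(q-1)(q^{r-i+1}-1)/\bigl((q^{r+1}-1)(q^i-1)^2\bigr)$.
\begin{itemize}
\item One shows $B_i\le1-1/\bigl((q-1)q^r\bigr)$ for $2\le i\le\min\{t,a-t\}$.
\item This surplus is spent, via $\sum_{j\ge2}A_j\ge\frac{q-1}{q}\bigl(q^{t(a-t)}-q^{a-t}\bigr)$, to absorb the $j=0$ term. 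The result is $\varphi(k,t,a,k,0)\le{r+1\brack1}^r\frac{q^r-1}{q^{r-1}(q-1)}q^{t(a-t)}$.
\item Since $\frac{q^r-1}{q^{r-1}(q-1)}=\frac{q}{q-1}(1-q^{-r})$, combining with Lemma~\ref{2609152} leaves $\prod_{i=2}^{r+1}(1-q^{-i})\le(1-q^{-2})(1-q^{-3})(1-q^{-4})$ for $r\ge3$.
\end{itemize}
That is where $q^{-9}$ and the three factors come from.

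\textbf{First inequality: both shortcuts lose a needed factor.} This inequality is also sharp to leading order, so neither shortcut you name closes the chain.
\begin{itemize}
\item Truncating $\prod_{i=1}^{r-1}(1-q^{-i})$ after two factors discards the factor $1-q^{-3}$. For $r\ge4$ the final comparison then fails for every $t$.
\item Bounding ${r\brack1}<q^r/(q-1)$ via Lemma~\ref{2604282} discards $1-q^{-r}$, which for $r=3$ is exactly the needed $1-q^{-3}$.
\end{itemize}
The paper keeps ${r\brack1}=(q^r-1)/(q-1)$ exact. It uses $q{t+1\brack1}{r\brack1}+1\le q{t+1\brack1}{r\brack1}\cdot q^{r+1}/(q^{r+1}-1)$, which cancels the factor $(q^{r+1}-1)/q^{r+1}$ of Lemma~\ref{2609152}. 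It then bounds $\prod_{i=1}^{r}(1-q^{-i})\le\prod_{i=1}^{3}(1-q^{-i})$, spending one power $q^{r-1}$ as you anticipated.
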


\begin{proof}
  Recall that $\varphi(k,t,t+1,k,0)={r+1\brack 1}^{r-1}\left(q{t+1\brack 1}{r\brack 1}+1\right)$. Since $q{t+1\brack 1}{r\brack 1}+1$ is at most $q{t+1\brack 1}{r\brack 1}\left(1+1/ (q^{r+1}-1)\right)$, we have  
	\begin{equation*}
		\frac{\varphi(k,t,t+1,k,0)}{{y\brack 1}{n-t-1\brack r-1}}\leq \frac{q^{t+1}-1}{q^{y}-1}\cdot\frac{q^{r+1}}{(q-1)^{r}q^{(r-1)(n-2k+t-1)}}\cdot \prod_{i=1}^{r}\frac{q^{i}-1}{q^{i}}. 
	\end{equation*}
	It follows from $r\geq 3$ that the first inequality holds.
	
	 During the following proof, for $0\leq j\leq t$ and $1\leq i\leq t$,  write 
	$$A_{j}=q^{\binom{j}{2}}{t\brack j}{a-t\brack j}\prod_{i=1}^{j}(q^{i}-1),\ \  B_{i}=\frac{q^{3i-1}(q-1)(q^{r-i+1}-1)}{(q^{r+1}-1)(q^{i}-1)^{2}}. $$
	In this notation,  $\varphi(k,t,a,k,0)/ {r+1\brack 1}^{r}=\sum_{j=0}^{t}A_{j}\prod_{i=1}^{j}B_{i}$. 
	
	If $2\leq i\leq \min\{t, a-t\}$, then by $(q^{i}-1)^{2}/q^{2i}\geq (q^{2i}-2q^{i})/q^{2i}\geq 1-1/q$, we have 
	$$B_{i}=\frac{q^{2i-1}(q-1)}{(q^{i}-1)^{2}}\cdot\frac{q^{r+1}-q^{i}}{q^{r+1}-1}\leq \frac{q^{r+1}-q^{i}}{q^{r+1}-1}\leq 1-\frac{q^{2}-1}{q^{r+1}-1}\leq 1-\frac{1}{(q-1)q^{r}}\leq 1. $$
	It is well known that the number of $t\times (a-t)$ matrices over
	$\mathbb F_{q}$ of rank $j$ is $A_{j}$. This implies 
	$$\sum_{j=2}^{t}A_{j}=q^{(a-t)t}-A_{0}-A_{1}=(q^{a-t}-1)\sum_{i=1}^{t-1}(q^{(a-t)i}-q^{i})\geq \frac{q-1}{q}(q^{a-t}-1)\sum_{i=1}^{t-1}q^{(a-t)i},$$
	and the RHS of this inequality  equals $\frac{q-1}{q}\left(q^{t(a-t)}-q^{a-t}\right)$. 
	Combining these bounds gives
	\begin{equation*}
		\begin{aligned}
			&\ \frac{\varphi(k,t,a,k,0)}{{r+1\brack 1}^{r}}=  1+A_{1}B_{1}+\sum_{j=2}^{t}A_{j}\prod_{i=1}^{j}B_{i}\\
			\leq&\  1+A_{1}B_{1}+B_{1}\left(1-\frac{1}{(q-1)q^{r}}\right)\sum_{j=2}^{t}A_{j}
			=1+B_{1}\left(q^{t(a-t)}-1\right)-\frac{B_{1}}{(q-1)q^{r}}\sum_{j=2}^{t}A_{j}\\
			\leq&\  1+B_{1}\left(q^{t(a-t)}-1\right)-\frac{B_{1}}{q^{r+1}}\left(q^{t(a-t)}-q^{a-t}\right)=B_{1}\frac{(q^{r+1}-1)q^{t(a-t)}}{q^{r+1}}+1-B_{1}\left(1-\frac{q^{a-t}}{q^{r+1}}\right).
		\end{aligned}
	\end{equation*}
	Since $a-t\leq r-1$, we have $B_{1}\left(1-\frac{q^{a-t}}{q^{r+1}}\right)\geq B_{1}(1-\frac{1}{q^{2}})=(q+1)(q^{r}-1)/ (q^{r+1}-1)>1$. Thus $\varphi(k,t,a,k,0)/ {r+1\brack 1}^{r}\leq  B_{1}(q^{r+1}-1)q^{t(a-t)}/ q^{r+1}$. Note that 
	$$\frac{\varphi(k,t,a,k,0)}{{y\brack 1}{n-t-1\brack r-1}}=\frac{{r+1\brack 1}}{{y\brack 1}}\cdot \frac{\varphi(k,t,a,k,0)}{{r+1\brack 1}^{r}}\cdot\frac{{r+1\brack 1}^{r-1}}{{n-t-1\brack r-1}}.$$It follows from Lemma \ref{2609152} and $r\geq3$ that the second inequality holds. 
	
	In particular,  $(q^{2}-1)(q^{3}-1)(q^{4}-1)/(q^{9}(q-1)^{2})$  is   equal  to $$ \frac{q-1}{q^{2}}\left(1+\frac{1}{q}\right)^{2}\left(1+\frac{1}{q^{2}}\right)\left(\frac{1}{q}+\frac{1}{q^{2}}+\frac{1}{q^{3}}\right).$$ 
	By $r\geq 3$,  the claimed monotonicity in $q$ also follows.
\end{proof}

Let $k\geq t+2$, $n\geq 2k+1$, $1\leq i\leq r-1$.  By ${k-i-1\brack t}={k-i-1\brack r-i-1}$,  ${r-1\brack r-i-1}={r-1\brack i}$, we get
\begin{equation*}
	\frac{{k-i-1\brack t}{n-k+i\brack i}}{{n-t-1\brack r-1}}=\prod_{j=1}^{i}\frac{q^{r-j}-1}{q^{i-j+1}-1}\prod_{j=1}^{r-i-1}\frac{q^{k-i-j}-1}{q^{n-t-j}-1}.
\end{equation*}
Then $\psi(k,t,k-i-1, i){n-k+i\brack i}/\left({x\brack 1}{n-t-1\brack r-1}\right)$ is equal to
\begin{equation}\label{2609161}
	\frac{(q^{r-i+1}-1)^{r-i}}{(q-1)^{r-i-1}(q^{x}-1)}\prod_{j=1}^{i}\frac{q^{r-j}-1}{q^{i-j+1}-1}\prod_{j=1}^{r-i-1}\frac{q^{k-i-j}-1}{q^{n-t-j}-1}.
\end{equation}

\begin{Lemma}\label{2608317}
	Let  $k\geq t+4$ and $n\geq 2k+1$. Then 
	$$\frac{\sum_{i=1}^{r-3}\psi(k,t,k-i-1, i){n-k+i\brack i}}{{x\brack 1}{n-t-1\brack r-1}}\leq \frac{1}{(q-1)^{r}}\cdot\frac{1}{q^{2(n-2k-1)}}\cdot \frac{1}{q^{r-3}} \left( 1-\frac{1}{q^{r-3}}\right)\cdot\frac{q^{r}-4}{q^{r}}\leq \frac{3}{16}.$$
	Moreover, we have $\sum_{i=1}^{r-3}\psi(k,t,t+1, i){n-k+i\brack i}\leq 9{x\brack 1}{n-t-1\brack r-1}/112$.
\end{Lemma}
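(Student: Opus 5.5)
We bound the sum term by term using the closed form \eqref{2609161}, which expresses the $i$-th summand divided by ${x\brack1}{n-t-1\brack r-1}$ as an explicit product. The idea is to show that the ratios decay geometrically in $i$ (or are dominated by a geometric series) and then sum.

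\emph{Step 1: bound a single ratio.} Take \eqref{2609161} for fixed $1\le i\le r-3$ and bound each factor.
\begin{itemize}
\item The first product satisfies $\prod_{j=1}^{i}\frac{q^{r-j}-1}{q^{i-j+1}-1}={r-1\brack i}$. Using Lemma \ref{2604282}, this is less than roughly $q^{i(r-1-i)}\prod_{j\ge1}(1-q^{-j})^{-1}$.
\item The second product has $r-i-1$ factors, each at most $q^{k-i-j}/q^{n-t-j}=q^{-(n-k-i+r)}$, using $k-t=r$ and $n-t-j>k-i-j$. This gives at most $q^{-(r-i-1)(n-k+r-i)}$.
\item The prefactor is $(q^{r-i+1}-1)^{r-i}/((q-1)^{r-i-1}(q^x-1))$. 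It is at most $q^{(r-i+1)(r-i)}/((q-1)^{r-i-1}q^{x})$, with $x\ge r+1$.
\end{itemize}
Combining the exponents with $n\ge 2k+1$, the power of $q$ in the $i$-th term works out to roughly $-(r-i-1)(n-2k-1)-\text{(something growing as $i$ decreases)}$. The largest term should be $i=r-3$, where only two factors remain in the second product. This is where the factor $q^{-2(n-2k-1)}$ in the claimed bound comes from.

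\emph{Step 2: sum over $i$.} Write the $i$-th term as the $(r-3)$-th term times $q^{-c(r-3-i)}$ with $c\ge1$. Then sum the geometric series $\sum_{m=0}^{r-4}q^{-m}$, which produces the factor $(1-q^{-(r-3)})/(1-q^{-1})$. The remaining factors $q^{-(r-3)}$, $(q^r-4)/q^r$ and $(q-1)^{-r}$ must be extracted carefully by keeping the "$-1$" corrections rather than discarding them. In particular, the $(q^r-4)/q^r$ factor presumably comes from the product $\prod(1-q^{-j})$ estimates via the Weierstrass product inequality, as in Lemma \ref{2609152}.

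For the numeric bound $3/16$, note the whole expression is decreasing in $n$ and in $q$. Evaluate at $q=2$ and $n=2k+1$, and maximize over $r\ge4$: $2^{-r}\cdot 2^{-(r-3)}(1-2^{-(r-3)})(2^r-4)/2^r$ at $r=4$ gives $\frac1{16}\cdot\frac12\cdot\frac12\cdot\frac{12}{16}$, well below $3/16$. Check that the maximum over $r$ stays below $3/16$.

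\emph{Step 3: the second claim.} Here $\psi(k,t,t+1,i)={t+1\brack 1}{r-i+1\brack1}^{r-i}$. Compare it to $\psi(k,t,k-i-1,i)$: since $t+1\le k-i-1$ for $i\le r-2$, the ratio ${t+1\brack t}/{k-i-1\brack t}$ is at most $q^{-t(r-i-2)}$-ish, and at most $1$. Redo Step 2 with this extra decay, or more simply bound directly via \eqref{2609161} with ${k-i-1\brack t}$ replaced by ${t+1\brack1}$. The dominant term is again $i=r-3$, and summing gives the constant $9/112$ at $q=2$, $n=2k+1$, $r=4$.

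The main obstacle is Step 1: getting exponents and constant factors sharp enough that the sum really produces the stated closed form, in particular the exact factor $(q^r-4)/q^r$. If that factor cannot be matched, I would instead prove only the weaker numeric bounds $3/16$ and $9/112$, which is all that later applications presumably use.
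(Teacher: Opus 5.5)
Your overall strategy is the right one: bound each summand via \eqref{2609161}, sum a geometric series, then compare $\psi(k,t,t+1,i)$ with $\psi(k,t,k-i-1,i)$. The key step, however, is wrong.

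In Step 1 the second product is mis-estimated. Each factor satisfies $\frac{q^{k-i-j}-1}{q^{n-t-j}-1}\le q^{-(n-k-t+i)}$, not $q^{-(n-k-i+r)}$. For $i=1$, $n=2k+1$ your exponent is $k+r$, while the factor is really of order $q^{-(r+2)}$, so your bound is false. Carrying the estimates through correctly bounds the $i$-th ratio, for each $1\le i\le r-3$, by $\frac{1}{(q-1)^{r-1}}\cdot\frac{q^r-4}{q^r}\cdot q^{-(r-i-1)(i+n-2k-1)}$. At the critical value $n=2k+1$ the exponent $(r-i-1)i$ is minimized over $1\le i\le r-3$ at $i=1$ (value $r-2$, versus $2(r-3)$ at $i=r-3$). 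So the largest summand is the $i=1$ one, not the $i=r-3$ one. For example, for $q=2$, $t=1$, $k=7$, $n=15$ the summands $\psi(7,1,6-i,i){8+i\brack i}$, $i=1,2,3$, are about $1.57\cdot10^{13}$, $2.41\cdot10^{12}$, $1.20\cdot10^{12}$. Hence your Step 2 ansatz ``$i$-th term $=$ $(r-3)$-th term times $q^{-c(r-3-i)}$ with $c\ge1$'' fails, and $q^{-2(n-2k-1)}$ is not the trace of a dominant $i=r-3$ term.

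The missing idea is to decouple the exponent,
\[
(r-i-1)(i+n-2k-1)=(r-i-1)(n-2k-1)+i(r-i-1)\ \ge\ 2(n-2k-1)+(r+i-3),
\]
using $r-i-1\ge2$ and $i(r-i-1)-(r+i-3)=(i-1)(r-i-3)\ge0$. Then $q^{-2(n-2k-1)}$ is a uniform factor, and the remaining series $\sum_{i=1}^{r-3}q^{-(r-3+i)}=\frac{1}{(q-1)q^{r-3}}\bigl(1-\frac{1}{q^{r-3}}\bigr)$ is anchored at $i=1$. Together with the prefactor this is exactly the claimed closed form.

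Also, $\frac{q^r-4}{q^r}$ is not a Weierstrass-type estimate. It comes from
\[
\Bigl(1-\frac{1}{q^{r-1}}\Bigr)\Bigl(1-\frac{1}{q^{r-i+1}}\Bigr)^{r-i}\le\Bigl(1-\frac{2}{q^{r}}\Bigr)\Bigl(1-\frac{1}{q^{r}}\Bigr)^{3}\le\Bigl(1-\frac{4}{q^{r}}\Bigr)\Bigl(1-\frac{1}{q^{r+1}}\Bigr).
\]
Here the first factor is the saving in ${r-1\brack i}\le\frac{q^{i(r-i)}}{(q-1)^i}\bigl(1-\frac{1}{q^{r-1}}\bigr)$, and the second comes from $(q^{r-i+1}-1)^{r-i}$. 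The leftover $1-q^{-(r+1)}$ rebuilds $q^{r+1}-1$, which is cancelled against $q^x-1$ via $x\ge r+1$.

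Your numerical check is also off. At $q=2$ one has $(q-1)^{-r}=1$, not $2^{-r}$, and at $q=2$, $r=4$, $n=2k+1$ the closed form equals exactly $\frac12\cdot\frac12\cdot\frac{12}{16}=\frac3{16}$. So there is no slack in $3/16$. You must show the closed form is nonincreasing in $r\ge4$ (it is trivially nonincreasing in $n$), and that its value $\frac{(q^2-2)(q^2+2)}{(q-1)^3q^6}$ at $r=4$, $n=2k+1$ is at most $\frac3{16}$.

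For the same reason, your fallback of proving only the numerical constants by cruder estimates is delicate. It would not suffice anyway, since the closed form itself is used later (e.g.\ to get $7/64$ for $t=1$, $q=2$, $k\ge7$ in Lemma~\ref{2609216}).

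Finally, for the second claim, knowing that ${t+1\brack1}/{k-i-1\brack t}$ is at most $1$ only reproduces $3/16$. What is needed is the uniform bound ${t+1\brack1}/{k-i-1\brack t}\le{t+1\brack1}/{t+2\brack t}=\frac{q^2-1}{q^{t+2}-1}\le\frac37$, valid because $k-i-1\ge t+2$ for $i\le r-3$. Then $\frac37\cdot\frac3{16}=\frac9{112}$ follows directly from the first part, with no need to locate a dominant term.
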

\begin{proof}
	Let $1\leq i\leq r-3$.	 
	By routine computations, we have  $$\prod_{j=1}^{i}\frac{q^{r-j}-1}{q^{i-j+1}-1}\leq \prod_{j=1}^{i}\frac{q^{r-j}-1}{(q-1)q^{i-j}}=\frac{q^{i(r-i)}}{(q-1)^{i}}\prod_{j=1}^{i}\frac{q^{r-j}-1}{q^{r-j}}\leq \frac{q^{i(r-i)}}{(q-1)^{i}}\cdot\frac{q^{r-1}-1}{q^{r-1}}.$$	Notice that $\left(1-\frac{2}{q^{r}}\right)\left(1-\frac{1}{q^{r}}\right)^{3}=\left(1-\frac{4}{q^{r}}\right)\left(1-\frac{1}{2q^{r}}\right)-\frac{q^{r}-14}{2q^{2r}}-\frac{7q^{r}-2}{q^{4r}}$. From $q^{r}\geq 16$, we get
	$$\left(1-\frac{1}{q^{r-1}}\right)\left(1-\frac{1}{q^{r-i+1}}\right)^{r-i}\leq \left(1-\frac{2}{q^{r}}\right)\left(1-\frac{1}{q^{r}}\right)^{3}\leq \left(1-\frac{4}{q^{r}}\right)\left(1-\frac{1}{q^{r+1}}\right).$$	It follows from (\ref{2609161}) and Lemma \ref{2604282} that 
	\begin{equation}\label{2609163}
		\begin{aligned}
			\frac{\psi(k,t,k-i-1, i){n-k+i\brack i}}{{x\brack 1}{n-t-1\brack r-1}}&\leq \frac{q^{r+1}-1}{q^{x}-1}\cdot\frac{q^{(r-i-1)(r+1)}}{(q-1)^{r-1}}\cdot \frac{q^{r}-4}{q^{r}}\prod_{j=1}^{r-i-1}\frac{q^{k-i-j}-1}{q^{n-t-j}-1}\\
			&\leq \frac{1}{(q-1)^{r-1}}\cdot\frac{q^{r}-4}{q^{r}}\cdot \frac{1}{q^{(r-i-1)(i+n-2k-1)}}.
		\end{aligned}
	\end{equation}
	For each  $1\leq i\leq r-3$, we have $i(r-i-1)-(r+i-3)=(i-1)(r-i-3)\geq 0$. Hence
	$$ \sum_{i=1}^{r-3}\frac{1}{q^{(r-i-1)(i+n-2k-1)}}\leq\frac{1}{q^{2(n-2k-1)}}\cdot \sum_{i=1}^{r-3}\frac{1}{q^{r+i-3}}=\frac{1}{(q-1)q^{2(n-2k-1)}}\cdot\frac{1}{q^{r-3}} \left( 1-\frac{1}{q^{r-3}}\right).$$
	This implies the first inequality. Write $A_{r}=\frac{1}{(q-1)^{r}q^{r-3}} \left( 1-\frac{1}{q^{r-3}}\right)\cdot\frac{q^{r}-4}{q^{r}}$. Since $A_{r+1}/ A_{r}\leq 1 $ for any $r\geq 4$, we know $A_{r}\leq A_{4}=\frac{1}{(q-1)^{3}}\cdot \frac{q^{2}-2}{q^{4}}\cdot\frac{q^{2}+2}{q^{2}}\leq \frac{3}{16}$.  
	
	For each $1\leq i\leq r-3$, we have  $$\frac{\psi(k,t,t+1, i)}{\psi(k,t,k-i-1, i)}= \frac{{t+1\brack 1}}{{k-i-1\brack t}}\leq \frac{{t+1\brack 1}}{{t+2\brack t}}=\frac{q^{2}-1}{q^{t+2}-1}\leq  \frac{q+1}{q^{2}+q+1}=1-\frac{1}{1+\frac{1}{q}+\frac{1}{q^{2}}}\leq \frac{3}{7}.$$
	This implies the last inequality.
\end{proof}

By the definitions of $a(n,k,t)$ and $h(n,k,t)$, we have 
\begin{equation*}
	\begin{aligned}
		a(n,k,t)&={t+2\brack 1}{n-t-1\brack r-1}-q{t+1\brack 1}{n-t-2\brack r-2},\\
		h(n,k,t)&={n-t\brack r}-q^{r(r+1)}{n-k-1\brack r}+q^{r+1}{t\brack 1}.
	\end{aligned}
\end{equation*}
Recall that $m(n,k,t)=\max\{a(n,k,t), h(n,k,t)\}$.

\begin{Lemma}\label{2609131}
	Let  $k\ge t+2$ and $n\ge 2k+1$. Then the following hold.
	\begin{enumerate}
		\item[\normalfont(i)] If $k\ge 2t+2$, then
		$
		h(n,k,t)>a(n,k,t).
		$
		\item[\normalfont(ii)] If $t+2\le k\le 2t+1$, then
		$a(n,k,t)\ge h(n,k,t)$, 
		with equality if and only if $(k,t)=(3,1)$.
	\end{enumerate}
\end{Lemma}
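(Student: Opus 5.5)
The plan is to rewrite both quantities in the form $c\cdot{n-t-1\brack r-1}$ plus lower-order terms, where $r=k-t$, and then compare the coefficients. For $a(n,k,t)$ the displayed formula already has this form. Its main term is ${t+2\brack1}{n-t-1\brack r-1}$. The correction $q{t+1\brack1}{n-t-2\brack r-2}$ is smaller by a factor of roughly $q^{-(n-k)}\cdot q^{t+2}$, using ${n-t-2\brack r-2}/{n-t-1\brack r-1}=(q^{r-1}-1)/(q^{n-t-1}-1)$.

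For $h(n,k,t)$ I will work in the quotient $V/X$, which has dimension $n-t$. There ${n-t\brack r}-q^{r(r+1)}{n-k-1\brack r}$ counts the $r$-subspaces meeting the fixed $(r+1)$-space $M/X$ nontrivially. I will expand this count by the $q$-Vandermonde identity (Lemma \ref{2609151}) with $(a,b,c)=(r+1,n-k-1,r)$, and then drop the $i=0$ term. This gives
$$h(n,k,t)=\sum_{i=1}^{r}q^{(r-i)(r+1-i)}{r+1\brack i}{n-k-1\brack r-i}+q^{r+1}{t\brack1}.$$
Equivalently, counting via a $1$-subspace $L\subseteq M/X$ gives $h\le{r+1\brack1}{n-t-1\brack r-1}+q^{r+1}{t\brack1}$, together with an exact inclusion–exclusion correction from the $i=2$ term. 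So the leading coefficient of $h$ is ${r+1\brack1}={k-t+1\brack1}$, whereas that of $a$ is ${t+2\brack1}$.

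For part (i), $k\ge2t+2$ gives $r+1\ge t+3$, so ${r+1\brack1}-{t+2\brack1}\ge q^{t+2}$. I will show that this gap, multiplied by ${n-t-1\brack r-1}$, beats the $i\ge2$ over-count in $h$. That over-count has size at most about $q^{2}{r+1\brack2}{n-t-2\brack r-2}$, and Lemma \ref{2604282} shows it is a factor of roughly $q^{2r-(n-t)}\le q^{-(t+1)}$ smaller than the main term when $n\ge2k+1$. The additive terms $q^{r+1}{t\brack1}$ and $q{t+1\brack1}{n-t-2\brack r-2}$ are negligible, and I will absorb them the same way. I expect the worst case to be $q=2$, $n=2k+1$, $k=2t+2$, and I will check that instance directly if the crude bounds are too lossy there.

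For part (ii), $k\le2t+1$ gives $r+1\le t+2$. When $r+1<t+2$ the argument of part (i) runs with the roles reversed, so $a>h$. The main obstacle is the boundary case $k=2t+1$, where the leading coefficients coincide and second-order terms decide the comparison. In that case I will compute $a-h$ exactly as a combination of ${n-t-2\brack r-2}$, ${n-k-1\brack r-2}$ and $q^{r+1}{t\brack1}$. The plan is to write $a(n,k,t)$ as
$${t+2\brack t+1}q^{(t+2-(t+1))(k-t-1)}\cdot\#\{F:\ F\cap Z=H\}+{n-t-2\brack r-2},$$
expand ${n-t-1\brack r-1}={n-t-2\brack r-2}+q^{r-1}{n-t-2\brack r-1}$, and match this term by term against the Vandermonde sum for $h$. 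I expect the $i=2$ term of $h$ to be strictly smaller than the excess of $a$ over its leading term, so that $a-h$ is a positive expression that vanishes exactly when $r=2$ and $t=1$, i.e.\ $(k,t)=(3,1)$. As a sanity check, for $(k,t)=(3,1)$ both families consist of all $3$-spaces meeting a fixed $3$-space $Z=M$ in dimension at least $2$, plus those containing $X$ with $\dim(F\cap M)\ge2$, and these coincide. The equality case then follows by substituting $r=2$, $t=1$ into the exact formula and checking that every residual term cancels.
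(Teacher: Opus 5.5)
Your route differs from the paper's, which does not prove this lemma itself but quotes Lemmas 2.5--2.9 of Cao--Lv--Wang--Zhou. Part (i) and the case $k\le 2t$ of part (ii) go through as you sketch. Let $N_i=q^{(r+1-i)(r-i)}{r+1\brack i}{n-k-1\brack r-i}$ be the number of $F\supseteq X$ with $\dim(F\cap M)=t+i$. The over-count ${r+1\brack1}{n-t-1\brack r-1}-\sum_iN_i=\sum_{i\ge2}q{i-1\brack1}N_i$ is at most $q{r+1\brack2}{n-t-2\brack r-2}$. The gap $\bigl|{r+1\brack1}-{t+2\brack1}\bigr|{n-t-1\brack r-1}$ swamps every lower-order term of the wrong sign. (In the form of Lemma \ref{2609151}, your expansion of $h$ needs $(a,b,c)=(n-k-1,r+1,r)$ followed by reindexing.)

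The gap is the boundary case $k=2t+1$, which carries the entire equality statement, and there your plan does not work as written. Although ${r+1\brack1}={t+2\brack1}$, the exact layers do not match. The number ${t+2\brack1}q^{r-1}{n-t-2\brack r-1}$ of $F$ with $\dim(F\cap Z)=t+1$ exceeds $N_1$ by an amount of the same order as the second-order terms. So comparing $a$ term by term with the Vandermonde sum for $h$ breaks down. Already for $(k,t)=(3,1)$, your expectation that the $i=2$ term of $h$ is smaller than ${n-t-2\brack r-2}$ reads ${3\brack1}<1$. In fact the first layers differ by $q{3\brack1}$, and $a=h$ amounts to $q{3\brack1}+1={3\brack1}+q^3$. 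Moreover, for $t\ge2$ the difference $a-h$ involves every $N_i$ with $2\le i\le t+1$, not just ${n-k-1\brack r-2}$.

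What does coincide are weighted counts. Double counting pairs consisting of a point (respectively a line) of $M/X$ and an $r$-subspace of $V/X$ containing it gives $\sum_i{i\brack1}N_i={r+1\brack1}{n-t-1\brack r-1}$ and $\sum_i{i\brack2}N_i={r+1\brack2}{n-t-2\brack r-2}$. Substitute both into $a$ and into $h=\sum_iN_i+q^{r+1}{t\brack1}$ with $r=t+1$. This yields
\begin{equation*}
a(n,k,t)-h(n,k,t)=\sum_{i=2}^{t+1}\frac{q^{i+1}{i-1\brack1}{t+2-i\brack1}}{{t+2\brack1}}\,N_i-q^{t+2}{t\brack1}.
\end{equation*}
All coefficients are positive. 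The $i=2$ term equals $q^{t(t-1)+3}{t\brack1}{t+1\brack1}{n-2t-2\brack t-1}/(q+1)$. When $t=1$ it is the only term and equals exactly $q^{3}=q^{t+2}{t\brack1}$; when $t\ge2$ it strictly exceeds $q^{t+2}{t\brack1}$. This settles the boundary case, including the equality characterization.

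Finally, your sanity check is false. $M$ has dimension $k+1=4$, so ``$Z=M$'' is impossible, and $\mathcal A(Z)\neq\mathcal H(X,M)$ for $(k,t)=(3,1)$. Only their sizes agree, which is precisely why Theorem \ref{main}(ii) lists both families.
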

\begin{proof}
	See \cite[Lemmas~2.5, 2.6(i), 2.7(i), 2.8(i), and~2.9]{Cao-Lv-Wang-Zhou-2022}.
\end{proof}

\begin{Lemma}\label{2608281}
	Let $k\geq t+2$ and $n\geq 2k+1$. Then
	$$m(n,k,t)> \left(1-\frac{1}{3\cdot2^{t}}\right){x\brack 1}{n-t-1\brack k-t-1}. $$
\end{Lemma}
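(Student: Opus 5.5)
Since $m(n,k,t)=\max\{a(n,k,t),h(n,k,t)\}$, I will bound just one of the two terms from below, according to which part of $x=\max\{t+2,r+1\}$ is active. The case $t+2\ge r+1$ (that is, $k\le 2t+1$) uses $a(n,k,t)$; the case $r+1>t+2$ uses $h(n,k,t)$.

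\emph{Case $x=t+2$.} From the displayed formula,
$$a(n,k,t)={t+2\brack 1}{n-t-1\brack r-1}-q{t+1\brack 1}{n-t-2\brack r-2},$$
so it suffices to show
$$\frac{q{t+1\brack 1}{n-t-2\brack r-2}}{{t+2\brack 1}{n-t-1\brack r-1}}<\frac{1}{3\cdot 2^t}.$$
The ratio equals
$$\frac{q(q^{t+1}-1)}{q^{t+2}-1}\cdot\frac{q^{r-1}-1}{q^{n-t-1}-1}.$$
The first factor is less than $1$. The second is at most about $q^{r-1-(n-t-1)}=q^{-(n-k)}\le q^{-(k+1)}\le 2^{-(t+3)}$, using $n\ge 2k+1$ and $k\ge t+2$. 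Keeping the exact factors, the product is below $2^{-(t+3)}\cdot(1+\text{small})$, which is less than $1/(3\cdot 2^t)$ because $1/8<1/3$. This case is routine.

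\emph{Case $x=r+1>t+2$.} Here $k\ge 2t+2$, so $r\ge t+2\ge 3$, and I use $h$. Write
$${n-t\brack r}-q^{r(r+1)}{n-k-1\brack r}$$
as the number of $r$-subspaces of an $(n-t)$-space $V'$ meeting a fixed $(r+1)$-subspace $M'$ nontrivially. Bonferroni (inclusion--exclusion truncated after two terms) over the $1$-subspaces $L$ of $M'$ gives
$${r+1\brack 1}{n-t-1\brack r-1}-{r+1\brack 2}{n-t-2\brack r-2}$$
as a lower bound, because each subspace meeting $M'$ in dimension $d\ge1$ is counted ${d\brack1}-{d\brack2}\le 1$ times. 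Alternatively, the $q$-Vandermonde identity (Lemma~\ref{2609151}) applied to ${n-t\brack r}$ decomposed along $M'$ gives the same terms exactly. Then
$$h(n,k,t)\ge {r+1\brack 1}{n-t-1\brack r-1}\left(1-\frac{{r+1\brack 2}{n-t-2\brack r-2}}{{r+1\brack 1}{n-t-1\brack r-1}}\right).$$
The ratio in parentheses is
$$\frac{(q^{r}-1)(q^{r-1}-1)}{(q^2-1)(q^{n-t-1}-1)}\le \frac{q^{2r-1}}{(q^2-1)\,q^{n-t-1}}\cdot(1+\varepsilon),$$
of order $q^{2r-n+t}/(q^2-1)=q^{2k-n-t}/(q^2-1)\le q^{-1-t}/(q^2-1)$.

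For $q=2$ this gives $1/(3\cdot 2^{t+1})<1/(3\cdot2^t)$. The error $\varepsilon$ from the "$-1$" terms must be handled exactly, and the halving gives the slack for that. The $+q^{r+1}{t\brack1}$ term in $h$ only helps.

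The main obstacle is making the second case sharp enough at $q=2$, $n=2k+1$. The crude bound gives $\frac{(2^r-1)(2^{r-1}-1)}{3(2^{r+t}-1)}$, and this must be below $1/(3\cdot 2^t)$. Since $(2^r-1)(2^{r-1}-1)2^t<2^{2r-1+t}$ while $2^{r+t}-1\ge 2^{r+t-1}$, this is not immediate. I would compare directly: $(2^r-1)(2^{r-1}-1)2^t<2^{r+t}-1$ fails, so the Bonferroni bound alone is too weak for large $r$.

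At that point I would switch to the exact decomposition: count the $r$-subspaces meeting $M'$ via $\sum_{L}{n-t-1\brack r-1}$ minus the overcount. The overcount is at most $\sum_{d\ge2}({d\brack1}-1)\cdot\#\{\dim(F\cap M')=d\}$, which is bounded by $q{r+1\brack 2}{n-t-2\brack r-2}/{2\brack1}$-type expressions. I would re-examine the exponent bookkeeping carefully, possibly needing $n-t-1\ge 2r+t+1$, to obtain a ratio of order $2^{-(t+1)}$ and close the case.
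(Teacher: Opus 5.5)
Your case $x=t+2$ is fine and is the paper's argument for $a(n,k,t)$. The gap is in the case $x=r+1$, at the Bonferroni step. Inclusion--exclusion over the $1$-subspaces of $M'$ does not truncate as it does for sets. A subspace $F$ with $\dim(F\cap M')=2$ is counted ${2\brack 1}-{2\brack 2}=q$ times, not at most once. So the inequality ${d\brack1}-{d\brack2}\le 1$ you invoke is false, and so is the lower bound you derive from it. For $q=2$ and $(n,k,t)=(9,4,1)$ your bound reads ${4\brack1}{7\brack2}-{4\brack2}{6\brack1}=40005-2205=37800$, while $h(9,4,1)=35731$. The $q$-Vandermonde alternative you mention gives the exact number of $r$-subspaces meeting $M'$ in each dimension $d$, not these two terms, so it does not rescue the claim. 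The correct truncation uses the M\"obius function of the subspace lattice: ${d\brack1}-q{d\brack2}\le1$ for every $d\ge1$, with equality for $d=1,2$ and a nonpositive value for $d\ge3$. This gives
\[h(n,k,t)>{r+1\brack1}{n-t-1\brack r-1}-q{r+1\brack2}{n-t-2\brack r-2},\]
which is the inequality the paper imports from \cite{WXY-2024-220}. Once corrected, your counting argument is a self-contained proof of it.

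With the extra factor $q$, the halving slack you relied on at $q=2$ disappears. The relevant ratio becomes $\frac{q(q^r-1)(q^{r-1}-1)}{(q^2-1)(q^{n-t-1}-1)}$. Since $(q^r-1)(q^{r-1}-1)q^{n-2k+t}=q^{n-t-1}-q^{n-k}-q^{n-k-1}+q^{n-2k+t}$ and $q^{n-k}+q^{n-k-1}>q^{n-2k+t}+1$, this ratio is strictly less than $\frac{1}{(q^2-1)q^{n-2k+t-1}}\le\frac{1}{3\cdot 2^t}$. At $q=2$, $n=2k+1$ the middle bound equals $\frac{1}{3\cdot2^t}$ exactly. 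So the exact $-1$ bookkeeping, or the positive term $q^{r+1}{t\brack1}$, is what gives strictness; there is no factor-two room. The obstacle in your last paragraph comes from an arithmetic slip: at $n=2k+1$ one has $n-t-1=2r+t$, not $r+t$, so the crude estimate does not actually fail. As written, though, your proposal ends the second case with an unexecuted plan built on an invalid inequality, so it does not yet prove the lemma.
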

\begin{proof}
	From Lemma \ref{2604282}, we obtain 
	\begin{equation*} \frac{a(n,k,t)}{{t+2\brack 1}{n-t-1\brack r-1}}=1-\frac{q(q^{t+1}-1)(q^{r-1}- 1)}{(q^{t+2}-1)(q^{n-t-1}-1)}>1-\frac{1}{q^{n-k}}\geq 1-\frac{1}{2^{t+3}}\geq 1-\frac{1}{3\cdot2^{t}}. \end{equation*}
	By \cite[Lemma 2.4]{WXY-2024-220}, we get 
	$h(n,k,t)> {r+1\brack 1}{n-t-1\brack r-1}-q{r+1\brack 2}{n-t-2\brack r-2}$. It  follows  that $$\frac{h(n,k,t)}{{r+1\brack 1}{n-t-1\brack r-1}}>1-\frac{1}{(q^{2}-1)q^{n-2k+t-1}}\geq 1-\frac{1}{3\cdot 2^{t}}$$ 
	from Lemma \ref{2604282}.	This finishes  the proof.
\end{proof}

Let $k\geq t+3$, $n\geq 2k+1$, $t+2\leq a\leq k-1$.  
For a real number $0\leq C\leq 315q^{t}/ 512$, write
\begin{equation*}
	F(n,k,t,a;C)
	=
	\frac{315}{512}\cdot
	\frac{1}{q^{(r-1)(n-2k-1)+t(k-a-1)}}
	+
	\frac{C}{q^{(a-t-1)(n-2k+t-1)}}.
\end{equation*}
Then $F(n,k,t,a;C)\leq F(2k+1,k,t,a;C)$. Moreover, the difference of $F(2k+1,k,t,k-1;C)$ and $F(2k+1,k,t,a;C)$ is equal to $(1-q^{-t(k-a-1)})(315/512-Cq^{-t(a-t-1)})\geq 0$.

From Lemma \ref{2604282}, we obtain
\begin{equation}\label{2609171}
	\frac{{r+1\brack1}^{a-t-1}{n-a\brack k-a}}
	{{n-t-1\brack r-1}}=\prod_{i=1}^{a-t-1}
	\frac{(q^{r+1}-1)(q^{r-i}-1)}{(q-1)(q^{n-t-i}-1)}\leq \frac{1}{q^{(a-t-1)(n-2k+t-1)}}.
\end{equation} 
By Lemma \ref{2609153}, we have 
\begin{equation}\label{2609063}
	\frac{\varphi(k,t,a,k,0)}
	{{x\brack1}{n-t-1\brack r-1}}
	+
	\frac{C{x\brack1}{r+1\brack1}^{a-t-1}{n-a\brack k-a}}
	{{x\brack1}{n-t-1\brack r-1}}\leq F(n,k,t,a;C)\leq F(2k+1,k,t,k-1;C).
\end{equation}

\begin{Lemma}\label{2609216}
	Let $k\geq t+3$, $n\geq 2k+1$, $t+2\leq a\leq k-1$ and $(n,k,t,q)\neq (9,4,1,2)$. Set  $r=k-t$,  $U_{1}={r+1\brack1}^{a-t}{n-a\brack k-a}$, $U_{2}={t+2\brack1}{r+1\brack1}^{a-t-1}{n-a\brack k-a}$, $U_{3}={t+4\brack2}{r+1\brack1}^{a-t-2}{n-a\brack k-a}$, 
	\begin{equation*}
		U_{4}={3\brack1}{r+1\brack1}^{a-t-1}{n-a\brack k-a} +q^3{t\brack1}{2\brack1}^2{n-t-2\brack r-2}.
	\end{equation*}
	Then $m(n,k,t)-\Phi_{2}(n,k,t,a,r-3)$ is  greater than $\max\{U_{1}, U_{2},  U_{4}\}$, and $U_{3}$ if $k\geq t+4$.
\end{Lemma}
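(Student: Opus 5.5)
We normalize everything by $D:={x\brack1}{n-t-1\brack r-1}$. By Lemma \ref{2608281}, $m(n,k,t)/D>1-\frac{1}{3\cdot2^{t}}\ge \frac56$. So it suffices to show
$$\frac{\Phi_{2}(n,k,t,a,r-3)+U_\ell}{D}\le 1-\frac{1}{3\cdot 2^{t}}$$
for each $\ell\in\{1,2,4\}$, and for $\ell=3$ when $k\ge t+4$.

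Write $\Phi_2=\varphi(k,t,a,k,0)+\sum_{i=1}^{r-3}\psi(k,t,k-i-1,i){n-k+i\brack i}$. By Lemma \ref{2608317}, the sum contributes at most $\frac{3}{16}D$. More precisely it contributes at most $\frac{1}{(q-1)^rq^{2(n-2k-1)}q^{r-3}}D$, and this is empty (zero) when $r=3$. For the term $\varphi(k,t,a,k,0)$ together with $U_\ell$, each $U_\ell$ except the second part of $U_4$ has the form $C{x\brack1}{r+1\brack1}^{a-t-1}{n-a\brack k-a}$:
\begin{itemize}
\item[] for $U_1$, $C={r+1\brack1}/{x\brack1}\le q^{r+1-x}\cdot\frac{q}{q-1}$, hence bounded by a small constant times $1$ since $x\ge r+1$;
\item[] for $U_2$, $C={t+2\brack1}/{x\brack1}\le1$;
\item[] for $U_3$, $C={t+4\brack2}/({x\brack1}{r+1\brack1})$, which is roughly $q^{2t+4-x-r-1}$, and this is of order $q^{t}$ at worst;
\item[] for $U_4$, $C={3\brack1}/{x\brack1}$.
\end{itemize}
In each case we first check $C\le 315q^t/512$. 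Then \eqref{2609063} gives $(\varphi+U_\ell)/D\le F(2k+1,k,t,k-1;C)=\frac{315}{512}q^{-(r-1)\cdot0}\cdot q^{0}+Cq^{-(r-2)t}$. The first term must be sharpened to the actual factor from Lemma \ref{2609153}, namely $(q^{r+1}-1)/(q^x-1)$ times $315/512$ at $q=2$, which decreases in $q$. The remaining piece of $U_4$, $q^3{t\brack1}{2\brack1}^2{n-t-2\brack r-2}$, is divided by $D$ directly via Lemma \ref{2604282}. It is of order $q^{3+t+2-x-(n-k-1)}\cdot\frac{q}{q-1}$-factors, hence tiny for $n\ge 2k+1$.

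Summing, the needed inequality reduces to checking a bound of the shape
$$\tfrac{315}{512}\cdot\tfrac{q^{r+1}-1}{q^{x}-1}+Cq^{-(r-2)t}+\text{(Lemma \ref{2608317} term)}+\text{(small term)}<1-\tfrac{1}{3\cdot2^t}.$$
At $q=2$, $r=3$, $t$ small, the crude constants $315/512+3/16$ already sit near $0.8$. So I would split into cases $q\ge3$ or $n\ge2k+2$ (where all terms gain a factor $q^{-1}$ and the inequality is comfortable), versus $q=2$, $n=2k+1$. In the latter case, I would distinguish $r=3$ (Lemma \ref{2608317} sum empty) from $r\ge4$ (use the exact bound with $q^{r-3}$ decay, giving at most $\frac{1}{16}\cdot\frac34\cdot\frac{12}{16}$-type values). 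I would also distinguish $t=1$ from $t\ge2$, since $1-1/(3\cdot 2^t)$ is tightest at $t=1$.

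The main obstacle is the boundary regime $q=2$, $n=2k+1$, small $t$ and $r$ (e.g. $k=t+3$, $t=1,2$). There the generic bound $315/512$ for the $\varphi$-part is too lossy, especially for $U_3$ and $U_1$ where $C$ is largest. The exclusion $(9,4,1,2)$ signals that one such case genuinely fails. For these finitely many small parameter tuples I would fall back on the exact formula for $\varphi(k,t,a,k,0)$ and compute $m(n,k,t)$ exactly. I would also use that $a\le k-1=t+r-1$ forces $a\in\{t+2\}$ when $r=3$, which makes the check a finite numerical verification for each $t$, handled for general $t$ by comparing leading powers of $2$.
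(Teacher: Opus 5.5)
Your plan is essentially the paper's proof: the same normalization by ${x\brack1}{n-t-1\brack r-1}$ against Lemma~\ref{2608281}, Lemma~\ref{2608317} for the middle sum, \eqref{2609063} with the constants $C_\ell$, and direct calculation in a few tight cases with $q=2$, $n=2k+1$. The paper streamlines this by first comparing the $U_\ell$ (reducing to $U_1$ when $t=1$ and to $U_4$ when $k=t+3$, $t\ge2$), and in the latter case it evaluates $U_4/({t+2\brack1}{n-t-1\brack2})\le 29/100$ exactly instead of splitting $U_4$.

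One correction: the second piece of $U_4$ is not tiny when $r=3$. The estimate from Lemma~\ref{2604282} is $(q+1)^2/q^{r+x-2}$ times the normalizer, i.e.\ $9/32$ at $q=2$, $x=4$. So your split bounds exceed $11/12$ at $(n,k,t,q)=(11,5,2,2)$, and this tuple, together with $(11,5,1,2)$ and $(13,6,1,2)$, is exactly where your generic estimates leave the inequality open and an exact check is required.
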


\begin{proof}
	In this proof,  let $\Psi(n,k,t,a;C)$ denote the LHS of (\ref{2609063}).

	Suppose $t\geq 2$ and $k\geq t+4$. Set $C_{1}={r+1\brack 1}/ {x\brack 1}$, $C_{2}={t+2\brack 1}/ {x\brack 1}$, $C_{3}={t+4\brack 2}/ ({x\brack 1}{r+1\brack 1})$ and $C_{4}= {3\brack 1}/ {x\brack 1}$.  If $k\geq t+4$, then $C_{3}/ q^{t}$ is equal to
	$$\frac{(q^{t+4}-1)(q^{t+3}-1)}{q^{t}(q+1)(q^{x}-1)(q^{r+1}-1)}\leq\frac{1}{q^{r+x-t-6}(q+1)(1-\frac{1}{q^{x}})(1-\frac{1}{q^{r+1}})}\leq \frac{1}{q^{r+x-t-5}}\leq \frac{1}{2}. $$
	By (\ref{2609063}), we have 
	$\Psi(n,k,t,a;C_{i})\leq 347/512$ for any $1\leq i\leq 3$, and  $\Psi(n,k,t,a;C_{4})\leq 323/512$. By  Lemma \ref{2604282}, we have $q^3{t\brack1}{2\brack1}^2{n-t-2\brack r-2}/ \left({x\brack 1}{n-t-1\brack k-t-1}\right)\leq (q+1)^{2}/q^{r+x-2}\leq 9/128$.  
	This together with Lemma \ref{2608317} yields $\Phi_{2}(n,k,t,a,r-3)+U_{i}\leq 11{x\brack 1}{n-t-1\brack r-1}/12$. The desired result follows from Lemma \ref{2608281}.

	Suppose $t\geq 2$ and $k=t+3$. Then $a=t+2$. Moreover, we have $U_{1}\leq U_{2}$ and 
	$(U_{4}-U_{2})/{n-t-2\brack 1}=q^{3}(q+1)(q^{t-1}+q)\geq 0$. It is sufficient to show $\varphi(t+3, t,t+2,t+3,0)+U_{4}<m(n,t+3,t)$. Indeed, we  know $U_4/\left({t+2\brack1}{n-t-1\brack2}\right)$ is equal to $$\frac{q(q+1)^2(q^2-1)}{q^{n-t-1}-1}+\frac{(q^2-1)^2(q^4+1)}{(q^{t+2}-1)(q^{n-t-1}-1)}\le\frac{q(q+1)^2}{(q^2+1)(q^4+1)}+\frac{1}{(q^2+1)^2}<\frac{2}{q^3}+\frac{1}{(q^2+1)^2}.$$
	It is at most $29/100$. This together with Lemmas \ref{2609153} and \ref{2608281} yields the required result.
	
	Suppose $t=1$.  Then $U_{2}\leq U_{1}$, and $U_{3}\leq U_{1}$ if $k\geq 5$. Moreover,  if $k=4$, then
	$U_{1}-U_{4}= q^{4}(q^{2}-1){n-3\brack 1}\geq 0$, and if $n=2k+1$, then by ${k\brack 1}\geq q^{k-2}(q+1)$, we have
	$$\frac{{2k-2\brack k-3}}{{k\brack 1}^{a-3}{2k-a+1\brack k-a}}=\prod_{i=1}^{a-3}\frac{q^{2k-i-1}-1}{{k\brack 1}(q^{k-i-2}-1)}\leq \prod_{i=1}^{a-3}\frac{q^{5}}{(q+1)(q^{2}-1)}\leq q^{2(a-3)},  $$
	which implies $$U_1-U_4\geq {k\brack 1}^{a-3}{2k-a+1\brack k-a}\left({k\brack 1}^{2}-{3\brack 1}{k\brack 1}-q^{2a-3}(q+1)^{2}\right)\geq 0.$$ 
	Note that $\left( \Phi_{2}(n,k,1,a,r-3)+U_{i}\right)/ \left({r+1\brack 1}{n-2\brack r-1}\right)$ is decreasing in $n$  for $1\leq i\leq 4$. By Lemma \ref{2608281}, it is sufficient to show 
	$\left( \Phi_{2}(n,k,1,a,r-3)+U_{1}\right)/ \left({r+1\brack 1}{n-2\brack r-1}\right)\leq  5/6$  at the smallest admissible value of $n$.  For $q\geq 3$,  Lemmas \ref{2608317} and \ref{2609153} together with (\ref{2609171}) give 
	$$\frac{\Phi_{2}(2k+1,k,1,a,r-3)+U_{1}}{{r+1\brack 1}{2k-1\brack r-1}}\leq\frac{4160}{19683} + \frac{3}{16}+\frac{1}{3}\leq \frac{5}{6}.$$  For $q=2$ and $k\geq 7$,   Lemma \ref{2608317} and (\ref{2609063})  give
	$$\frac{\Phi_{2}(2k+1,k,1,a,r-3)+U_{1}}{{r+1\brack 1}{2k-1\brack r-1}}\leq\frac{315}{512} + \frac{7}{64}+\frac{1}{16}\leq \frac{5}{6}.$$
	For $q=2$ and $k=4$,  inequality (\ref{2609063}) gives
	$$\frac{\Phi_{2}(10,4,1,3,0)+U_{1}}{{4\brack 1}{8\brack 2}}\leq F(10,4,1,3;1)=\frac{827}{2048}\leq \frac{5}{6}.$$
	The remaining cases are $n=2k+1$, $t=1$, and $q=2$, with $5\le k\le6$ and $3\le a\le k-1$. 
	By direct calculations, the quotients of $ \Phi_{2}(2k+1,k,1,a,r-3)+U_{1}$ by ${r+1\brack 1}{n-2\brack r-1}$ are at most $5/6$. This finishes the proof.
\end{proof}

\begin{Lemma} \label{2609062}
	Let 
	$n\geq 2t+7$. Then
	$$\varphi(t+3,t,t+2,t+2,0)
	+{t+4\brack2}{n-t-2\brack1}
	<m(n,t+3,t).
	$$
\end{Lemma}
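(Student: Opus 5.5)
The plan is to drop the maximum and compare the left-hand side with the single explicit quantity
$$a(n,t+3,t)={t+2\brack 1}{n-t-1\brack 2}-q{t+1\brack 1}{n-t-2\brack 1},$$
which is at most $m(n,t+3,t)$ by definition. For $t=1$ we have $k=4=2t+2$, and Lemma~\ref{2609131} says $h>a$ there. I would still try $a$ first and switch to $h(n,4,1)$ only if the margin for $t=1$ is too thin. Set $D={t+2\brack 1}{n-t-1\brack 2}$. The goal is to show that each term on the left is a small, explicitly bounded fraction of $D$, and that these fractions plus the correction $q{t+1\brack 1}{n-t-2\brack 1}/D$ sum to less than $1$.

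\textbf{Step 1: expand $\varphi$.} Here $k=t+3$ and $r=3$, so the factor ${k-t+1\brack 1}$ is ${4\brack 1}$. Since ${2\brack j}=0$ for $j\ge3$, only $j=0,1,2$ survive in \eqref{2609112}:
$$\varphi(t+3,t,t+2,t+2,0)={4\brack 1}^3+q^2{t\brack 1}{2\brack 1}^2{4\brack 1}^2+q^8{t\brack 2}{4\brack 1}.$$
This is a fixed polynomial in $q$ of size roughly $q^{2t+7}$, with no dependence on $n$. By contrast, $D\ge q^{t+1}q^{2(n-t-3)}\ge q^{3t+9}$ when $n\ge 2t+7$. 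So $\varphi/D$ is at most about $q^{-t-2}$ with explicit constants, which I would bound using Lemma~\ref{2604282}.

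\textbf{Step 2: the main term.} Use
$$\frac{{n-t-2\brack 1}}{{n-t-1\brack 2}}=\frac{q^2-1}{q^{n-t-1}-1}$$
to get
$$\frac{{t+4\brack 2}{n-t-2\brack 1}}{D}=\frac{(q^{t+4}-1)(q^{t+3}-1)}{(q+1)(q^{t+2}-1)(q^{n-t-1}-1)}.$$
This ratio is decreasing in $n$, so I evaluate it at $n=2t+7$, where the denominator factor is $q^{t+6}-1$. It is then about $q^{5}/\bigl((q+1)q^{6}\bigr)\approx 1/(q(q+1))$, which is at most about $1/6$ for $q=2$. The correction term in $a$ contributes $q(q^{t+1}-1)(q^2-1)/\bigl((q^{t+2}-1)(q^{n-t-1}-1)\bigr)$, of order $q^{-(n-t-2)}$, which is negligible. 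All three ratios are decreasing in $n$, so it suffices to treat $n=2t+7$ and show that the sum of the three ratios is below $1$.

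\textbf{Main obstacle.} The hard case is $q=2$ with small $t$, above all $t=1$ and $n=9$. There the crude bounds of Lemma~\ref{2604282} lose constant factors. In particular ${4\brack 1}^3=3375$ when $q=2$ is large compared with $D={3\brack 1}{7\brack 2}=7\cdot 2667$, so the bound may not close cleanly.
\begin{itemize}
\item For $t\ge2$, or for $q\ge3$, I expect the clean estimates to give a total fraction comfortably below $1$. A bound like $1/6+$ (a term of order $q^{-t-2}$ times moderate constants) $+$ (a tiny correction) $<1$ should hold uniformly.
\item For $q=2$ and small $t$, I would compute the left side exactly and compare it directly with $a$, or with $h$ when $t=1$, at $n=2t+7$. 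The left side grows only linearly in $q^{n}$ while $a$ grows like $q^{2n}$, so the gap widens as $n$ increases. That makes a finite check at the base value of $n$, together with monotonicity, sufficient.
\end{itemize}
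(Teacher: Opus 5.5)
Your route differs from the paper's, and it can be made to work, but Step~2 contains an arithmetic error that matters. You expand $\varphi(t+3,t,t+2,t+2,0)$ into its three nonzero terms, divide by $D={t+2\brack1}{n-t-1\brack2}$, compare with $a(n,t+3,t)$, reduce to $n=2t+7$ by monotonicity, and check small cases numerically. The paper's argument is uniform. It normalizes by ${x\brack1}{n-t-1\brack2}$ with $x=\max\{t+2,4\}$, so the choice $x=4$ brings in $h$ automatically at $t=1$. It then bounds $\varphi(t+3,t,t+2,t+2,0)\le\frac37\varphi(t+3,t,t+2,t+3,0)+\frac47{4\brack1}^3$ and takes the constant $315/512$ from Lemma~\ref{2609153}. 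Finally it bounds the main term by $q^{t+1}/(q^x-1)\le\frac23-\frac1{3\cdot2^t}$ and closes with Lemma~\ref{2608281}, with no numerical cases at all.

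\textbf{The error.} The factor $q^2-1$ coming from ${n-t-2\brack1}/{n-t-1\brack2}$ cancels the entire $(q^2-1)$ in the denominator of ${t+4\brack2}$, so
\[
\frac{{t+4\brack2}{n-t-2\brack1}}{D}=\frac{(q^{t+4}-1)(q^{t+3}-1)}{(q^{t+2}-1)(q^{n-t-1}-1)},
\]
with no factor $q+1$. This agrees with the paper's expression, which has $q^x-1$ in place of $q^{t+2}-1$. At $n=2t+7$ this ratio is about $1/q$, i.e.\ slightly above $1/2$ for $q=2$, not $1/6$.

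\textbf{Consequences.} The comparison with $a$ genuinely fails at $(t,q,n)=(1,2,9)$. There the left-hand side is $11475+9765=21240$, while $a(9,4,1)=18291$. So $h$ is mandatory in that case rather than a fallback, and it does work, since $h(9,4,1)=35731$.

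Elsewhere your plan survives with the corrected value:
\begin{itemize}
\item at $(t,q,n)=(1,2,10)$ the three ratios sum to about $0.42$;
\item at $(2,2,11)$ they are $0.511+0.195+0.011\approx0.72$;
\item for $q\ge3$ they stay below about $0.6$;
\item all three ratios decrease in $n$.
\end{itemize}
However, the slack at $q=2$ is only about $1/2$, not the roughly $5/6$ your estimate suggested. Expect to evaluate the smallest values of $t$ exactly rather than by constant-losing bounds.

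Finally, your remark that ``the gap widens'' is justified only for $a$, through the decreasing ratios over $D$. It is not justified for $h$. So handle $n=9$ at $t=1,q=2$ by the exact value of $h$, and $n\ge10$ by $a$; do not assert monotonicity of $h$ minus the left-hand side.
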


\begin{proof}
	Put $k=t+3$.	For $1\leq j \leq 2$, we get 
	$
	{2\brack j}/{3\brack j}\leq 
	(q+1)/(q^2+q+1)=1-1/(1+1/q+1/q^{2})\leq3/7.
	$
	By $x\geq 4$, we have 
	$
	\varphi(k,t,t+2,t+2,0)
	\leq
	3\varphi(k,t,t+2,k,0)/7
	+4{4\brack1}^{3}/7$.
	Since $x\geq4$ and $n-t-1\geq7$, we know
	$$	\frac{{4\brack1}^{3}}
	{{x\brack1}{n-t-1\brack2}}
	\leq
	\frac{{4\brack1}^{2}}{{7\brack2}}
	=
	\frac{(q+1)(q^4-1)^2}
	{(q^7-1)(q^6-1)}
	\leq \frac{q+1}{q^5}\leq\frac3{32}.
	$$
	This 	together with Lemma \ref{2609153} implies
	$$
	\frac{\varphi(k,t,t+2,t+2,0)}
	{{x\brack1}{n-t-1\brack2}}
	<
	\frac37\cdot\frac{315}{512}
	+\frac47\cdot\frac3{32}
	=\frac{1137}{3584}.
	$$
	Moreover, we know ${t+4\brack2}{n-t-2\brack1}/ \left({x\brack1}{n-t-1\brack2} \right)$ is equal to
	$$
	\frac{(q^{t+4}-1)(q^{t+3}-1)}
	{(q^x-1)(q^{n-t-1}-1)}\leq 		\frac{q^{t+1}}{q^x-1}
	\leq\frac{2^{t+1}}{2^{x}-1}\leq \frac{2^{t+1}}{3(2^{t}+1)}=\frac{2}{3}-\frac{2}{3(2^{t}+1)}\leq \frac{2}{3}-\frac{1}{3\cdot 2^{t}}.
	$$
	The quotient of the LHS 
	by ${x\brack1}{n-t-1\brack2}$ is at most $10579/10752-1/(3\cdot 2^{t})$.  It follows from Lemma \ref{2608281} that the required result holds.
\end{proof}

\begin{Lemma}\label{2609052}
	Let  $k\geq t+2$ and $n\geq2k+1$. Set $r=k-t$.
	If $r\geq 3$, then
	$$
	\begin{aligned}
		\Phi_{1}(n,k,t,t+1,t+1,r-3)
		+{2\brack1}{n-t-1\brack r-1}
		+q^2{t\brack1}{r+1\brack1}{n-t-2\brack r-2}
		<m(n,k,t).
	\end{aligned}
	$$
	If $r=2$, then
	$
	{2\brack1}{n-t-1\brack1}
	+q^2{t\brack1}{3\brack1}<m(n,k,t)
	$.
\end{Lemma}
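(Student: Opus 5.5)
The plan is to normalize everything by $D:={x\brack1}{n-t-1\brack r-1}$ and then invoke Lemma \ref{2608281}. That lemma gives $m(n,k,t)>(1-\frac{1}{3\cdot2^t})D$, so it suffices to show that the left-hand side divided by $D$ is at most $1-\frac{1}{3\cdot 2^t}$. We treat the case $r\ge3$ first and the case $r=2$ at the end.

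\textbf{Case $r\geq 3$.} The left-hand side splits into four pieces, each bounded separately.
\begin{itemize}
\item[(a)] $\varphi(k,t,t+1,k,0)/D$. Apply the first inequality of Lemma \ref{2609153} with $y=x$. Since $x\ge t+2$, we have $(q^{t+1}-1)/(q^{x}-1)\le q^{-1}$. The remaining factor is at most $(q^2-1)(q^3-1)/(q^4(q-1)^{r-1})\cdot q^{-(r-1)(t+1)}$, using $n\ge 2k+1$. This term is tiny: for $q=2$, $r=3$, $t=1$ it is at most about $21/(16\cdot 2\cdot 16)$.
\item[(b)] The early sum $\sum_{i=1}^{r-3}\psi(k,t,t+1,i){n-k+i\brack i}$. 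This is only present when $r\ge4$. By the last statement of Lemma \ref{2608317} it is at most $\frac{9}{112}D$.
\item[(c)] ${2\brack1}{n-t-1\brack r-1}/D={2\brack1}/{x\brack1}$. Since $x\ge t+2\ge3$, this is at most $(q+1)/(q^2+q+1)\le 3/7$. More precisely it is $\le (q^2-1)/(q^{x}-1)$, which shrinks as $x$ grows.
\item[(d)] $q^2{t\brack1}{r+1\brack1}{n-t-2\brack r-2}/D$. Using ${n-t-2\brack r-2}/{n-t-1\brack r-1}=(q^{r-1}-1)/(q^{n-t-1}-1)$ and Lemma \ref{2604282}, this is at most
\[
\frac{q^2(q^t-1)(q^{r+1}-1)(q^{r-1}-1)}{(q-1)^2(q^x-1)(q^{n-t-1}-1)}.
\]
With $n-t-1\ge 2r+t$, the exponent count gives roughly $q^{2+t+2r-2r-t-x}\cdot\frac{1}{(q-1)^2}=q^{2-x}/(q-1)^2$. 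This is at most $1/2$ for $q=2$, $x=3$, which is too crude.
\end{itemize}

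The main obstacle is therefore the worst case $q=2$, $t=1$, $r=3$, where (c) and (d) together approach the bound $1-1/6$. In that regime $x=\max\{t+2,r+1\}=r+1$, so ${x\brack1}={r+1\brack1}$. Piece (d) then becomes exactly
\[
q^2{t\brack1}\frac{q^{r-1}-1}{q^{n-t-1}-1}\le \frac{q^{2}(q^t-1)}{(q-1)\,q^{2r+t-1}}\cdot(1+\epsilon)\approx q^{3-2r}/(q-1),
\]
which is at most $1/8$ for $q=2$, $r=3$. Piece (c) is $(q^2-1)/(q^{r+1}-1)\le 1/5$. The total is then well below $5/6$.

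In general we split according to whether $x=t+2$ (when $t\ge r-1$) or $x=r+1$, and bound (c) and (d) by explicit geometric expressions decreasing in $q$, $t$, $r$ and $n$. This reduces to checking the minimal parameters $q=2$, $n=2k+1$. Any residual small cases would be checked directly, as in the last step of Lemma \ref{2609216}.

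\textbf{Case $r=2$.} Here $k=t+2$ and $x=t+2$, and the left-hand side over $D={t+2\brack1}{n-t-1\brack1}$ is
\[
\frac{q^2-1}{q^{t+2}-1}+\frac{q^2(q^t-1)(q^3-1)}{(q-1)(q^{t+2}-1)(q^{n-t-1}-1)}.
\]
With $n-t-1\ge t+4$, the first term is at most $3/7$ and the second is at most $q^{3-t-4}\cdot\frac{q^2}{q-1}\cdot(\text{small})\le 1/4$. The sum is below $1-\frac{1}{6}$, and for larger $t$ the bound only improves against $1-\frac1{3\cdot 2^t}$. Lemma \ref{2608281} then concludes.
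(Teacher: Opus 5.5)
\textbf{Verdict: genuine gap.} Your plan is the paper's: divide by $D={x\brack1}{n-t-1\brack r-1}$, bound the pieces (a)--(d) separately, and finish with Lemma~\ref{2608281}. Your handling of (b) and of the case $r=2$ is fine, apart from a stray factor $q-1$ in the denominator of your second $r=2$ term; the true term is still at most $q^{-(t+1)}\le 1/4$.

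For $r\ge3$, however, your estimate in (a) is false. Lemma~\ref{2609153} gives the factor $q^{-(r-1)(n-2k+t-2)}$. Since $n\ge2k+1$ only yields $n-2k+t-2\ge t-1$, you get $q^{-(r-1)(t-1)}$, which equals $1$ for $t=1$, not $q^{-(r-1)(t+1)}$.
\begin{itemize}
\item Hence (a) is not tiny. At $(q,t,r,n)=(2,1,3,9)$ it equals $9675/40005\approx0.24$, the largest of the four pieces, while (c)+(d) is only about $0.29$ there. You have misidentified the bottleneck.
\item With your factor $(q^{t+1}-1)/(q^x-1)\le q^{-1}$ and the correct exponent, your bound for (a) becomes $21/32$. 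Your bounds for (a) and (c) then already sum to $21/32+1/5>5/6$, so your estimates do not close even in the case you single out.
\item In (d), you rejected the crude bound by evaluating it at $x=3$, which cannot happen when $r\ge3$. There $x\ge r+1\ge4$, and the crude bound is about $q^{1-r}/(q-1)\le 1/4$, which is enough.
\item Your refined (d) computation also drops the factor $q^{r-1}-1$, so the true order is $q^{1-r}$, not $q^{3-2r}$.
\end{itemize}

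Beyond that, the general case is only asserted. The claimed monotonicity in $t$ fails: for $q=2$, $r=3$, $n=2k+1$, piece (d) is $12/127$ at $t=1$ but $36/255$ at $t=2$. Since $t$ and $r$ are unbounded, ``checking the residual small cases directly'' is not a finite task without uniform bounds.

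Those uniform bounds are exactly what is missing, and they are what the paper proves.
\begin{itemize}
\item In (a), use $(r-1)(t-1)\ge 2t-2$, $(q^{t+1}-1)/q^{2t-2}\le q^2-1$, $q^x-1\ge q^4-1$ and $(q-1)^{r-1}\ge(q-1)^2$ to get (a)$\,\le 21/80$.
\item For (c)+(d), divide by the smaller quantity ${r+1\brack1}{n-t-1\brack r-1}\le D$. This gives
\[
\frac{q^2-1}{q^{r+1}-1}+q^2{t\brack1}\frac{q^{r-1}-1}{q^{n-t-1}-1}\le\frac{1}{q^2+1}+\frac{1}{q^{r-1}}\le\frac{9}{20}
\]
with no case split on $x$.
\end{itemize}
With (b)$\,\le 9/112$, the total is at most $111/140<5/6\le 1-\frac{1}{3\cdot2^t}$, and Lemma~\ref{2608281} concludes.
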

\begin{proof} 
	
	If $k=t+2$, then $x\geq 3$, and further 
	$$ 	   \frac{(q+1){n-t-1\brack 1}
		+q^{2}{t\brack 1}{3\brack 1}}
	{{3\brack 1}{n-t-1\brack 1}}
	=1-\frac{1}{1+\frac{1}{q}+\frac{1}{q^{2}}}
	+\frac{q^{2}(q^{t}-1)}{q^{n-t-1}-1}
	<\frac{3}{7}+\frac{1}{q^{n-2t-3}}
	\leq\frac{3}{7}+\frac{1}{4}\leq \frac{5}{6}. $$
	In the following, we assume $k\geq t+3$.
	
	Note that $(r-1)(n-2k+t-2)\geq 2t-2$
	and   $ (q^{t+1}-1)/q^{2t-2}\leq q^{2}-1$. 
	It follows from  Lemma \ref{2609153}  that $\varphi(k,t,t+1,k,0)/\left({x\brack 1}{n-t-1\brack r-1} \right)$ is at most 
	$$
	\frac{q^{t+1}-1}{q^{x}-1} \cdot\frac{(q^{2}-1)(q^{3}-1)}{q^{4}(q-1)^{r-1}} \cdot\frac{1}{q^{2t-2}} \leq\frac{(q^{2}-1)^{2}(q^{3}-1)} {q^{4}(q^{4}-1)(q-1)^{2}}
	= \left(\frac{1}{q}+\frac{1}{q^{2}}\right) \left(\frac{1}{q^{2}}+\frac{1}{q(q^{2}+1)}\right) \leq\frac{21}{80}.
	$$
	If $r\geq 4$, then by Lemma \ref{2608317}, we get
	$\sum_{i=1}^{r-3}\psi(k,t,t+1,i){n-k+i\brack i}\leq 
	9{x\brack 1}{n-t-1\brack r-1}/112
	$.
	It  holds for $r=3$ since the sum is empty.
	Thus $\Phi_{1}(n,k,t,t+1,t+1,r-3)/ \left({x\brack1}{n-t-1\brack r-1}\right)\leq 12/35$. 
	Finally, by $k\geq t+3$ and $n\geq 2k+1$, we have
	$$
	\frac{{2\brack 1}{n-t-1\brack r-1}+q^{2}{t\brack 1}{r+1\brack 1}{n-t-2\brack r-2}}
	{{r+1\brack 1}{n-t-1\brack r-1}}
	=\frac{q^{2}-1}{q^{r+1}-1}+q^{2}{t\brack 1}\frac{q^{r-1}-1}{q^{n-t-1}-1}
	\leq \frac{1}
	{q^{2}+1}+\frac{1}{q^{r-1}}\leq\frac{9}{20}.
	$$
	From the discussion above, the quotient of the LHS by
	${x\brack 1}{n-t-1\brack r-1}$ is at most
	$111/140\leq5/6.
	$
	This together with Lemma \ref{2608281} yields the desired result.
\end{proof}

To keep the exposition concise, we evaluate several functions directly at given parameter values in the following lemma.

\begin{Lemma}\label{2609175}
	Let $q=2$. Then the equalities in Table \ref{2609176} hold.
\end{Lemma}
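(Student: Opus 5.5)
This lemma asks us to check a finite list of numerical equalities for $q=2$: the entries of Table \ref{2609176}, which record values of $\Phi_2$, $\varphi$, $\psi$, $a(n,k,t)$ and $h(n,k,t)$ at specific small parameters. I would prove it by direct evaluation, with no structural argument.

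For each row I will substitute $q=2$ into the definitions (\ref{2609111}), (\ref{2609112}), (\ref{2609115}) and (\ref{2609113}). These rows presumably cover the residual cases left open by Lemma \ref{2609216} in part (ii) of Lemma \ref{2609232} and part (ii) of Lemma \ref{2609121}, namely small $k$ with $n=2k+1$. I will also substitute into the closed forms
\[
a(n,k,t)={t+2\brack 1}{n-t-1\brack r-1}-q{t+1\brack 1}{n-t-2\brack r-2},\qquad
h(n,k,t)={n-t\brack r}-q^{r(r+1)}{n-k-1\brack r}+q^{r+1}{t\brack 1}.
\]
In each entry only Gaussian binomials ${a\brack b}_2$ with small arguments appear. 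I will compute them from the product formula in Section \ref{2609123}, or recursively via Lemma \ref{2609151} with $(a,b,c)=(a-1,1,c)$, which gives the $q$-Pascal rule ${a\brack c}={a-1\brack c}+q^{a-c}{a-1\brack c-1}$.

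The steps, in order, are as follows.
\begin{enumerate}
\item Tabulate ${m\brack 1}_2=2^m-1$, together with the needed ${m\brack 2}_2$ and ${m\brack 3}_2$.
\item Evaluate each $\varphi$ term by its finite sum over $j\le t$.
\item Sum the $\psi(\cdot)\,{n-k+i\brack i}$ terms to obtain each $\Phi_2$ value.
\item Add the final-layer term of that row, and compare with $m=\max\{a,h\}$. By Lemma \ref{2609131}, this maximum is $h$ if $k\ge 2t+2$ and $a$ otherwise.
\end{enumerate}
As a sanity check, the computation in Lemma \ref{2609121}(ii) gives $h(9,4,1)=35731$, and I will reproduce that value first.

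The main obstacle is only bookkeeping: matching the exact argument conventions. These are the index $r-3$ in $\Phi_2$, the exponent $\tau-t-1$ versus $\tau-t-2$, and $x=\max\{t+2,r+1\}$. An off-by-one error would silently change an entry. To guard against this, I will cross-check each row against the inequality it is used for, and confirm that the claimed value is consistent with the bound $(1-1/(3\cdot 2^t)){x\brack 1}{n-t-1\brack r-1}<m$ from Lemma \ref{2608281}.
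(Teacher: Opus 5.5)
Your plan is correct and matches the paper, which gives no argument for this lemma beyond direct evaluation of the definitions at $q=2$; carrying it out confirms every entry (for instance $\Phi_2(9,4,1,3,0)=\varphi(4,1,3,4,0)=22275$, ${12\brack 5}_2=114429029715$ and $h(13,6,1)=46783294867$). Two small bookkeeping corrections: the values of $a$ and $h$ also need ${m\brack 4}_2$ and ${m\brack 5}_2$ (e.g.\ ${11\brack 4}_2$, ${14\brack 4}_2$, ${14\brack 5}_2$), and only rows (i)--(ii) come from the case $(9,4,1,2)$ excluded from Lemma \ref{2609216}, while rows (iii)--(vi) and (vii)--(x) are the residual cases of Lemmas \ref{260907} and \ref{2609099}, respectively.
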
 

\begin{table}[htbp]
	\centering
	\caption{Exact comparisons for $q=2$.}
	\label{2609176}
	\renewcommand{\arraystretch}{1.4}
	\setlength{\tabcolsep}{4pt}
	\resizebox{\textwidth}{!}{%
		\begin{tabular}{|l|l|l|}
			\hline
			No. & Upper bounds & $m(n,k,t)$ \\
			\hline
			(i)
			& $\Phi_2(9,4,1,3,0)+{3\brack 1}{4\brack 1}{6\brack1}=28890$
			& $m(9,4,1)=35731$ \\
			\hline
			(ii)
			& $\Phi_2(9,4,1,3,0)+{3\brack 1}{4\brack 1}{6\brack1}+8{2\brack 1}^{2}{6\brack 1}=33426$
			& $m(9,4,1)=35731$ \\
			\hline
			(iii)
			& $\Phi_2(13,6,1,5,1)+\psi(6,1,4,2){9\brack2}=36024308703$
			& $m(13,6,1)=46783294867$ \\
			\hline
			(iv)
			& $\Phi_2(15,7,2,6,1)+\psi(7,2,5,2){10\brack2}=600461767703$
			& $m(15,7,2)=811970486547$ \\
			\hline
			(v)
			& $\Phi_2(17,8,3,7,1)+\psi(8,3,6,2){11\brack2}=9660339410823$
			& $m(17,8,3)=13524074907667$ \\
			\hline
			(vi)
			& $\Phi_2(19,9,4,8,1)+\psi(9,4,7,2){12\brack2}=190977564469863$
			& $m(19,9,4)=224771168900499$ \\
			\hline
			(vii)
			& $\varphi(6,2,5,5,0)+\varphi(6,2,5,5,1){8\brack1}=113073338$
			& $m(13,6,2)=183628627$ \\
			\hline
			(viii)
			& $\varphi(7,3,6,6,0)+\varphi(7,3,6,6,1){9\brack1}=1100780954$
			& $m(15,7,3)=1574407571$ \\
			\hline
			(ix)
			& $\varphi(8,3,7,7,0)+\varphi(8,3,7,7,1){10\brack1}=5104801745526$
			& $m(17,8,3)=13524074907667$ \\
			\hline
			(x)
			& $\varphi(9,4,8,8,0)+\varphi(9,4,8,8,1){11\brack1}=94091538641590$
			& $m(19,9,4)=224771168900499$ \\
			\hline
		\end{tabular}%
	}
\end{table}

\begin{Lemma}\label{260907}
	Let $k\geq t+4$, $n\geq 2k+1$ and $1\leq N\leq k-t-3$.  	If $q\geq 3$ or $n\geq 2k+2$ or $N\geq 2$, then 
	\begin{equation*}
		\Phi_{2}(n,k,t,k-1,N-1) +\psi(k,t,k-N,N){n-k+N\brack N} <m(n,k,t).
	\end{equation*}
\end{Lemma}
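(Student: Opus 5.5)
The plan is to normalize everything by ${x\brack1}{n-t-1\brack r-1}$ and then invoke Lemma~\ref{2608281}. That lemma reduces the claim to showing that the left-hand side is at most $\bigl(1-\frac{1}{3\cdot 2^{t}}\bigr){x\brack1}{n-t-1\brack r-1}$, and it suffices to reach a bound like $5/6$ or $11/12$ as in Lemma~\ref{2609216}. Expand
\[
\Phi_2(n,k,t,k-1,N-1)+\psi(k,t,k-N,N){n-k+N\brack N}
\]
into three groups of terms and treat each separately:
\begin{enumerate}
\item[(a)] the term $\varphi(k,t,k-1,k,0)$;
\item[(b)] the sum $\sum_{i=1}^{N-1}\psi(k,t,k-i-1,i){n-k+i\brack i}$;
\item[(c)] the final term $\psi(k,t,k-N,N){n-k+N\brack N}$.
\end{enumerate}

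For (a), apply Lemma~\ref{2609153} with $a=k-1$ and $y=x$. This gives a ratio of at most $\frac{315}{512}\,q^{-(r-1)(n-2k-1)}$, up to the factor $(q^{r+1}-1)/(q^{x}-1)\le q$-ish, which needs checking. The bound is at most $315/512$ at $q=2$, $n=2k+1$, and strictly smaller once $n\ge 2k+2$ or $q\ge3$. For (b), since $N-1\le r-4\le r-3$, the sum is dominated by the full sum treated in Lemma~\ref{2608317}. That sum is at most $3/16$, and the refined bound $\frac{1}{(q-1)^r}q^{-2(n-2k-1)}q^{-(r-3)}$ shows it decays in $q$ and in $n$.

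For (c), write $\psi(k,t,k-N,N)={k-N\brack t}{r-N+1\brack1}^{r-N}$ and compute the ratio exactly as in \eqref{2609161}, but with ${k-N\brack t}$ in place of ${k-N-1\brack t}$. This costs an extra factor ${k-N\brack t}/{k-N-1\brack t}=(q^{k-N}-1)/(q^{r-N}-1)\approx q^{t}$. The previous estimate \eqref{2609163} then becomes roughly
\[
\frac{q^{t}}{(q-1)^{r-1}}\,q^{-(r-N-1)(N+n-2k-1)}.
\]
The exponent $(r-N-1)(N+n-2k-1)$ is at least $2N\ge2$ since $r-N-1\ge2$. The gain from $q^{-(r-N-1)(n-2k-1)}$ and from $N\ge 2$ has to beat the $q^t$ loss. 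The identification $x\ge t+2$ absorbs a factor $q^{t+1}/(q^{x}-1)$ from ${x\brack1}$; this is where the $q^t$ is neutralized.

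The main obstacle is the borderline regime $q=2$, $n=2k+1$, $N=1$, which the hypothesis excludes, and the nearby cases just inside the hypothesis: $q=2$, $n=2k+1$, $N=2$, and small $r$ such as $r=5$, $t=1$. In those cases the crude bounds $315/512+3/16+(\text{term (c)})$ may exceed $5/6$. My first attempt is to keep the exact products of \eqref{2609161} for term (c) instead of the Lemma~\ref{2604282} bounds, using the monotonicity in $n$ and $q$ of each normalized term to reduce to $q=2$, $n=2k+1$ or $n=2k+2$. If a finite set of small $(k,t)$ still fails, I will verify it by direct computation as in Table~\ref{2609176}. For large $k$, the factor $(q-1)^{-(r-1)}$, when $q\ge3$, or the power $q^{-(r-N-1)N}$ makes the sum of the three contributions comfortably below $1-\frac{1}{3\cdot2^{t}}$.
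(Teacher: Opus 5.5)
Your three-term decomposition and the reduction through Lemma~\ref{2608281} are the paper's. The cases $q\ge3$ and $n\ge2k+2$ also go through: with a correct bound on (c) one gets $C\le q/(q-1)^{r-1}\le 3/8$, resp.\ $C\le q^{-(r-N-2)}\le\frac12$, so no monotonicity reduction is needed. The gap is in term (c), which is exactly where the remaining case $q=2$, $n=2k+1$, $N\ge2$ lives.

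Your claim that $x\ge t+2$ lets ${x\brack1}$ absorb $q^{t+1}$ and neutralize the extra factor $q^t$ is wrong. In \eqref{2609163} that same ${x\brack1}$ has already been spent cancelling $q^{r+1}-1$. The net factor is
\[
\frac{(q^{r+1}-1)(q^{k-N}-1)}{(q^{x}-1)(q^{r-N}-1)}\approx q^{t+r+1-x}=q^{\min\{t,r-1\}},
\]
so (c) is of order $q^{\min\{t,r-1\}}(q-1)^{-(r-1)}q^{-(r-N-1)(N+n-2k-1)}$, not $O(1)$ times the decaying power.

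At $q=2$, $n=2k+1$, $N=2$ this is roughly $2^{\min\{t,r-1\}-2(r-3)}$, which does not decay as $k\to\infty$ with $r$ fixed. The actual normalized value is about $\frac12$ for $r=5$, $t\ge4$, and about $0.28$ for $r=6$, $t\ge5$. With your bounds $A\le 315/512$ and $B\le 3/16$, the total already exceeds $1-\frac{1}{3\cdot 2^t}$ at $(r,t,N)=(6,5,2)$. Even the $r=6$ value $\approx0.1025$ of the first bound in Lemma~\ref{2608317} does not rescue it. So your assertion that the power $q^{-(r-N-1)N}$ makes all large cases comfortable is false. You give no argument that only finitely many $(k,t,N)$ remain for direct computation.

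What is missing is precisely the content of the paper's proof of this case: tracking the $t$-dependence uniformly.
\begin{itemize}
\item Term (c) carries the factor $2^{-(x-t-2)}$, which is small when $t<r-1$.
\item Term (a) carries $(q^{r+1}-1)/(q^x-1)$, which is at most $1$ (not ``$q$-ish'') and is small when $t>r-1$.
\item Near $t=r-1$ both are of constant size, and the available margin is about $1\%$.
\end{itemize}

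For $r\ge6$ the paper closes this as follows. Term (b) only involves $i\le N-1\le r-4$, and $i(r-i-1)\ge r+2i-4$ gives $B\le 5/64$. Next, $3^{N-1}2^{(r-N-3)(N-1)}\ge 3\cdot 2^{r-5}$ (with a direct check at $r=6$) gives $C\le 2/(7\cdot 2^{x-t-2})$. This is rewritten as $C\le\frac{199}{672}-\frac{1}{3\cdot2^t}$, which absorbs the $t$-dependent threshold, and $\frac{315}{512}+\frac{5}{64}+\frac{199}{672}<1$ finishes.

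For $r=5$, which forces $N=2$, the paper handles $t\ge5$ by using $x=t+2$ in (a), so that $A\le\frac{63}{2^{t+2}-1}\cdot\frac{315}{512}$. The cases $t\le4$ are Table~\ref{2609176}(iii)--(vi).

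Your plan contains none of these estimates. Its heuristic also misplaces the difficulty: the tight cases are $t$ close to $r-1$, where (a) and (c) are simultaneously large.
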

\begin{proof}
	
	The  LHS consists of
	$\varphi(k,t,k-1,k,0)$, the sum over $1\leq i\leq N-1$,
	and the terminal term $\psi(k,t,k-N,N){n-k+N\brack N}$. 
	In this proof, let $A$, $B$ and $C$ denote the quotients of these three terms by ${x\brack1}{n-t-1\brack r-1}$, respectively. By Lemma~\ref{2608281}, it is sufficient to show 
	$$A+B+C\leq1-\frac{1}{3\cdot2^t}.$$

	Note  $\psi(k,t,k-N,N)/\psi(k,t,k-N-1,N)=(q^{k-N}-1)/(q^{r-N}-1)$. By (\ref{2609161}), we get
	$$C=\frac{(q^{r-N+1}-1)^{r-N}}{(q-1)^{r-N}}\cdot \frac{q^{t+1}-1}{q^{x}-1}\cdot\prod_{i=1}^{r-N-1}\frac{q^{k-N-i+1}-1}{q^{n-t-i}-1}\cdot \prod_{i=1}^{N-1}\frac{q^{r-i}-1}{q^{N-i+1}-1}.$$
	From  Lemma \ref{2604282} and $(q^{r-i}-1)/(q^{N-i+1}-1)\leq (q^{r-N+1}-1)/(q^{2}-1)$, we obtain
	$$C\leq \frac{q}{(q-1)^{r-1}(q+1)^{N-1}}\cdot\left(\frac{q^{r-N+1}-1}{q^{r-N+1}}\right)^{r-1}\cdot\frac{1}{q^{(r-N-1)(n-2k-1)}}\cdot \frac{1}{q^{(r-N-3)(N-1)}}\cdot\frac{1}{q^{x-t-2}}.$$

	If $q\geq 3$, then  $A\leq 2080/19683$,  $B\leq 3/16$ due to Lemmas \ref{2609153}, \ref{2608317}, and $C\leq q/(q-1)^{r-1}\leq 3/8$. If $n\geq 2k+2$, then $A\leq 1/8$, $B\leq 3/16$ by Lemmas \ref{2609153}, \ref{2608317}, and $C\leq q/ q^{r-N-1}\leq 1/2$.  In either case, we have $A+B+C\leq 5/6$, as desired. 
	
	Suppose $q=2$, $n=2k+1$ and $N\geq 2$.  We first consider $r\geq 6$. By Lemma \ref{2609153}, we know $A\leq 315/512$. Since $i(r-i-1)\geq r+2i-4$ for $1\leq i\leq r-4$, by (\ref{2609163}), we have 
	$$B\leq \sum_{i=1}^{r-4}\frac{1}{2^{i(r-i-1)}}\leq \sum_{i=1}^{r-4}\frac{1}{2^{r+2i-4}}=\frac{1}{3}\cdot\left(1+\frac{1}{2^{r-4}}\right)\cdot\frac{1}{2^{r-4}}\left(1-\frac{1}{2^{r-4}}\right)\leq \frac{5}{64}. $$
	Moreover, by $3^{N-1}\cdot2^{(r-N-3)(N-1)}\geq 3\cdot 2^{N-2}\cdot 2^{r-N-3}\geq 3\cdot 2^{r-5}$,  we have 
	$$C\leq  \frac{2}{3^{N-1}} \cdot \frac{1}{2^{(r-N-3)(N-1)}}\cdot\frac{1}{2^{x-t-2}}\cdot\left(\frac{2^{r-N+1}-1}{2^{r-N+1}}\right)^{r-1}\leq \frac{1}{3\cdot 2^{r-6}}\cdot\frac{1}{2^{x-t-2}} \left(\frac{2^{r-1}-1}{2^{r-1}}\right)^{r-1}. $$
	By a direct calculation for $r=6$ and by bounding the last factor by $1$ for $ r\geq 7$, we obtain $C\leq 2/(7\cdot2^{x-t-2})$ in both cases.  We further conclude
	$$C\leq\frac{2}{7}-\frac{2}{7}\left(1-\frac{1}{2^{x-t-2}}\right)\leq \frac{2}{7}-\frac{1}{3\cdot2^{t}}\left(1-\frac{1}{2^{x-t-2}}\right)\leq \frac{199}{672}-\frac{1}{3\cdot2^{t}}.$$ 
	Then $A+B+C\leq 1-1/(3\cdot 2^{t})$, as required. 
	Next,  assume $r=5$. Then $N=2$. We consider  $t\geq 5$, which implies $x=t+2$. By Lemma \ref{2609153} and (\ref{2609163}), we know 
	$$A\leq \frac{63}{2^{t+2}-1}\cdot \frac{315}{512}\leq \frac{19845}{65024},  \ \  B\leq \frac{7}{8}\cdot \frac{1}{16}=\frac{7}{128}.$$ 
	It follows from $C\leq (2/3)\cdot (15/16)^{4}$ that $A+B+C\leq 95/96$, as desired. For the remaining cases $1\leq t\leq4$,
	Table \ref{2609176} (iii)--(vi)
	verifies the claimed inequality directly.
\end{proof}

In the calculations below, we will use relations between values of $\varphi(k,t,a,b,i)$ at different parameter values. For convenience,  denote its individual summands by
$$\phi(k,t,a,b,i,j)=q^{2j^{2}} {t\brack j}{a-t\brack j}{b-t\brack j} {k-t-i+1\brack1}^{k-t-i-j}.$$
With this notation, we have $ \varphi(k,t,a,b,i)=\sum_{j=0}^{t}\phi(k,t,a,b,i,j)$.

\begin{Lemma}\label{2609099}
	Let $k\geq t+4$, $n=2k+1$ and $q=2$. Then 
	$$\varphi(k,t,k-1,k-1,0)+\varphi(k,t,k-1,k-1,1){k+2\brack 1}<m(n,k,t).$$
\end{Lemma}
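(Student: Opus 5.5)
We will normalize everything by ${x\brack1}{n-t-1\brack r-1}$ and use Lemma~\ref{2608281}: it suffices to show that the quotient of the left-hand side by ${x\brack1}{2k-t\brack r-1}$ is at most $1-1/(3\cdot2^t)$. Here $n=2k+1$, $q=2$ and $r=k-t\geq4$. Write $L_0=\varphi(k,t,k-1,k-1,0)$ and $L_1=\varphi(k,t,k-1,k-1,1){k+2\brack1}$, and let $A_0,A_1$ be their normalized quotients.

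\textbf{Bounding $A_0$.} Comparing summands, $\phi(k,t,k-1,k-1,0,j)/\phi(k,t,k-1,k,0,j)={r-1\brack j}/{r\brack j}\leq {r-1\brack1}/{r\brack1}<1/2$ for $j\ge1$. Hence
$L_0\leq \tfrac12\varphi(k,t,k-1,k,0)+\tfrac12{r+1\brack1}^r$.
The first term is handled by Lemma~\ref{2609153} with $a=k-1$ and $n=2k+1$, which gives at most $\tfrac12\cdot\tfrac{315}{512}\cdot\tfrac{q^{r+1}-1}{q^x-1}$ up to the displayed factors. The $j=0$ term is handled by Lemma~\ref{2609152} and is exponentially small in $r$.

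\textbf{Bounding $A_1$.} Write $\varphi(k,t,k-1,k-1,1)=\sum_j q^{2j^2}{t\brack j}{r-1\brack j}^2{r\brack1}^{r-1-j}$ and multiply by ${k+2\brack1}$. Then compare with ${n-k+1\brack1}\psi(k,t,k-2,1)$, the $i=1$ term bounded in \eqref{2609161}/\eqref{2609163}. A cleaner route is direct: using ${r\brack1}^{r-1}/{2k-t\brack r-1}\lesssim q^{-(r-1)(t+1)}\cdot(\text{const})$ via Lemma~\ref{2604282}, the $j$-th summand gives roughly $2^{2j^2+tj+2j(r-1-j)-j(r)}\cdot 2^{k+2}\cdot 2^{r(r-1-j)}/2^{(r-1)(k+1)}$, after dividing by ${x\brack1}$. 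The exponent is maximized at $j=t$ when $t\le r-1$, giving roughly $2^{-\Theta(r)}$. I would therefore aim for $A_1\le c\,2^{-(r-3)}$ with an explicit constant, say $A_1\le 1/8$ for $r\ge5$.

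\textbf{Combining and the main obstacle.} Adding the estimates should give $A_0+A_1\le \tfrac{315}{1024}+\text{small}+\tfrac18<5/6$ for all but finitely many $(k,t)$, and these remaining cases are checked numerically. This mirrors Table~\ref{2609176} (vii)--(x), which covers $(k,t)=(6,2),(7,3),(8,3),(9,4)$. The main obstacle is the regime of small $r$ (particularly $r=4,5$) with large $t$. There ${x\brack1}={t+2\brack1}$ grows, and the $j=t$ summand of $L_1$ carries $q^{2t^2}{r-1\brack t}^2$, which vanishes for $t>r-1$ but is delicate for $t\approx r-1$. I would first try bounding the $j$-sum by $\max_j$ times $(t+1)$ and using the ratio of consecutive summands, which is at most $2^{4j+2}{t\brack1}\cdot(\cdots)/{r\brack1}$, to show geometric decay once $t$ is large. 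Finitely many small configurations would be left to the exact table values.
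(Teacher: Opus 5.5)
\textbf{Where the proposal fails: the bound on $A_1$.} Your reduction via Lemma~\ref{2608281} is sound, and so is your bound on $A_0$. Comparing summand-wise with $\varphi(k,t,k-1,k,0)$ gives ratio ${r-1\brack j}/{r\brack j}<1/2$ for $j\geq1$, with the same range of nonzero summands. Lemma~\ref{2609153} with $a=k-1$ then gives $A_0\leq\frac{315}{1024}\cdot\frac{2^{r+1}-1}{2^x-1}$ plus a negligible $j=0$ term.

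The genuine gap is $A_1$, which is where the difficulty of the lemma lies, and there you only give a heuristic. The heuristic is also off: ${t\brack j}\approx 2^{j(t-j)}$ contributes $-j^2$ to the exponent, not $-jr$, so you underestimate every summand with $j\geq1$.

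Your target $A_1\leq 1/8$ for $r\geq5$ is in fact false:
\begin{itemize}
\item From the exact numbers behind Table~\ref{2609176}~(ix), $(k,t)=(8,3)$ gives $A_1=\varphi(8,3,7,7,1){10\brack1}/\bigl({6\brack1}{13\brack4}\bigr)\approx0.30$.
\item Similarly, $(k,t)=(10,5)$ gives $A_1\approx0.26$, and $(k,t)=(11,5)$ gives $A_1\approx0.22$.
\end{itemize}
The reason is structural. For $t\geq r-1$ the normalized top summand $j=r-1$ alone equals $2^{2(r-1)^2}{t\brack r-1}{k+2\brack1}/\bigl({t+2\brack1}{t+2r\brack r-1}\bigr)$, and this tends to $2^{2-r}$ as $t\to\infty$. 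So for each fixed small $r$, $A_1$ stays of order $2^{2-r}$ for infinitely many $t$. Hence ``finitely many remaining cases checked numerically'' cannot close the argument.

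Your fallback ideas do not rescue this:
\begin{itemize}
\item Bounding the sum by $(t+1)\max_j$ gives a bound that grows with $t$.
\item Using instead the true number $\min\{t,r-1\}+1$ of nonzero summands still fails. At $r=4$ the largest summand tends to $1/4$, so the bound tends to $1$.
\item There is no geometric decay to exploit. The sum is dominated by its top index $j=\min\{t,r-1\}$, whose normalized size is of order $2^{2-r}$ for every $t$.
\end{itemize}

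\textbf{What is needed, and how the paper does it.} You need an estimate for $A_1$ that is uniform in $t$. The paper compares summand-wise with $\varphi(k-1,t,k-2,k-1,0)$:
\begin{itemize}
\item For $j\leq\min\{t,r-3\}$ the ratio is $(2^{r-1}-1)/(2^{r-j-1}-1)\leq(2^{r-1}-1)/3$.
\item The two top summands $j=r-2$ and $j=r-1$ are kept explicitly.
\item Lemma~\ref{2609153} is applied with $k-1$ in place of $k$ (so $n-2=2(k-1)+1$ and $a=k-2$). This bounds the bulk by $\frac{105}{256\cdot2^{x-r-1}}$.
\item The top terms are controlled through ${t\brack s}/{2r+t\brack s}\leq 2^{-2rs}$. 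They contribute at most $\frac{1}{3\cdot2^{x-6}}+\frac{1}{3\cdot2^{r-4}}$, for all $t$ simultaneously.
\end{itemize}
This leaves only $(r,t)\in\{(4,2),(4,3),(5,3),(5,4)\}$ for Table~\ref{2609176}~(vii)--(x).

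If you adopt this, note that your $A_0$ bound is weaker than the paper's for $t\geq2$. By comparing with $a=k-2$, the paper gains the factor $2/(2^t+1)$ in place of your $1/2$. The budget is tight: at $(r,t)=(4,4)$, your $A_0$ bound plus the paper's bound for $A_1$ already exceeds $1$. As written, the proposal does not prove the lemma.
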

\begin{proof}
	For $0\le j\leq \min\{t,r-2\}$, the quotient $\phi(k,t,k-1,k-1,0,j)/\phi(k,t,k-2,k,0,j)$ is 
	$$
	\frac{{r-1\brack j}^{2}}{{r-2\brack j}{r\brack j}}
	=1+\frac{2^{r-j-1}(2^j-1)}
	{(2^{r-j-1}-1)(2^r-1)}
	\le 1+\frac{2^j-1}{2^j+1}
	=2-\frac{2}{2^j+1}
	\le 2-\frac{2}{2^t+1}
	=\frac{2^{t+1}}{2^t+1}.
	$$
	Moreover,  $\varphi(k,t,k-1,k-1,0)$ has at most
	one additional positive term compared with 
	$\varphi(k,t,k-2,k,0)$, namely $\phi(k,t,k-1,k-1,0,r-1)$. This implies 
	$$
	\varphi(k,t,k-1,k-1,0)\leq \frac{2^{t+1}}{2^{t}+1}\varphi(k,t,k-2,k,0)+2^{2(r-1)^{2}}{t\brack r-1}{r+1\brack 1}.
	$$
	For $0\le j\le\min\{t,r-3\}$, we know
	$\phi(k,t,k-1,k-1,1,j)/\phi(k-1,t,k-2,k-1,0,j)$ is equal to $(2^{r-1}-1)/(2^{r-j-1}-1)\leq (2^{r-1}-1)/3$. Moreover, we have 
	$$
	\phi(k,t,k-1,k-1,1,r-2)
	=(2^{r-1}-1)\phi(k-1,t,k-2,k-1,0,r-2).
	$$
	Similarly, a comparison of the positive summands in
	$\varphi(k,t,k-1,k-1,1)$ and
	$\varphi(k-1,t,k-2,k-1,0)$ shows that the former is at most
	$$
	\frac{2^{r-1}-1}{3}\varphi(k-1,t,k-2,k-1,0)+\frac{2^{2(r-2)^2+1}}{3}
	{t\brack r-2}(2^{r-1}-1)^2(2^r-1)
	+2^{2(r-1)^2}{t\brack r-1}.	
	$$
	During this proof, write 
	$$A=\frac{\varphi(k,t,k-1,k-1,0)}{{x\brack 1}{n-t-1\brack r-1}}, \ \ B=\frac{\varphi(k,t,k-1,k-1,1){k+2\brack 1}}{{x\brack 1}{n-t-1\brack r-1}}.$$  By Lemmas \ref{2604282} and \ref{2609153}, we have
	\begin{equation}\label{2609211}
		A\leq \frac{1}{2^{x-r-1}}\cdot \frac{315}{256(2^{t}+1)}+\frac{2^{2(r-1)^{2}}}{2^{x-r-1}}\cdot\frac{{t\brack r-1}}{{2r+t\brack r-1}},
	\end{equation}
	and further  get  that $\varphi(k-1,t,k-2,k-1,0)/\left({x\brack 1}{n-t-1\brack r-1} \right)$ is equal to
	$$\frac{\varphi(k-1,t,k-2,k-1,0)}{{x\brack 1}{n-t-1\brack r-2}}\cdot \frac{2^{r-1}-1}{2^{k+2}-1}\leq \frac{315}{512\cdot 2^{2(r-2)}}\cdot \frac{(2^{r}-1)(2^{r-1}-1)}{(2^{x}-1)(2^{k+2}-1)}.$$
	It follows from $(2^{k+2}-1)/{2r+t\brack r-1}=(2^{r-1}-1)/{2r+t\brack r-2}$, $(2^{k+2}-1)/(2^{x}-1)\leq 2^{r+2}/3$  and Lemma \ref{2604282} that
	\begin{equation}\label{2609212}
		B\leq \frac{105}{256\cdot 2^{x-r-1}}+\frac{2^{2r(r-2)-x+6}}{3}\cdot \frac{{t\brack r-2}}{{2r+t\brack r-2}}+\frac{2^{2(r-1)^{2}+r+2}}{3}\cdot \frac{{t\brack r-1}}{{2r+t\brack r-1}}.
	\end{equation}

	Suppose  $t\le r-3$. Then ${t\brack r-2}={t\brack r-1}=0$. By (\ref{2609211}) and (\ref{2609212}), we have $A+B\le105/128\leq 5/6$.  Suppose $t\ge r-2$. Applying Lemma \ref{2604282}, we have
	${t\brack s}/ {2r+t\brack s}\leq 1/2^{2rs}$ for any  $s\leq 2r+t$, even when $s>t$. This together with (\ref{2609211}) and (\ref{2609212}) yields
	$$
	A+B\leq
	\frac{1}{2^{x-r-1}}
	\left(
	\frac{315}{256(2^t+1)}+\frac{1}{2^{2r-2}}
	+\frac{105}{256}
	\right)+\frac{1}{3\cdot 2^{x-6}}+\frac{1}{3\cdot 2^{r-4}}
	.
	$$
	Indeed, the expression in parentheses is less than $1/2$ whenever $t\geq 4$.
	If $r\geq 6$, then $t\geq 4$, so
	$A+B<1/2+1/6+1/12\leq 5/6$.
	If $4\le r\le5$ and $t\ge r$, then $x=t+2\ge r+2$, giving
	$A+B<1/4+1/3+1/3\leq 11/12$.
	In all these cases, $A+B\leq 1-1/(3\cdot2^t)$, and the desired
	result follows from Lemma \ref{2608281}.
	The remaining cases $(r,t)=(4,2),(4,3),(5,3),(5,4)$ follow
	by direct calculation; see Table \ref{2609176} (vii)--(x).
\end{proof}

\begin{Lemma}\label{2609258}
	Let $t\geq1$, $k\geq t+2$ and $n\geq2k+1$. Set $r=k-t$. Then
	$$
	q{t\brack1}\Phi_1(n,k,t,t,t,r-2) +{n-t-1\brack r-1} +q{r\brack1}{r+1\brack1}{n-t-2\brack r-2}
	<m(n,k,t).
	$$
\end{Lemma}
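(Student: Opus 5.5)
The plan is the same normalization used in Lemmas \ref{2609052} and \ref{2609216}. We divide the left-hand side by ${x\brack1}{n-t-1\brack r-1}$ and show the quotient is at most $1-1/(3\cdot2^t)$. Lemma \ref{2608281} then gives the strict inequality. The case $r=2$ is handled first and separately. There $\Phi_1(n,k,t,t,t,0)=\varphi(k,t,t,k,0)={3\brack1}$, since only the $j=0$ summand survives because ${0\brack j}=0$ for $j\geq1$. So the left-hand side equals
\[
q{t\brack1}{3\brack1}+{n-t-1\brack1}+q{2\brack1}{3\brack1},
\]
which is a small multiple of ${n-t-1\brack1}$ plus constants. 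With $x\geq3$ and $n\geq 2t+5$, the same elementary estimate as in the $k=t+2$ case of Lemma \ref{2609052} should give a quotient below $5/6$.

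For $r\geq3$ we bound the three pieces separately.

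\emph{Second and third terms.} The second term gives at most $1/{x\brack1}\le (q-1)/(q^{x}-1)$. For the third term we use ${n-t-2\brack r-2}/{n-t-1\brack r-1}=(q^{r-1}-1)/(q^{n-t-1}-1)$. This gives
\[
\frac{q{r\brack1}{r+1\brack1}}{{x\brack1}}\cdot\frac{q^{r-1}-1}{q^{n-t-1}-1}\le \frac{q{r\brack1}(q^{r-1}-1)}{q^{n-t-1}-1},
\]
which is roughly $q^{2r-n+t}/(q-1)\le q^{-t-1}/(q-1)$ since $n\geq 2k+1$. Both pieces are therefore tiny: about $1/7$ and $1/8$ for $q=2$, and smaller for larger $q$ or $t$.

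\emph{The $\Phi_1$ term.} Note $\varphi(k,t,t,k,0)={r+1\brack1}^{r}$, again because only $j=0$ survives. The summands $\psi(k,t,t,i)={r-i+1\brack1}^{r-i}$ have ${t\brack t}=1$. So the main term is
\[
q{t\brack1}\Big({r+1\brack1}^r+\sum_{i=1}^{r-2}{r-i+1\brack1}^{r-i}{n-k+i\brack i}\Big).
\]
\begin{itemize}
\item For the $i=0$ term we argue as in Lemmas \ref{2609152}--\ref{2609153}. The quantity ${r+1\brack1}^{r-1}/{n-t-1\brack r-1}$ is at most about $(q-1)^{-(r-1)}q^{-(r-1)(n-2k+t-1)}$. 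Multiplying by $q{t\brack1}{r+1\brack1}/{x\brack1}\lesssim q^{t+r+2-x}/(q-1)$ leaves roughly $q^{-(r-1)t+O(1)}$, which is small since $r\ge3$.
\item For $1\le i\le r-2$ we use the identity behind \eqref{2609161} with ${k-i-1\brack t}$ replaced by $1$, so the gain over $\psi(k,t,k-i-1,i)$ is the factor $1/{k-i-1\brack t}\le q^{-t(r-i-1)}$. Combined with \eqref{2609163}, each term is at most about $q^{t+1}\cdot q^{-t(r-i-1)}\cdot q^{-(r-i-1)(i+n-2k-1)}/(q-1)^{r-1}$.
\item The worst summand is $i=r-2$. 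It equals $q{t\brack1}(q^3-1)^2/(q-1)^2\cdot{n-t-2\brack r-2}$, and relative to ${x\brack1}{n-t-1\brack r-1}$ it is about $q^{t+7}q^{r-1}/(q^{x}q^{n-t-1})\le q^{8+r-x-(n-2k)-k+t}$. Here I must check carefully that the exponent is negative enough, using $n-t-1\ge 2r+t$.
\item Summing the geometric series as in Lemma \ref{2608317}, this part should be below $1/4$.
\end{itemize}

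\emph{Main obstacle.} The difficult step is the small parameters, where these crude bounds may not close the gap to $1-1/(3\cdot2^t)$: $q=2$, $n=2k+1$, and small $t$ and $r$ (say $t\le2$, $r\le4$), where the $i=r-2$ term and the factor $q{t\brack1}$ are not negligible. My approach there is to use the sharper form $m(n,k,t)\ge a(n,k,t)$ or $h(n,k,t)$ directly, according to whether $k\le 2t+1$, via Lemma \ref{2609131}. For the finitely many residual tuples I would verify the inequality by direct computation, in the style of Table \ref{2609176}. Monotonicity in $n$ of the normalized quotient, argued as in the proof of Lemma \ref{2609216}, reduces everything to $n=2k+1$.
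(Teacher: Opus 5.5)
Your framework is the paper's: normalize by ${x\brack1}{n-t-1\brack r-1}$, bound the pieces, and finish with Lemma~\ref{2608281}. But this lemma is a pure numerical inequality, so the magnitudes are the proof, and several of yours are wrong.

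\textbf{The case $r=2$.} Here $\Phi_1(n,k,t,t,t,0)=\varphi(k,t,t,k,0)={r+1\brack1}^{r}={3\brack1}^2$, not ${3\brack1}$; the exponent $k-t-i-j$ equals $2$, and you use the correct formula yourself for $r\ge3$. The case still survives: the corrected quotient stays below $5/6$, with worst value $171/217$ at $(q,t,n)=(2,1,7)$. The paper does this case exactly via $a(n,t+2,t)-{n-t-1\brack1}=q{t+1\brack1}\bigl({n-t-1\brack1}-1\bigr)$.

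\textbf{The case $r\ge3$.} Here the problems are serious.
\begin{itemize}
\item The third term is not tiny. At $n=2k+1$ one has $q{r\brack1}(q^{r-1}-1)/(q^{n-t-1}-1)\approx q^{-t}/(q-1)$; you lost a factor $q$. At $(q,t,r,n)=(2,1,3,9)$ it equals $13230/40005\approx0.33$, and for $q=2$, $t=1$ it tends to $1/2$ as $r\to\infty$. It is the dominant piece of the left-hand side for every $r$, not a small-parameter effect.
\item The $\Phi_1$-part is not below $1/4$: at $(2,1,3,9)$ it is $12924/40005\approx0.32$.
\item Your bound for the $i=r-2$ summand simplifies to $q^{8+r-x-(n-2k)-k+t}=q^{8-x-(n-2k)}\le q^{7-x}$. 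This is at least $1$ whenever $x\le7$ and $n=2k+1$, for every $q$. So it proves nothing, and it does not cut the problem down to finitely many tuples for a direct check.
\item \eqref{2609163} is only established for $1\le i\le r-3$ with $k\ge t+4$, so it is not available for $i=r-2$ or for $r=3$.
\end{itemize}
Your fallback to direct computation on residual tuples is therefore not yet justified.

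\textbf{What is needed.} The dominant part must be estimated sharply, and after that the remaining room is small. The paper groups the $i=r-2$ summand $q{t\brack1}{3\brack1}^2{n-t-2\brack r-2}$ with the third term, since both carry ${n-t-2\brack r-2}$. Their joint quotient by ${r+1\brack1}{n-t-1\brack r-1}$ is
\[
q\Bigl({t\brack1}{3\brack1}^2/{r+1\brack1}+{r\brack1}\Bigr)\frac{q^{r-1}-1}{q^{n-t-1}-1}\le \frac{1}{q}+\frac{1}{(q+1)q^{r+1}}\le \frac{25}{48}.
\]
The other pieces are bounded as follows.
\begin{itemize}
\item The $i=0$ term is at most $3/16$, by Lemma~\ref{2609152} and ${t\brack1}\le q^{(t-1)(r-1)}$.
\item The terms $1\le i\le r-3$ are at most $\frac{2}{7}\cdot\frac{3}{16}=\frac{3}{56}$. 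This uses $q{t\brack1}/{k-i-1\brack t}\le q{t\brack1}/{t+2\brack2}\le 2/7$ against $\psi(k,t,k-i-1,i)$, together with Lemma~\ref{2608317}.
\item The second term is at most $1/15$.
\end{itemize}
The total is $1392/1680$, just below $5/6=1400/1680$. So the uniform argument closes with no direct computation, but only because each piece is bounded to this precision. Your budget of roughly $\frac{1}{4}+\frac{1}{7}+\frac{1}{8}$ misplaces where the difficulty lies.
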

\begin{proof}
	
	Suppose first  $r=2$.
	Then ${n-t-1\brack1}\geq q^{2}{3\brack1}+1$,  and  $a(n,t+2,t)-{n-t-1\brack 1}$   is equal to 
	$$q{t+1\brack 1}\left( {n-t-1\brack 1}-1\right)\geq q^{3}{t+1\brack 1}{3\brack 1}=q{3\brack 1}\left(q^{3}{t\brack 1}+q^{2}\right)> q{t\brack 1}{3\brack 1}^{2}+q{2\brack 1}{3\brack 1}.$$
	It follows from $\Phi_1(n,k,t,t,t,0)={3\brack 1}^{2}$ that the desired result holds.

	Suppose henceforth that $r\geq3$.    By Lemma \ref{2609152} and ${t\brack 1}\leq q^{(t-1)(r-1)}$, we have 
	$$\frac{q{t\brack 1}{r+1\brack 1}^{r-1}}{{n-t-1\brack r-1}}\leq \frac{(q+1)(q^{t}-1)}{(q-1)^{r-2}q^{t(r-1)+2}}\leq \frac{q+1}{(q-1)^{r-3}q^{r+1}}\leq \frac{3}{16}.$$
	For each $1\leq i\leq r-3$, by ${t+2\brack 2}\geq {3\brack 1}{t\brack 1}$,  we know $q{t\brack 1}\psi(k,t,t,i)/\psi(k,t,k-i-1,i)$ is equal to $q{t\brack 1}/{k-i-1\brack t}$, and it is at most $q{t\brack 1}/{t+2\brack 2}\leq q/{3\brack 1}\leq 2/7$. 
	This together with Lemma \ref{2608317} shows that $\sum_{i=1}^{r-3}q{t\brack 1}\psi(k,t,t,i){n-k+i\brack i}$, divided by ${x\brack 1}{n-t-1\brack r-1}$, is at most $3/56$. Note that
	$$\frac{{t\brack 1}{3\brack 1}^{2}}{{r+1\brack 1}}+{r\brack 1}=\frac{q^{t-1}{3\brack 1}^{2}}{{r+1\brack 1}}+ \frac{{t-1\brack 1}{3\brack 1}^{2}}{{r+1\brack 1}}+{r\brack 1}\leq \frac{q^{t-1}{3\brack 1}^{2}}{{r+1\brack 1}}+q^{t-1}{r\brack 1}, $$
	and further ${3\brack 1}^{2}/ {4\brack 1}+ {r\brack 1}=q^{2}/\left((q+1)(q^{2}+1)\right)+q+1+{r\brack 1}\leq1/(q+1)+q(q^{r-1}+1)$. Then the quotient of $q{t\brack 1}\psi(k,t,t,r-2){n-t-2\brack r-2}+q{r\brack 1}{r+1\brack 1}{n-t-2\brack r-2}$ by ${r+1\brack 1}{n-t-1\brack r-1}$  equals 
	$$q\left(\frac{{t\brack 1}{3\brack 1}^{2}}{{r+1\brack 1}}+{r\brack 1}\right)\cdot\frac{q^{r-1}-1}{q^{n-t-1}-1}\leq \frac{q^{t}(q^{r-1}-1)}{(q+1)(q^{r+k}-1)}+\frac{q^{t+1}(q^{2r-2}-1)}{q^{r+k}-1}\leq \frac{1}{(q+1)q^{r+1}}+\frac{1}{q},$$
	and it is at most $25/48$. Since $1/{r+1\brack1}\leq1/15$, adding the four bounds
	shows that the LHS of the claimed inequality,
	divided by ${x\brack1}{n-t-1\brack r-1}$, is at most
	$3/16+3/56+25/48+1/15\leq5/6$.
	Lemma~\ref{2608281} completes the proof.
\end{proof}

\begin{Lemma}\label{2609257}
	Let  $k\geq t+2$ and $n\geq2k+1$. Set $r=k-t$. Then
	$$
	\begin{aligned}
		{2\brack1}{n-t-1\brack r-1} +\left({r+1\brack1}-{2\brack1}+q^2{t\brack1}\right) {r+1\brack1}{n-t-2\brack r-2}<m(n,k,t).
	\end{aligned}
	$$
	Moreover, for $3\leq s\leq r$,
	$$
	{s\brack1}{n-t-1\brack r-1} +\left({r+1\brack1}-{s\brack1}\right){r+1\brack1}{n-t-2\brack r-2} +q^{r+1}{t\brack1}{n-t-s\brack r-s+1} <m(n,k,t).
	$$
\end{Lemma}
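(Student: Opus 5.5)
We plan to follow the pattern of Lemmas \ref{2609052} and \ref{2609258}. We divide the left-hand side by ${x\brack1}{n-t-1\brack r-1}$ and show that the quotient is at most $1-1/(3\cdot 2^t)$. Lemma \ref{2608281} then gives the strict inequality with $m(n,k,t)$. The key ratio throughout is
$$\frac{{r+1\brack1}{n-t-2\brack r-2}}{{n-t-1\brack r-1}}=\frac{q^{r-1}-1}{q^{n-t-1}-1}\cdot{r+1\brack1}\cdot\frac{1}{{r-1\brack 1}}\cdot{r-1\brack1}\le \frac{q^{r+1}-1}{(q-1)q^{n-t-r}},$$
which by $n\ge 2k+1$ is roughly $q^{-(t+1)}\cdot q/(q-1)$. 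We will also use the exact identity $q\,{r\brack1}\,(q^{r-1}-1)/(q^{n-t-1}-1)\le q^{2r-1-(n-t-1)}\le q^{-t}$, as in the proof of Lemma \ref{2609258}.

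For the first inequality, we normalize by ${r+1\brack1}{n-t-1\brack r-1}$ when $r+1\ge t+2$, and by ${t+2\brack1}{n-t-1\brack r-1}$ otherwise; in both cases this is at most ${x\brack1}{n-t-1\brack r-1}$. The first term contributes ${2\brack1}/{x\brack1}\le (q^2-1)/(q^{x}-1)\le 1/(q^2+1)$ or less, since $x\ge3$. The second term contributes
$$\Bigl({r+1\brack1}-{2\brack1}+q^2{t\brack1}\Bigr)\frac{q^{r-1}-1}{q^{n-t-1}-1}\cdot\frac{{r+1\brack 1}}{{x\brack1}}.$$
Here ${r+1\brack1}(q^{r-1}-1)/(q^{n-t-1}-1)\lesssim q^{2r-1-(n-t-1)}/(q-1)\le q^{-t-1}\cdot q/(q-1)$, and $q^2{t\brack1}(q^{r-1}-1)/(q^{n-t-1}-1)\le q^{t+r+1-(n-t-1)}/(q-1)\le q^{-r}/(q-1)$. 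We expect the total to be comfortably below $5/6$ except for $q=2$ with small $r,t$. The case $r=2$, i.e. $k=t+2$, should be checked separately and exactly, as in Lemma \ref{2609052}. There the quantity becomes ${2\brack1}{n-t-1\brack1}+(q^2{t\brack1}+q^2+q){3\brack1}$, which is compared with $a(n,t+2,t)={t+2\brack1}{n-t-1\brack1}-q{t+1\brack1}$ using $n-t-1\ge t+4$.

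For the second inequality, with $3\le s\le r$, we write ${s\brack1}{n-t-1\brack r-1}+({r+1\brack1}-{s\brack1}){r+1\brack1}{n-t-2\brack r-2}$. Since the ratio above is less than $1$, this is at most ${r+1\brack1}{n-t-1\brack r-1}$ times $\bigl({s\brack1}+({r+1\brack1}-{s\brack1})\rho\bigr)/{r+1\brack1}$ with $\rho\le q^{-t}\cdot q/(q-1)$ small. This quantity is increasing in $s$, but the third term decreases in $s$. The main obstacle is the third term $q^{r+1}{t\brack1}{n-t-s\brack r-s+1}$ at $s=3$, where it is largest. We bound its ratio to ${n-t-1\brack r-1}$ by $\prod$-type estimates via Lemma \ref{2604282}: ${n-t-s\brack r-s+1}/{n-t-1\brack r-1}\le q^{-(s-2)(n-t-r)}\cdot c_q$. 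With the prefactor $q^{r+1+t}$ and $n-t-r\ge r+1+... \ge k+1-t$, this gives roughly $q^{r+1+t-(s-2)(k+1)}\le q^{-1}$ at $s=3$, which is too weak on its own. We will therefore balance it against the first two terms at $s=3$. These first two terms are then only about ${3\brack1}/{r+1\brack1}+\rho$, which is small when $r\ge 3$. In that regime the third term, divided by ${r+1\brack1}{n-t-1\brack r-1}$, is about $q^{t+1-(n-t-1)+ \ldots}$; a careful count gives at most about $q^{2t+r+1-(n-t-1)}\cdot q^{-(r-2)}$, which is small.

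The delicate point will be the precise exponent bookkeeping for small $q=2$, $r=3$, $s=3$ and $n=2k+1$. We expect to need either a sharper exact computation of $q^{r+1}{t\brack1}{n-t-3\brack r-2}/\bigl({r+1\brack 1}{n-t-1\brack r-1}\bigr)=q^{r+1}{t\brack1}{r-1\brack1}(q^{r-1}-1)\ldots$, via the closed-form ratio $\frac{(q^{r-1}-1)(q^{r-2+\ldots})}{(q^{n-t-1}-1)(q^{n-t-2}-1)}$, or a small finite check of the $q=2$ cases analogous to Table \ref{2609176}.
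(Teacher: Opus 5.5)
Your framework matches the paper's: normalize and compare with Lemma~\ref{2608281}. However, the proposal never closes the estimates, and the ones you write down contain slips or are too weak.

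In the first inequality, since $n-t-1\ge 2r+t$, the estimate you sketch for $q^2{t\brack1}(q^{r-1}-1)/(q^{n-t-1}-1)$ yields $q^{t+r+1-(n-t-1)}/(q-1)\le q^{1-r}/(q-1)$, not $q^{-r}/(q-1)$. With this corrected, your term-by-term bounds at $(n,k,t,q)=(9,4,1,2)$ give $\frac15+\frac{12}{15}\cdot\frac12+\frac14=\frac{17}{20}>\frac56$. So bounding ${r+1\brack1}-{2\brack1}$ and $q^2{t\brack1}$ separately fails exactly in the range you leave as unspecified exceptions. Also, ${2\brack1}/{x\brack1}\le 1/(q^2+1)$ needs $x\ge4$, not $x\ge3$. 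The missing idea is to combine the two pieces. Use ${r+1\brack1}-{2\brack1}=q^2{r-1\brack1}$ and $q^2\bigl({r-1\brack1}+{t\brack1}\bigr)\le q^k$. Then, after dividing by ${r+1\brack1}{n-t-1\brack r-1}$, the second term is at most $q^k(q^{r-1}-1)/(q^{k+r}-1)\le{r-1\brack1}/{r\brack1}$. Adding $\frac{q+1}{{r+1\brack1}}\le\frac{q+1}{q{r\brack1}}$ gives exactly $\frac1q+\frac{1}{{r\brack1}}\le\frac56$ for every $r\ge2$, so $r=2$ needs no separate treatment.

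In the second inequality, the obstacle you describe comes from loose bookkeeping. The bound you finally settle on is wrong in a way no finite check can repair. At $n=2k+1$, $q^{2t+r+1-(n-t-1)}\cdot q^{-(r-2)}$ equals $q^{t+3-2r}$, which is at least $1$ whenever $t\ge 2r-3$, and $t$ is unbounded. Likewise, $\rho\le q^{-t}\cdot q/(q-1)$ is useless at $q=2$, $t=1$; the correct bound, which you stated earlier, is $\rho\le 1/((q-1)q^t)$. No balancing across $s$ is required:
\begin{itemize}
\item Since ${s\brack1}/{r+1\brack1}\le1/q$ for $s\le r$, the first two terms contribute at most $(1-\rho)/q+\rho\le\frac1q+\frac1{q^2}\le\frac34$.
\item For the third term, use ${n-t-s\brack r-s+1}\le{n-t-3\brack r-2}\le q^{-(r-2)}{n-t-2\brack r-2}$. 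This bounds it by $\frac{q^3{t\brack1}}{{r+1\brack1}^2}\,\rho$. The factor $q^{-t}$ in $\rho$ absorbs ${t\brack1}$, leaving $\frac{q^3}{(q-1)^2{r+1\brack1}^2}\le\frac1{18}$ because $r\ge3$.
\end{itemize}
The total is at most $\frac{29}{36}<\frac56$, and Lemma~\ref{2608281} then gives the claim uniformly in all parameters.
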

\begin{proof}

	The quotient of the LHS of the first inequality by ${r+1\brack 1}{n-t-1\brack r-1}$ is equal to 
	\begin{equation*}
		\frac{{q+1}}{{r+1\brack 1}}+ \frac{q^{2}\left({r-1\brack 1}+{t\brack 1}\right)(q^{r-1}-1)}{q^{n-t-1}-1}\leq \frac{{q+1}}{{r+1\brack 1}}+ \frac{q^{k}(q^{r-1}-1)}{q^{k+r}-1}\leq \frac{q+1}{q{r\brack 1}}+ \frac{{r-1\brack 1}}{{r\brack 1}}=\frac1q+\frac1{{r\brack1}}
		\leq\frac{5}{6}, 
	\end{equation*}
	where we used ${r-1\brack 1}+{t\brack 1}\leq 2q^{k-3}\leq q^{k-2}$.  Then  the first inequality holds from Lemma \ref{2608281}.
	
	Let $3\leq s\leq r$.
	By Lemma \ref{2604282}, we have  ${s\brack 1}/{r+1\brack 1}\leq 1/ q$ and 
	\begin{equation}\label{2609195}
		\frac{ {r+1\brack 1}{n-t-2\brack r-2}}{{n-t-1\brack r-1}}\leq \frac{1}{(q-1)q^{t}}\leq \frac{1}{(q-1)q}.
	\end{equation}
	Then
	the first two terms in the second inequality, divided by
	${r+1\brack1}{n-t-1\brack r-1}$, have sum at most 
	$$\frac{{s\brack 1}}{{r+1\brack 1}}+\frac{1}{q(q-1)}\cdot\left(1-\frac{{s\brack 1}}{{r+1\brack 1}}\right)=\left(1-\frac{1}{q(q-1)}\right)\frac{{s\brack 1}}{{r+1\brack 1}}+\frac{1}{q(q-1)}\leq\frac{1}{q}+\frac{1}{q^{2}}\leq \frac{3}{4}, $$
	where the first inequality follows by bounding ${s\brack1}/{r+1\brack1}$ by $1/q$
	and simplifying the result to $1/q+1/q^2$. 
	Moreover, by Lemma \ref{2604282}, one can check that ${n-t-s\brack r-s+1}\leq{n-t-3\brack r-2}\leq {n-t-2\brack r-2}/q^{r-2}$.
	It follows from (\ref{2609195}) that the remaining term  divided by ${r+1\brack1}{n-t-1\brack r-1}$  is at most
	$$
	\frac{q^3{t\brack1}}{{r+1\brack1}^2}\cdot \frac{{r+1\brack1}{n-t-2\brack r-2}}{{n-t-1\brack r-1}} \leq \frac{q^3}{(q-1)^2{r+1\brack1}^2} \leq \frac1{q(q-1)^2(q+1)^2} \leq\frac1{18}.
	$$
	In summary, the whole quotient is at most $29/36$, so
	Lemma~\ref{2608281} completes the proof.
\end{proof}

\begin{Lemma}
	Let $q\geq2$ be a prime power. For integers $i\geq0$ and $j\geq3$, we have
	\begin{equation}\label{2609191}
		\frac{q^{2j}{i+1\brack1}{i+2\brack1}}{{i+j+1\brack1}^2{j\brack1}} +\frac{{i+2\brack1}{i+j\brack j-1}{i+j+1\brack1}^{j-2}}{q^{i(2j-1)+j(j-1)}{j\brack1}} \leq 1.
	\end{equation}
	Moreover, if $r\geq3$ and $2\leq j\leq r$, then
	\begin{equation}\label{2609192}
		\frac{q^{(j-1)r+j+1}{r\brack j}}{{r+1\brack1}{2r\brack j-1}} +\frac{{r+1\brack j}{r\brack j-1}{r+1\brack1}^{j-2}}{q^{j(r-j+1)-1}{2r\brack j-1}} \leq 1.
	\end{equation}
\end{Lemma}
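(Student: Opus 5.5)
The plan is to bound each of the two summands in \eqref{2609191} and \eqref{2609192} separately, using Lemma \ref{2604282} and the factorisation ${a\brack 1}=(q^a-1)/(q-1)$, and to show that the first summand is at most a fixed fraction (roughly $1-1/q$ or so) while the second is small enough to make up the remainder.

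For \eqref{2609191}, first rewrite the first term exactly. We have
$$\frac{q^{2j}{i+1\brack1}{i+2\brack1}}{{i+j+1\brack1}^2{j\brack1}}=\frac{q^{2j}(q^{i+1}-1)(q^{i+2}-1)(q-1)}{(q^{i+j+1}-1)^2(q^j-1)}.$$
Since $q^{2j}(q^{i+1}-1)(q^{i+2}-1)\le q^{2i+2j+3}$ and $(q^{i+j+1}-1)^2\ge q^{2i+2j+2}(1-2q^{-i-j-1})$, this is at most
$$\frac{q(q-1)}{(q^j-1)(1-2q^{-i-j-1})}\le \frac{q(q-1)}{q^j-1}\Bigl(1+\frac{4}{q^{j+1}}\Bigr),$$
which for $j\ge3$ is at most about $q^{2-j}$, hence at most $1/q\le 1/2$ roughly. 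For the second term, use ${i+j\brack j-1}<q^{(j-1)(i+1)}/(q-1)^{j-1}$ and ${i+j+1\brack 1}<q^{i+j+1}/(q-1)$. The exponent of $q$ in the numerator is then about
$$(i+2)+(j-1)(i+1)+(j-2)(i+j+1)=i(2j-2)+j^2+j-3,$$
against $i(2j-1)+j(j-1)+j$ in the denominator. This leaves $q^{-i+j-3}/(q-1)^{2j-2}$ up to constants. When $q\ge3$ this is tiny. When $q=2$ and $i$ is small relative to $j$ the bound is not obviously small, so I would instead keep the exact factors $\prod(1-q^{-m})$ and compare directly, possibly treating small $i$ separately.

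This is the step I expect to be the main obstacle: the crude bounds in Lemma \ref{2604282} lose factors of $(q-1)^{-1}$ that are fatal at $q=2$. So at $q=2$ I will write ${i+j\brack j-1}{i+j+1\brack1}^{j-2}$ exactly and check that it is at most $2^{(j-1)(i+1)+(j-2)(i+j+1)}$ times a bounded product. Then the sum with the first term, which is at most $2^{2-j}(1+2^{-j+1})$, can be verified to be at most $1$, handling $j=3$ by explicit computation in $i$.

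For \eqref{2609192}, the approach is the same. The first term is
$$q^{(j-1)r+j+1}{r\brack j}\Big/\Bigl({r+1\brack1}{2r\brack j-1}\Bigr).$$
Use ${r\brack j}\le q^{j(r-j)}\prod_{m\le j}(1-q^{-m})^{-1}$ and ${2r\brack j-1}\ge q^{(j-1)(2r-j+1)}$, together with ${r+1\brack 1}\ge q^r$. The net exponent is then
$$(j-1)r+j+1+j(r-j)-r-(j-1)(2r-j+1)=-r+2,$$
so the first term is about $q^{2-r}$ times a constant at most $\prod_m(1-q^{-m})^{-1}\le 3.5$. This is small unless $r=3$. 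For the second term, ${r+1\brack j}{r\brack j-1}{r+1\brack1}^{j-2}$ has exponent $j(r+1-j)+(j-1)(r-j+1)+(j-2)r$. Subtracting $j(r-j+1)-1+(j-1)(2r-j+1)$ leaves $(j-2)r+1-r(j-1)=1-r$, again up to $(q-1)$-type constants. Hence both terms are $O(q^{2-r})$. The case $r=3$ (with $j=2,3$) and $q=2$ with small $r$ will be checked by direct substitution, and for the general case I will keep the exact products so that the total is at most $1$.
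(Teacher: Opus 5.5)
Your proposal has a genuine gap, most seriously in \eqref{2609192}.

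The exponent count for its first term is wrong: $(j-1)r+j+1+j(r-j)-r-(j-1)(2r-j+1)=2-j$, not $2-r$ (at $j=2$ it is $0$). Hence your crude bound for that term is $q^{2-j}\prod_{m\le j}(1-q^{-m})^{-1}$. This exceeds $1$ at $j=2$ for every $q$, and equals $32/21$ at $(j,q)=(3,2)$, independently of $r$. The problem is not merely lost constants. At $j=2$ the first term is exactly $\frac{q^{r+3}{r-1\brack1}}{{2\brack1}{r+1\brack1}(q^r+1)}$, which tends to $q/(q+1)$ as $r\to\infty$. Nor is the second term uniformly $O(q^{1-r})$, since ${r+1\brack1}^{j-2}$ hides a factor of roughly $(q/(q-1))^{j-2}$. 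At $q=2$, $j=r$, it approaches about $0.58$ as $r\to\infty$.

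So the nearly tight cases form infinite families: $j=2$ for all $q$ and $r$, and $q=2$ with $j$ close to $r$. Substituting $r=3$ and a few small $r$ cannot finish the proof. The missing idea is the one in the paper:
\begin{itemize}
\item For $j=2$, simplify exactly via ${r\brack2}={r\brack1}{r-1\brack1}/{2\brack1}$ and ${2r\brack1}={r\brack1}(q^r+1)$. This gives the strict bound $\frac{q}{q+1}+\frac{q}{q^{r-3}(q+1)(q-1)^2}$, with $(q,r)=(2,3)$ computed directly.
\item For $j\ge3$, the left side of \eqref{2609192} is exactly the left side of \eqref{2609191} at $i=r-j$, multiplied by $q^{(j-1)(r-1)}{r+1\brack j-1}/{2r\brack j-1}=\prod_{\ell=0}^{j-2}\bigl(1-\frac{q^{r-1}-1}{q^{2r-\ell}-1}\bigr)\le1$. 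So \eqref{2609192} follows from \eqref{2609191} rather than needing a separate estimate.
\end{itemize}

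For \eqref{2609191}, the case you identify as the obstacle is left undone, and the estimates you do state do not close it. The bound ${i+j\brack j-1}<q^{(j-1)(i+1)}/(q-1)^{j-1}$ is false: Lemma~\ref{2604282} gives ${i+j\brack j-1}>q^{(j-1)(i+1)}$, and the valid upper bound has exponent $(j-1)(i+2)$. The correct crude bound on the second term is therefore $q^{j-1-i}/(q-1)^{2j-2}$. This suffices for $q\ge3$, though it is $(3/4)^{j-1}$ at $q=3$, $i=0$, not tiny. It is useless for $q=2$.

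At $q=2$, $i=0$, the second term equals $\frac34(1-2^{-j-1})^{j-2}$ for every $j$. Bounding ${j\brack j-1}$ by $2^{j-1}$ times the generic constant $\prod_{m\ge1}(1-2^{-m})^{-1}\approx3.46$ already gives about $1.3$. At $(i,j)=(0,3)$, your own first-term bound $5/14$ plus the true second term $45/64$ exceeds $1$.

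The paper gets around this with exact cancellations rather than sharper constants:
\begin{itemize}
\item At $i=0$ it uses ${1\brack1}=1$ and ${j\brack j-1}={j\brack1}$ to get $\frac{q}{(q+1)^2}+\frac{q+1}{q^2}\le\frac29+\frac34$.
\item For $i\ge1$ it bounds the first term by $q/{j\brack1}\le\frac27$. It then writes ${i+j\brack j-1}/{j\brack1}=\prod_{\ell=1}^{i}\frac{q^{i+j-\ell+1}-1}{q^{i-\ell+2}-1}$, keeps the $\ell=i$ factor $\frac{q^{j+1}-1}{q^2-1}$ exact, and bounds the other factors by $q^j/(q-1)$. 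This makes the second term at most $\frac{q}{q^2-1}\le\frac23$.
\end{itemize}
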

\begin{proof}
	If $i=0$, then by   ${j+1\brack 1}\geq q^{j-1}(q+1)$ and ${j\brack 1}\geq q(q+1)$, we have
	$$\frac{q^{2j}{2\brack 1}}{{j+1\brack 1}^{2}{j\brack 1}}+\frac{{2\brack 1}{j+1\brack 1}^{j-2}}{q^{j(j-1)}}\leq \frac{q}{(q+1)^{2}}+\frac{q+1}{q^{2}}\leq \frac{2}{9}+\frac{3}{4}\leq 1.$$
	Suppose $i\geq 1$.  By Lemma \ref{2604282}, the  first term on LHS of  (\ref{2609191}) is at most $q/{j\brack 1}\leq 2/7$, and   
	\begin{equation*}
		\frac{{i+2\brack1}{i+j\brack j-1}{i+j+1\brack1}^{j-2}}{q^{i(2j-1)+j(j-1)}{j\brack1}}= \frac{(q^{i+2}-1)(q^{i+j+1}-1)^{j-2}}{(q-1)^{j-1}q^{i(2j-1)+j(j-1)}}\prod_{\ell=1}^{i} \frac{q^{i+j-\ell+1}-1}{q^{i-\ell+2}-1}\leq \frac{1}{q^{ij}}\cdot\frac{q^{j+1}-1}{q^{2}-1}\cdot
		\frac{q^{j(i-1)}}{(q-1)^{i-1}}.
	\end{equation*}
	It follows from $q/(q^{2}-1)\leq 2/3$ that the LHS of (\ref{2609191}) is at most $2/7+2/3\leq 1$, as desired.
	
	Next, we show the second inequality.  
	Suppose $j=2$. The LHS of (\ref{2609192}) is equal to
	$$
	\frac{q^{r+3}{r-1\brack1}}{{2\brack1}{r+1\brack1}(q^r+1)} +\frac{{r+1\brack1}{r\brack1}}{q^{2r-3}{2\brack1}(q^r+1)} <\frac{q}{q+1} +\frac{q}{q^{r-3}(q+1)(q-1)^2}, 
	$$
	where the last bound is at most $1$ if $r\geq 4$, or $r=3$ and $q\geq3$.
	For $(q,r)=(2,3)$, the LHS of (\ref{2609192}) is $1037/1080\leq 1$. Hence (\ref{2609192}) holds.
	Suppose  $j\geq 3$.  By the identities 
	$$
	\frac{{r-j+1\brack1}{r-j+2\brack1}}{{r+1\brack1}{j\brack1}}=\frac{{r\brack j}}{{r+1\brack j-1}}, \qquad \frac{{r-j+2\brack1}}{{j\brack1}}= \frac{{r+1\brack j}}{{r+1\brack j-1}}, $$
	the LHS of \eqref{2609192} is obtained
	by setting $i=r-j$ into the LHS of
	\eqref{2609191} and multiplying by
	$$
	q^{(j-1)(r-1)}
	\frac{{r+1\brack j-1}}{{2r\brack j-1}}
	=
	\prod_{\ell=0}^{j-2}
	\left(1-\frac{q^{r-1}-1}{q^{2r-\ell}-1}\right)\leq 1.
	$$
	Thus the second inequality holds for $j\geq 3$. This completes the proof.
\end{proof}

\begin{Lemma}\label{2609193}
	Let $k\geq t+3$ and $n\geq2k+1$. Set $r=k-t$. Then
	$$
	q^{r(r+1)}{k-1\brack t-1} +\sum_{i=0}^{t-1}q^{2i(i+1)}{t-1\brack i}{r+1\brack i+1}{r\brack i+1}{r+1\brack1}^{r-i-1} <m(n,k,t).
	$$
\end{Lemma}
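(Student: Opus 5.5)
The inequality will follow the pattern of the other appendix estimates. I will divide the left-hand side by ${x\brack1}{n-t-1\brack r-1}$, where $x=\max\{t+2,r+1\}$, and show the resulting quotient is at most $1-1/(3\cdot2^t)$. Lemma \ref{2608281} then gives the strict inequality. Both terms on the left are independent of $n$, while ${n-t-1\brack r-1}$ increases with $n$. So it suffices to take $n=2k+1$, where ${n-t-1\brack r-1}={k+r\brack r-1}$.

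\emph{First term.} Write $q^{r(r+1)}{k-1\brack t-1}=q^{r(r+1)}{k-1\brack r}$. By Lemma \ref{2604282},
\[
\frac{{k-1\brack r}}{{k+r\brack r-1}}
\]
is roughly $q^{r(t-1)-(r-1)(k+1)}$ up to factors $(q-1)^{-r}$. Multiplying by $q^{r(r+1)}$ and dividing by ${x\brack1}\ge q^{r}$ should leave a small power of $q$. The cleaner route is to use the identity $q^{r(r+1)}{k-1\brack r}/\bigl({r+1\brack1}{2r\brack r-1}\bigr)$ and compare it directly with the first summand of \eqref{2609192} taken at $j=r$. Indeed, \eqref{2609192} at $j=r$ reads
\[
\frac{q^{(r-1)r+r+1}}{{r+1\brack1}{2r\brack r-1}}+\dots\le1,
\]
which is exactly this kind of expression after ${k-1\brack r}$ is bounded against ${2r\brack r}$-type ratios.

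\emph{Second term.} This is the sum over $i$. I will reindex it with $j=i+1$ and bound each summand
\[
q^{2i(i+1)}{t-1\brack i}{r+1\brack i+1}{r\brack i+1}{r+1\brack1}^{r-i-1}
\]
relative to ${r+1\brack1}{k+r\brack r-1}$. The idea is that ${t-1\brack i}/{k+r\brack r-1}$ decays geometrically in $t$, so the sum is dominated by a few terms. For these I will use \eqref{2609191} and \eqref{2609192}, which were evidently prepared for this purpose: the second summands there have the shape ${r+1\brack j}{r\brack j-1}{r+1\brack1}^{j-2}$, matching our summands with $j=r-i$ or $j=i+1$.

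So the plan has three steps.
\begin{enumerate}
\item[(a)] Reduce to $n=2k+1$ and $x\ge r+1$.
\item[(b)] Show that the whole left side over ${r+1\brack1}{2k-t\brack r-1}$ is bounded by a telescoping or geometric combination of the left sides of \eqref{2609191} and \eqref{2609192}, with ratios like ${t-1\brack i}/{k+r\brack r-1}$ absorbed via $q^{-(\cdot)}$.
\item[(c)] Conclude that the quotient is at most $5/6$, or at most $1-1/(3\cdot2^t)$, and invoke Lemma \ref{2608281}.
\end{enumerate}

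The main obstacle is step (b), namely matching the summation index $i\in[0,t-1]$ with the parameter $j$ in \eqref{2609191} and \eqref{2609192}. Those inequalities bound only two terms at a time, so I will try induction on $t$. Passing from $t-1$ to $t$ multiplies ${k-1\brack t-1}$ and ${t-1\brack i}$ by controlled factors and adds one new summand. Inequality \eqref{2609191}, with $i$ playing the role of $t-1$ and $j=r$, should show that the increment in the left side is at most the increment of ${x\brack1}{n-t-1\brack r-1}$ times the target constant. The base case $t=1$ is the sum with only $i=0$, which \eqref{2609192} handles directly. Any small cases with $q=2$ that remain will be checked numerically.
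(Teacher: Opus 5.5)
\textbf{There is a genuine gap, and it breaks the whole strategy, not just the unspecified step (b).} Dividing by ${x\brack1}{n-t-1\brack r-1}$ and invoking Lemma~\ref{2608281} cannot prove this statement.

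When $k\le 2t+1$ you have $x=t+2$. At $n=2k+1$, both $q^{r(r+1)}{k-1\brack r}$ and ${t+2\brack1}{2k-t\brack r-1}$ are polynomials in $q$ of degree $rk$ with leading coefficient $1$. So the quotient of the first term alone tends to $1$ as $q\to\infty$. Concretely, for $t=2$, $k=5$, $n=11$ this quotient is
\[
\frac{q^{12}(q^2-1)(q-1)}{(q^8-1)(q^7-1)}\ >\ \Bigl(1-\frac1q\Bigr)\Bigl(1-\frac1{q^2}\Bigr)\ >\ \frac{11}{12}=1-\frac{1}{3\cdot 2^{2}}\qquad (q\ge 13).
\]
Hence no estimate in (b) can give the bound you need in (c).

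The statement still holds there, for the following reason. By Lemma~\ref{2609131} and the proof of Lemma~\ref{2608281}, $m(n,k,t)=a(n,k,t)$ lies within a relative error $q^{-(n-k)}$ of ${t+2\brack1}{n-t-1\brack r-1}$. The left side falls short of that quantity only by a relative amount of roughly $1/q$. A $q$-independent threshold cannot detect this difference.

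Separately, step (b) normalizes by ${r+1\brack1}$ instead of ${x\brack1}$. Your own exponent count then gives order $q^{t+1-r}$ for the first term, which is large when $t\ge r$. For example, with $q=2$, $t=5$, $k=8$ one has $2^{12}{7\brack4}=48377856>10470405={4\brack1}{11\brack2}$.

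\textbf{The missing idea} is an exact, summand-by-summand comparison with explicit lower bounds for $h$ or $a$. This works after a Vandermonde expansion that exposes a common factor ${t-1\brack i}$.

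By Lemma~\ref{2609151}, $q^{r(r+1)}{k-1\brack t-1}=\sum_i A_i$ with $A_i=q^{r(r+1)+i^2}{t-1\brack i}{r\brack i}$. Let $B_i$ denote the $i$-th summand of the second term.

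\emph{Case $k\ge 2t+2$.} Use
\[
h(n,k,t)>q^{r(r-1)}{r+1\brack1}{k\brack r-1}=\sum_i q^{r(r-1)+i(i+2)}{r+1\brack1}{t-1\brack i}{r+1\brack r-i-1}.
\]
After cancelling ${t-1\brack i}$, the ratio of $A_i+B_i$ to the $i$-th summand is exactly the left side of \eqref{2609191} with $j=r-i\ge 3$.

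\emph{Case $k\le 2t+1$.} Use $a(n,k,t)>q^{r-1}{t+2\brack1}{t+2r-1\brack r-1}=\sum_i C_i$, expanded with $(t-1,2r,r-1)$. Then \eqref{2609192} with $j=r-i$ gives $A_i+B_i\le \frac{{r+1\brack1}}{{t+2\brack1}}C_i$ for $i\le r-2$. The leftover $A_{r-1}+A_r+B_{r-1}$ is absorbed by $C_{r-1}$ plus the slack $\bigl(1-{r+1\brack1}/{t+2\brack1}\bigr)C_{r-2}$.

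This cancellation is why \eqref{2609191} and \eqref{2609192} do not involve $t$ at all, so no induction on $t$ is needed. Your proposed pairing, with $i$ playing the role of $t-1$ and $j=r$, does not match how these inequalities can be used. Likewise, your base case $t=1$ lies in the range $k\ge 2t+2$ and is \eqref{2609191} with $i=0$, not \eqref{2609192}.
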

\begin{proof}
	
	Taking $(a,b,c)=(r,t-1,t-1)$ in Lemma \ref{2609151}, we obtain
	$$
	q^{r(r+1)}{k-1\brack t-1} =\sum_{i=0}^{t-1} q^{r(r+1)+i^2}{t-1\brack i}{r\brack i}.
	$$
	During this proof, for $i\geq 0$, write $$A_{i}=q^{r(r+1)+i^2}{t-1\brack i}{r\brack i}, \ \ B_{i}=q^{2i(i+1)}{t-1\brack i}{r+1\brack i+1}{r\brack i+1}{r+1\brack1}^{r-i-1}.$$
	
	Suppose that $k\geq 2t+2$. 
	Counting the members of $\mathcal H(X,M)$ containing $X$ and meeting $M$
	in dimension  $t+1$, we know $h(n,k,t)>q^{r(r-1)}{r+1\brack1}{n-k-1\brack r-1}$.   Taking $(a,b,c)=(t-1,r+1,r-1)$ in Lemma \ref{2609151}, by $n\geq 2k+1$, we obtain
	$$h(n,k,t)>q^{r(r-1)}{r+1\brack1}{k\brack r-1}=\sum_{i=0}^{t-1}q^{r(r-1)+i(i+2)}{r+1\brack 1}{t-1\brack i}{r+1\brack r-i-1}.$$
	For each $0\leq i\leq t-1$, by the identities 
	$$\frac{{r\brack i}}{{r+1\brack r-i-1}}=\frac{{r\brack i}}{{r\brack i+1}}\cdot \frac{{r\brack i+1}}{{r+1\brack i+2}}=\frac{{i+1\brack 1}{i+2\brack 1}}{{r-i\brack 1}{r+1\brack 1}},\ \ \frac{{r+1\brack i+1}}{{r+1\brack r-i-1}}=\frac{{r+1\brack i+1}}{{r+1\brack i+2}}=\frac{{i+2\brack 1}}{{r-i\brack 1}}, $$
	we have  the following equality
	$$\frac{A_{i}+B_{i}}{q^{r(r-1)+i(i+2)}{r+1\brack 1}{t-1\brack i}{r+1\brack r-i-1}}=\frac{q^{2(r-i)}{i+1\brack 1}{i+2\brack 1}}{{r+1\brack 1}^{2}{r-i\brack 1}}+\frac{{i+2\brack 1}{r\brack i+1}{r+1\brack 1}^{r-i-2}}{q^{r^{2}-i^{2}-r}{r-i\brack 1}}.$$
	Applying \eqref{2609191} with $j=r-i$ and summing over $i$,   we obtain $\sum_{i=0}^{t-1}(A_{i}+B_{i})<h(n,k,t)$.

	Suppose that  $k\leq 2t+1$. 
	Counting the members of $\mathcal A(Z)$ whose intersection with $Z$ has dimension $t+1$,
	and taking $(a,b,c)=(t-1,2r,r-1)$ in Lemma \ref{2609151}, we have
	\begin{equation*}
		a(n,k,t)  > q^{r-1}{t+2\brack1}{t+2r-1\brack r-1}
		= \sum_{i=0}^{r-1}q^{r-1+i(r+i+1)}{t+2\brack1}{t-1\brack i}{2r\brack r-i-1}.
	\end{equation*}
	For convenience, let $C_{i}$ be the summand with index $i$ on the
	RHS of the preceding inequality.  
	
	For each $0\leq i\leq r-2$, we have ${t-1\brack i}>0$, and further
	$$
	\frac{{t+2\brack1}(A_i+B_i)}{{r+1\brack1}C_i}
	=
	\frac{q^{r^2-i(r+1)+1}{r\brack i}}{{r+1\brack1}{2r\brack r-i-1}}
	+\frac{{r+1\brack i+1}{r\brack i+1}{r+1\brack1}^{r-i-2}}
	{q^{(r-i)(i+1)-1}{2r\brack r-i-1}}.
	$$
	Setting $j=r-i$ in (\ref{2609192}), we get $(A_i+B_i)/C_i\leq {r+1\brack 1}/ {t+2\brack 1}\leq 1$ for any $0\leq i\leq r-2$.

	The remaining positive terms on the left involve at most
	$A_{r-1},A_{r}$,  and $B_{r-1}$. It is sufficient to show $A_{r-1}+A_{r}+B_{r-1}\leq C_{r-1}+C_{r-2}\left({t+2\brack 1}-{r+1\brack 1}\right)/ {t+2\brack 1}$. 
	Note that 
	$$
	A_{r-1}+A_r
	=q^{r(2r-1)+1}
	\left({r\brack1}{t-1\brack r-1}
	+q^{2r-1}{t-1\brack r}\right)
	\leq q^{r(2r-1)+1}{r\brack1}{t\brack r},
	$$
	where  we used $q^{2r-1}\leq q^{r}{r\brack 1}$. 
	By  ${r\brack1}{t\brack r}={t\brack1}{t-1\brack r-1}$
	and $q^{r+1}{t\brack1}+{r+1\brack1}={k+1\brack1}$, we obtain
	\begin{equation}\label{2609197}
		A_{r-1}+A_r+B_{r-1}
		\leq q^{2r(r-1)}{k+1\brack1}{t-1\brack r-1}.
	\end{equation}
	Since $q^{r-1}{t+2\brack1}={k+1\brack1}-{r-1\brack1}$, we have
	\begin{equation}\label{2609198}
		C_{r-1}
		=q^{2r(r-1)}{k+1\brack1}{t-1\brack r-1}-q^{2r(r-1)}{r-1\brack1}{t-1\brack r-1}.
	\end{equation}
	Using ${r-1\brack1}{t-1\brack r-1}
	={t-r+1\brack1}{t-1\brack r-2}$ and $q^{r+1}{t-r+1\brack1}={t+2\brack1}-{r+1\brack1}
	$,
	we obtain 
	$$
	q^{2r(r-1)}{r-1\brack1}{t-1\brack r-1}
	=\frac{q^{2r-1}{t-r+1\brack1}}{{t+2\brack1}{2r\brack1}}C_{r-2}
	=\frac{q^{r-2}}{{2r\brack1}}
	\left(1-\frac{{r+1\brack1}}{{t+2\brack1}}\right)C_{r-2}\leq \left(1-\frac{{r+1\brack1}}{{t+2\brack1}}\right)C_{r-2}.
	$$	
	It follows from  (\ref{2609197}) and (\ref{2609198}) that  $A_{r-1}+A_{r}+B_{r-1}\leq C_{r-1}+C_{r-2}\left({t+2\brack 1}-{r+1\brack 1}\right)/ {t+2\brack 1}$.  This completes the proof.
\end{proof}

\end{document}